\newtheorem{theorem}{Theorem}[section]
\newtheorem{lemma}[theorem]{Lemma}
\newtheorem{corollary}[theorem]{Corollary}
\newtheorem{definition}[theorem]{Definition}
\newtheorem{hypothesis}[theorem]{Conjecture}
\newcommand{\distrib}[2]{
\left\langle #1,#2\right\rangle
}
\title{3D Compton scattering imaging: study of the spectrum and contour reconstruction}
\author{Ga\"el Rigaud\\
Zentrum f\"ur Technomathematik, University of Bremen, Germany\\\url{gael.rigaud@math.uni-bremen.de}
}
\date{}
\begin{document}

\maketitle

\begin{abstract}
3D Compton scattering imaging is an upcoming concept exploiting the scattering of photons induced by the electronic structure of the object under study. The so-called Compton scattering rules the collision of particles with electrons and describes their energy loss after scattering. Although physically relevant, multiple-order scattering was so far not considered and therefore, only first-order scattering is generally assumed in the literature. The purpose of this work is to argument why and how a contour reconstruction of the electron density map from scattered measurement composed of first- and second-order scattering is possible (scattering of higher orders is here neglected). After the development of integral representations for the first- and second-order scattering, this is achieved by the study of the smoothness properties of associated Fourier integral operators (FIO). The second-order scattered radiation reveals itself to be structurally smoother than the radiation of first-order indicating that the contours of the electron density are essentially encoded within the first-order part. This opens the way to contour-based reconstruction techniques when using multiple scattered data. Our main results, modeling and reconstruction scheme, are successfully implemented on synthetic and Monte-Carlo data. 
\end{abstract}


\section{Introduction}

Computerized Tomography (CT) is a well-established and widely used technique which images an object by exploiting the properties of penetration of the x-rays. Due to the interactions of the photons with the atomic structure, the matter resists to the propagation of the photon beam, denoted by its intensity $I(\mathbf{x},\theta)$ at position $\mathbf{x}$ and in direction $\theta$, following the stationary transport equation
$$
\theta \cdot \nabla_\mathbf{x} I(\mathbf{x},\theta) + \mu_E(\mathbf{x}) I(\mathbf{x},\theta) = 0, \qquad \mathbf{x} \in \Omega \subset \mathbb{R}^3
$$
with $E$ the energy of the beam and $\cdot$ the standard inner product. The resistance of the matter is symbolized by the lineic attenuation coefficient $\mu_E(\mathbf{x})$. Solving this ordinary equation leads to the well-known Beer-Lambert law
$$
I(\mathbf{x}+ t\theta,\theta) = I(\mathbf{x},\theta) \exp\left(-\int_{0}^t \mu_E(\mathbf{x}+s\theta)ds\right) ,
$$
which describes the loss of intensities along the path $\mathbf{x}$ to $\mathbf{x}+t\theta$. The integral above corresponds to the so-called X-ray transforms, denoted here $\mathcal{R}$, which maps the attenuation map $\mu_E(x)$ into its line integrals, \textit{i.e.}
\begin{equation}\label{eq:Radontransform}
\left(\ln \frac{I(\mathbf{s},\theta)}{I(\mathbf{s}+T\theta,\theta)}=\right) \ g(\mathbf{s},\theta) = \mathcal{R} \mu_E (\mathbf{s},\theta) = \int_{0}^T \mu_E(\mathbf{s}+ t \theta) \mathrm{d} t
\end{equation}
with $\mathbf{s}$ the position of the source and $\mathbf{s}+T\theta$ the position of the detector. The task to recover $\mu_E$ from the data $g(\mathbf{s},\theta)$ can be achieved by various techniques such as the filtered back-projection (FBP) in two dimensions \cite{Natterer}.

Since the advent of CT, many imaging concepts have emerged and the need in imaging has grown. One can mention Single Photon Emission CT, Positron Emission Tomography or Cone-Beam CT for the standard system based on an ionising source. In these configurations, the energy has a very limited use but the idea of exploiting it in order to enhance the image quality, optimize the acquisition process or to compensate some limitations (such as limited angle issues) has led to various works \cite{H_73,AM_76,PRLYM_2009,Shefer2013,TM_2015,MLLF_2015,GG_2017,Fredenberg_2018}.

When one focuses on the physics between the matter and the photons, four types of interactions are observed: Thomson-Rayleigh scattering, photoelectric absorption, Compton scattering and pair production. In the classic range of applications of the x-rays or $\gamma$-rays \textit{i.e.} $[20,800]$ keV, the photoelectric absorption and the Compton scattering are the dominant phenomena which leads to a model for the lineic attenuation factor due to Stonestrom et al. \cite{Stonestrom} which writes
\begin{equation}\label{eq:stonestrom}
\mu_E(\mathbf{x}) = E^{-3} \lambda_{PE}(\mathbf{x}) + \sigma(E) n_e(\mathbf{x})
\end{equation} 
where $\lambda_{PE}$ is a factor depending on the materials and symbolizing the photoelectric absorption, $\sigma(E)$ the total-cross section of the Compton effect at energy $E$ and $n_e$ the electron density at $\mathbf{x}$. While the photoelectric absorption plays an important role in the attenuation of the photon beam, a measured photon either suffers no interaction (primary radiation) or is scattered (scattered radiation).

\begin{figure}[t]\centering
\includegraphics[width=0.5\linewidth]{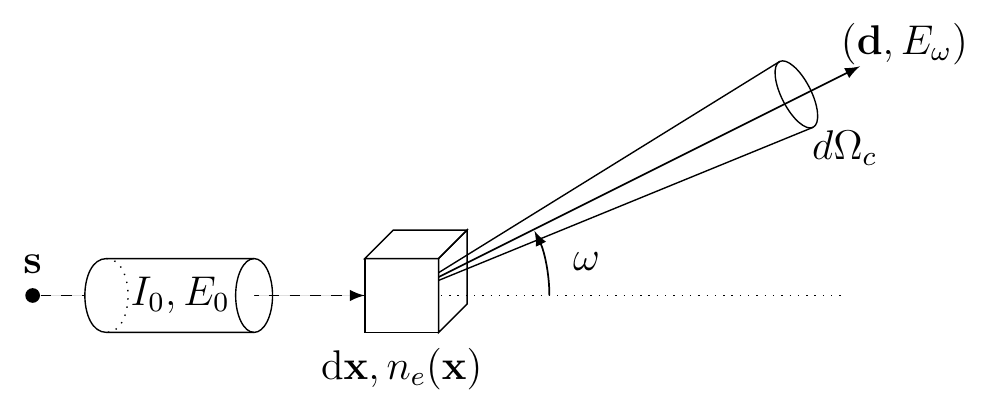}
\caption{Geometry of Compton scattering: the incident photon energy $E_0$ yields a part of its energy to an electron and is scattered with an angle $\omega$. \label{fig:Compton_fig}}
\end{figure}

The Compton effect stands for the collision of a photon with an electron. The photon transfers a part of its energy $E_0$ to the electron which suffers a recoil while the photon is scattered of an (scattering) angle $\omega$ with the axis of propagation, see Fig. \ref{fig:Compton_fig}. The energy of the photon after scattering is expressed by the Compton formula \cite{Compton_23},
\begin{equation}\label{eq:Compton_formula}
E_\omega = \frac{E_0}{1+\frac{E_0}{mc^2}(1-\cos\omega)},
\end{equation} 
where $mc^2 = 511$ keV represents the energy of an electron at rest.

The recent development of cameras able to measure accurately the energy of incoming photons opens the way to an innovative 3D imaging concept, Compton scattering imaging (abbreviated here by CSI). Although the technology of such detectors has not yet reached the same level of maturity as the one used in conventional imaging (CT, SPECT, PET), scientists have proposed and studied in the last decades different bi-dimensional systems, called Compton scattering tomography (CST), see e.g. \cite{Lale_59,Norton_94,Cesareo_02,ABE_11,AHD_90,BC_03,BCGLR_02,CV_73,EMBR_98,FC_71,GKAD_05,HH_2010,Hussein,Arendtsz,Guzzardi,Rigaud_SIIMS_17}. It is also possible to consider interior sources, for instance via the insertion of radiotracers like in SPECT. Then considering collimated detectors, it is possible to model the scattered flux through conical Radon transforms, see for instance \cite{Nguyen2011,Nguyen2017}. However, in this work, we consider only systems with external sources. 

In this paper we assume that the source is monochromatic, \textit{i.e.} it emits photons with same energy $E_0$. For sufficiently large $E_0$, larger than 80keV in medical applications, the Compton effect represents a substantial part of the radiation as more than 70\%  of the emitted radiation is scattered within the whole body. Therefore, the variation in terms of energy due to the Compton scattering, eq. (\ref{eq:Compton_formula}), will produce a polychromatic response to the monochromatic impulse of the source $\mathbf{s}$, see Figure \ref{fig:MultiScattering}. We decompose the spectrum $\mathrm{Spec}(E,\mathbf{d},\mathbf{s})$ measured at a detector $\mathbf{d}$ with energy $E$ as follows
\begin{equation}\label{eq:spectrum}
\mathrm{Spec}(E,\mathbf{d},\mathbf{s}) = \sum_{i=0}^\infty g_i(E,\mathbf{d},\mathbf{s}).
\end{equation}
In this equation, $g_0$ represents the primary radiation which crossed the object without being subject to the Compton effect. It corresponds to the signal measured in CT, eq. (\ref{eq:Radontransform}). The functions $g_i (\mathbf{d},E)$ corresponds to the photons that were measured at $\mathbf{d}$ with incoming energy $E$ after $i$ scattering events.

\begin{figure}[t]\centering
\includegraphics[width=\linewidth]{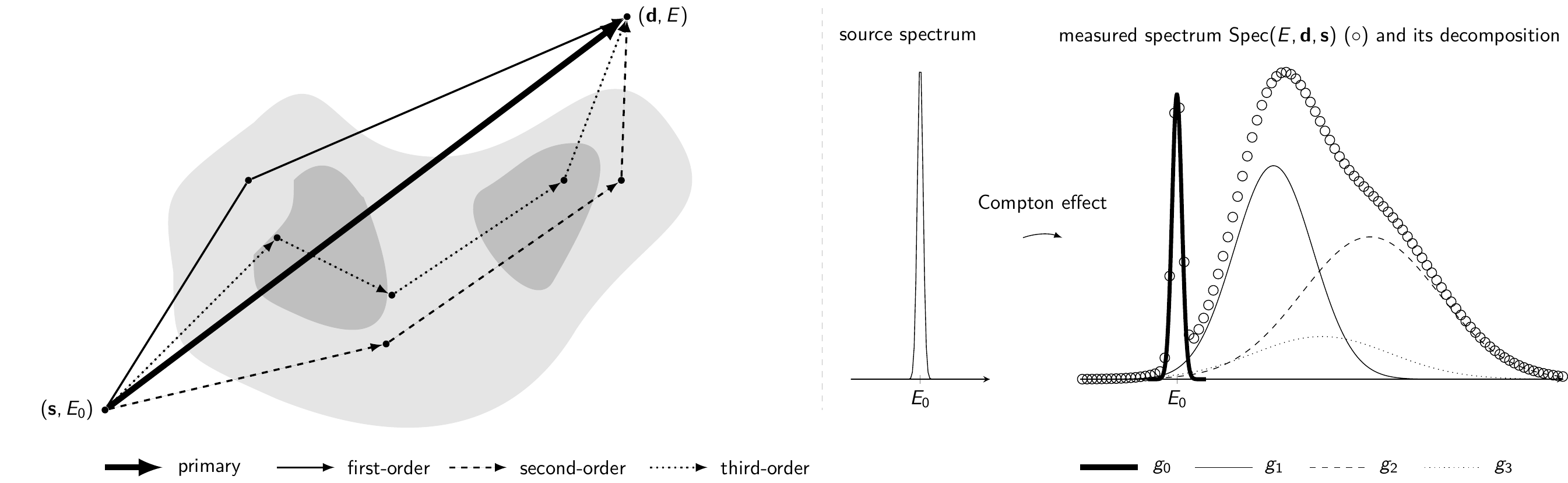}
\caption{Left: Illustration of the multiple scattering -- the detector $\mathbf{d}$ measures photons of energy $E$ that have not been scattered (primary) as well as scattered at different orders (here 1,2 and 3). Right: the spectrum of the detector (o) is illustrated via its decomposition in terms of type of scattered data and depicts the response to a monochromatic source due to the Compton effect. \label{fig:MultiScattering}}
\end{figure}

Until now the research was focused on solving the inverse problem $g_1 = \mathcal{T}(n_e)$ with $\mathcal{T}$ the operator modelling the measured first-order scattering. This approach will be very limited in practice as it neglects a substantial part of the measurement.  The purpose of this paper is to propose a strategy to extract features of the electron density $n_e$ from the spectrum in eq. (\ref{eq:spectrum}). Our main idea arises from studying the smoothness properties of the operators modeling the scattered radiations at various orders. It appears that the first scattering encodes the richest information about the contours while the scattering of higher orders lead to much smoother data. To make our point but also for the sake of implementation, we focus on the first and second scattering only, which means $g_1$ and $g_2$. The first- and second-order scattering represents 90-95\% of the scattered part of the spectrum, this is why $g_3,g_4, \ldots$ are treated here as noise. $g_0$ is also discarded as it brings no information related to the energy even though the primary radiation could be used as rich supplementary information in the future.

The manuscript is organized as follows. In Section \ref{sec:model}, we first recall the modeling for $g_1$ based on a weighted Radon transform along spindle tori given in \cite{RH_2018} and develop an integral representation for $g_2$ in Theorem \ref{theo:model_scat2}. We then validate both representations using a Monte-Carlo simulation for the acquisition process focusing on the first- and second-order scattering. In Section \ref{sec:smooth}, we focus on the study of the smoothness properties of $g_1$ and $g_2$. Both integral representations are difficult to handle due to the non-linearity with respect to the function of interest $n_e$ (electron density map of the object). In order to circumvent this difficulty, we propose to approximate them by linear operators, namely $\mathcal{L}_1$ and $\mathcal{L}_2$. Under some assumptions on the geometry of detectors, we could show they are Fourier integral operators (FIO) of order -1 and $-\frac74$ respectively with the corresponding smoothness properties, see Theorems \ref{theo:FIO_L1} and \ref{theo:FIO_L2}. Furthermore, Lemma \ref{lemma:immersion_sphere} delivers a condition on the detector set so that the corresponding measurement $g_1$ satisfies the semi-global Bolker condition. Under certain assumptions, we can describe the smoothness using Sobolev spaces, see Corollaries \ref{theo:smoothness_L1} and \ref{theo:smoothness_L2}. Since $g_2$ is shown structurally smoother than $g_1$, the singularities of $n_e$ are encoded essentially in $g_1$, this is why a contour-based reconstruction strategy is possible and proposed in Section \ref{sec:simu_MC}  following on from the results in \cite{RH_2018}. This approach has the advantage to get reconstructions of the contours without prior information about $g_2$ or even its modelling. The study of $g_2$ derived in this paper is thus important to understand how to deal with the full spectrum but its modelling does not need to be explicitely taken into account in practice. The global approach is validated by synthetical and Monte-Carlo simulations. 


\subsection{Recalls on Fourier integral operators}

The theory of Fourier integral operators (FIO) is a very powerful tool which allows for instance to characterize the smoothness properties of an FIO. For modalities involving Compton scattering, some cases were already studied in 2D \cite{WS_17} and 3D \cite{WQ_19} using microlocal analysis, however neglecting the weight functions and multiple scattering which are crucial in practice. 

For a vector $\mathbf{x} = (x_1,x_2,\ldots,x_n)$, we consider the following notations for the derivatives, 
$$
\partial_\mathbf{x} f = \sum_{i=1}^n \frac{\partial f}{\partial x_i} \mathrm{d} x_i, \qquad
\nabla_\mathbf{x} f = \left( \frac{\partial f}{\partial x_1}, \frac{\partial f}{\partial x_2}, \ldots, \frac{\partial f}{\partial x_n}    \right).
$$ 
We recall relevant results from \cite{Quinto} and from \cite{Hormander} focusing on the parametrization we consider in this work.
\begin{definition}[\cite{Quinto}]\label{def:FIO}
Let $\mathbb{R}\times \Theta \subset \mathbb{R}^m$, $\mathrm{dim}(\Theta) = m-1$ and $\Omega \subset \mathbb{R}^n$ be open subsets. A real valued function $\Phi \in C^\infty(\mathbb{R}\times \Theta \times \Omega \times \mathbb{R}\setminus\{0\})$ under the form
$$
\Phi(p,\theta,\mathbf{x},\sigma) = \sigma(p - \phi(\mathbf{x},\theta))
$$
with $\phi$ a given level-set function, is called a phase function if 
\begin{itemize}
\item $\Phi$ is positive homogeneous of degree 1 in $\sigma$, \emph{i.e.} $\Phi(p,\theta,\mathbf{x},r\sigma) = r \Phi(p,\theta,\mathbf{x},\sigma)$ for all $r >0$.
\item $(\partial_{p,\theta}\Phi, \partial_\sigma \Phi)$ and $(\partial_{\mathbf{x}} \Phi, \partial_\sigma \Phi)$ do not vanish for all $(p,\theta,\mathbf{x},\sigma) \in \mathbb{R} \times \Theta \times \Omega \times \mathbb{R}\setminus \{0\}$.
\item $\Phi$ is nondegenerate, \textit{i.e.} $\partial_{p,\theta,\mathbf{x}}(\partial_\sigma \Phi) \neq 0$ on the zero-set
$$
\Sigma_\Phi = \{(p,\theta,\mathbf{x},\sigma) \in \mathbb{R} \times \Theta \times \Omega \times \mathbb{R} \setminus \{0\} \ | \ \partial_\sigma \Phi = 0\}.
$$
\end{itemize}
An amplitude $a(\cdot) \in C^\infty(\mathbb{R} \times \Theta \times \Omega \times \mathbb{R})$ is of order $s$ if it satisfies: For every compact set $K\subset \mathbb{R} \times \Theta \times \Omega$ and for every index $\varrho,\rho$ and multi-index $\zeta,\gamma$, there is a constant $C$ such that
\begin{equation}\label{eq:cond_amplitude}
\left\vert \frac{\partial^{\varrho}}{\partial \sigma^\varrho} \frac{\partial^{|\zeta|}}{\partial \mathbf{x}^\zeta} \frac{\partial^{\rho}}{\partial p^\rho} \frac{\partial^{|\gamma|}}{\partial \theta^\gamma} a(p,\theta,\mathbf{x},\sigma) \right\vert \leq C (1+ |\sigma|)^{s-\varrho} \quad \forall \ (p,\theta,\mathbf{x}) \in K \ \text{and} \ \forall \ \sigma \in \mathbb{R}.
\end{equation}
The operator $\mathcal{F}$ defined as 
$$
\mathcal{F} u (y) = \int e^{i \Phi(p,\theta,\mathbf{x},\sigma)} a(p,\theta,\mathbf{x},\sigma) u(\mathbf{x}) \mathrm{d}\mathbf{x} \mathrm{d}\sigma
$$
is then called Fourier integral operator of order $s-\frac{n+m-2}{4}$ and we note $\mathcal{F}~\in~ I^{s-\frac{n+m-2}{4}}(\mathbb{R}\times\Theta,\Omega)$. Also, the canonical relation for $\mathcal{F}$ is defined by
$$
C_\mathcal{F} := \left\{ (p,\theta,\partial_{p,\theta} \Phi(p,\theta,\mathbf{x},\sigma), \mathbf{x},-\partial_\mathbf{x} \Phi(p,\theta,\mathbf{x},\sigma) \ |\ (p,\theta,\mathbf{x},\sigma) \in \Sigma_\Phi) \right\}.
$$ 
\end{definition}
We now provide some results of FIO in the Sobolev spaces.
\begin{definition}[Sobolev spaces]
Let $\Omega$ be an open subset of $\mathbb{R}^n$. The set $H_c^\nu(\Omega)$ is the set of all distributions $u$ with compact support in $\Omega$ such that the norm
$$
\Vert u \Vert_{\nu}^2 = \int_{\xi \in \mathbb{R}^n} |\hat{u}(\xi)|^2 (1+\Vert \xi \Vert^2)^\nu \mathrm{d}\xi, 
$$
with $\Vert \cdot\Vert$ the euclidean norm, is finite with $\hat{u}$ the $n$-dimensional Fourier transform. The set $H_{loc}^\nu(\Omega)$ is the set of all distributions $u$ supported in $\Omega$ 
such that $\chi u \in H_c^\nu(\Omega)$ for all $C^\infty$-smooth functions $\chi$ with compact support in $\Omega$.
\end{definition}

\begin{theorem}[\cite{Hormander}] \label{theo:Hormander}
Let $\mathcal{F}$ be a FIO of order $k$ and assume the natural projection $\Pi_{L} : C_\mathcal{F} \to \mathbb{R}\times \Theta$ is an immersion, \textit{i.e.} with injective derivative. Then, 
$$ 
\mathcal{F} \ : \ H_c^\nu(\Omega) \rightarrow H_{loc}^{\nu-k}(\mathbb{R}\times \Theta)
$$
is continuous.
\end{theorem}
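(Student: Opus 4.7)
The plan is to reduce the continuity statement to the $L^2$-boundedness of order-zero Fourier integral operators, following H\"ormander's classical scheme. First, I would reduce to the case $k=\nu=0$ by pre- and post-composing with the Bessel potentials $\Lambda^r:=(1-\Delta)^{r/2}$, which are elliptic pseudodifferential operators of order $r$ inducing isomorphisms $H^s_c\to H^{s-r}_c$ (and similarly on $H_{loc}$). Setting $\widetilde{\mathcal{F}}:=\Lambda^{\nu-k}\mathcal{F}\Lambda^{-\nu}$ gives an FIO of order $0$ whose canonical relation still coincides with $C_\mathcal{F}$, and the desired continuity of $\mathcal{F}:H^\nu_c(\Omega)\to H^{\nu-k}_{loc}(\mathbb{R}\times\Theta)$ is equivalent to that of $\widetilde{\mathcal{F}}:L^2_c(\Omega)\to L^2_{loc}(\mathbb{R}\times\Theta)$.

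Second, I would apply the standard $T^*T$ argument. The formal adjoint $\widetilde{\mathcal{F}}^*$ is an FIO of order $0$ whose canonical relation is the transpose $(C_\mathcal{F})^t$, so the composition $\widetilde{\mathcal{F}}^*\widetilde{\mathcal{F}}$ has (under a cleanness assumption) canonical relation $(C_\mathcal{F})^t\circ C_\mathcal{F}$. Here is where the hypothesis enters: since $\Pi_L$ is an immersion, the constant rank theorem guarantees it is locally injective, and after a microlocal partition of unity supported on pieces where $\Pi_L$ is actually injective, the composition $(C_\mathcal{F})^t\circ C_\mathcal{F}$ reduces to a subset of the diagonal in $T^*\Omega\times T^*\Omega$. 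Consequently, on each piece $\widetilde{\mathcal{F}}^*\widetilde{\mathcal{F}}$ is, microlocally, a pseudodifferential operator of order $0$ on $\Omega$.

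Finally, order-zero pseudodifferential operators are $L^2$-bounded (for instance by the Calder\'on--Vaillancourt theorem). Combined with the identity $\|\widetilde{\mathcal{F}}u\|_{L^2}^2=\langle \widetilde{\mathcal{F}}^*\widetilde{\mathcal{F}}u,u\rangle$, this yields the $L^2$-bound on $\widetilde{\mathcal{F}}$, from which the stated Sobolev estimate for $\mathcal{F}$ follows by the reduction of Step~1.

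The hardest step is the second one: checking that $(C_\mathcal{F})^t\circ C_\mathcal{F}$ is genuinely contained in the diagonal and that the cleanness conditions needed for the composition of FIOs hold. The immersion hypothesis on $\Pi_L$ only gives this locally, so one must carefully organize the partition of unity, control the residual smoothing parts coming from pieces where $\Pi_L$ is not globally injective, and verify the transversality and non-vanishing conditions away from the zero section to produce genuine pseudodifferential symbols. This microlocal bookkeeping is the technical heart of the argument.
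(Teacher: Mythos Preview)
The paper does not prove this theorem: it is simply quoted from H\"ormander and stated without proof as a background result. So there is no ``paper's own proof'' to compare against.

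That said, your outline is the standard H\"ormander argument and is essentially correct. The reduction to order zero via Bessel potentials, followed by the $T^*T$ argument turning $\widetilde{\mathcal{F}}^*\widetilde{\mathcal{F}}$ into an order-zero pseudodifferential operator under the immersion hypothesis on $\Pi_L$, and finally the appeal to $L^2$-boundedness of such operators, is exactly the route taken in the classical theory. Your identification of the technical heart---verifying clean intersection so that the composition calculus applies and $(C_\mathcal{F})^t\circ C_\mathcal{F}$ lands in the diagonal---is accurate; this is where the immersion condition is genuinely used and where the microlocal partition of unity is needed to pass from local injectivity to a global bound.
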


\begin{theorem}[\cite{Hormander}] \label{theo:Hormander2}
Let be $\mathcal{F}$ a FIO with phase $\Phi$. The natural projection $\Pi_{L}~:~C_\mathcal{F} \to \mathbb{R}\times \Theta$ is an immersion if 
$$
\mathrm{det}(\nabla_x \phi, \partial_{\theta_{1}} \nabla_x \phi,\ldots, \partial_{\theta_{n-1}} \nabla_x \phi) \neq 0 \quad \forall (x,\theta) \in \Omega \times \Theta.
$$
\end{theorem}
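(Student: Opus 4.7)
The plan is to parametrize $C_\mathcal{F}$ explicitly from the graph structure of $\Sigma_\Phi$, push this parametrization through $\Pi_L$, and reduce the immersion property to a determinant condition on the resulting Jacobian that coincides with the hypothesis.

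First, since $\Phi = \sigma(p-\phi(\mathbf{x},\theta))$ gives $\partial_\sigma\Phi = p - \phi(\mathbf{x},\theta)$, the zero-set is the graph $\Sigma_\Phi = \{p = \phi(\mathbf{x},\theta)\}$, so $(\mathbf{x},\theta,\sigma)\in\Omega\times\Theta\times(\mathbb{R}\setminus\{0\})$ provides a global smooth parametrization of $C_\mathcal{F}$. Direct differentiation yields $\partial_p\Phi=\sigma$ and $\partial_{\theta_i}\Phi = -\sigma\,\partial_{\theta_i}\phi$, so in the cotangent coordinates $(p,\theta,\tau,\eta)$ of $T^*(\mathbb{R}\times\Theta)$ the map reads
$$
\Pi_L(\mathbf{x},\theta,\sigma) = \bigl(\phi(\mathbf{x},\theta),\,\theta,\,\sigma,\,-\sigma\,\nabla_\theta\phi(\mathbf{x},\theta)\bigr).
$$

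Next I would assemble the Jacobian $D\Pi_L$ as a block matrix with respect to the ordered columns $(\theta,\sigma,\mathbf{x})$ and rows $(\theta,\tau,p,\eta)$. The $\theta$-block $\partial\theta/\partial\theta = I_{m-1}$ and the $\sigma$-entry $\partial\tau/\partial\sigma = 1$ are trivially of full rank and can be used via elementary row operations to eliminate the entries $(\nabla_\theta\phi)^T$ in the $p$-row and $-\sigma\,D_\theta\nabla_\theta\phi,\,-\nabla_\theta\phi$ in the $\eta$-row, without changing the total rank. What remains in the $\mathbf{x}$-columns is the block
$$
M := \begin{pmatrix} (\nabla_\mathbf{x}\phi)^T \\ -\sigma\,D_\mathbf{x}\nabla_\theta\phi \end{pmatrix},
$$
whose rank must equal $n$ for $D\Pi_L$ to be injective. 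Extracting the square submatrix whose rows are $(\nabla_\mathbf{x}\phi)^T$ together with $-\sigma\,(\partial_{\theta_i}\nabla_\mathbf{x}\phi)^T$ for $i=1,\ldots,n-1$, its determinant equals
$$
(-\sigma)^{n-1}\,\det\bigl(\nabla_\mathbf{x}\phi,\,\partial_{\theta_1}\nabla_\mathbf{x}\phi,\,\ldots,\,\partial_{\theta_{n-1}}\nabla_\mathbf{x}\phi\bigr),
$$
which is non-zero since $\sigma\neq 0$ on $\Sigma_\Phi$ and by the hypothesis of the theorem. Hence $D\Pi_L$ attains the maximal rank $n+m = \dim C_\mathcal{F}$, which is precisely the immersion property.

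Conceptually the argument is pure linear algebra once the parametrization of $C_\mathcal{F}$ is in place; the only step that requires genuine care is the bookkeeping of row/column orderings and the minus signs in $\partial_{p,\theta}\Phi$ and $\partial_\mathbf{x}\Phi$, so that the Schur-type reduction leaves exactly the minor appearing in the statement. I do not anticipate any substantive obstacle beyond this.
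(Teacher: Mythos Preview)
Your argument is sound: parametrizing $\Sigma_\Phi$ by $(\mathbf{x},\theta,\sigma)$, writing $\Pi_L$ explicitly, and then performing the block reduction on $D\Pi_L$ is the natural way to arrive at the determinant condition, and your bookkeeping is correct. Note, however, that the paper does not prove this statement at all --- it is quoted as a known result from H\"ormander and used as a black box (specifically in Lemma~\ref{lemma:immersion_sphere} and Corollary~\ref{theo:smoothness_L2}) --- so there is no in-paper proof to compare your proposal against. One minor remark: your extraction of an $n\times n$ minor tacitly uses $m\geq n$, so that $\Theta$ has at least $n-1$ coordinates $\theta_1,\ldots,\theta_{n-1}$; in the paper's setting $m=n=3$, so $M$ is already square and no submatrix selection is needed.
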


We also need the following property from integral geometry throughout the article. 
\begin{theorem}[\cite{Palamodov}]\label{theo:integration_dirac}
Denoting by $\delta$ the Dirac delta distribution, it holds
$$
\int_\Omega \Vert\nabla_\mathbf{x} \gamma(\mathbf{x})\Vert \delta(\gamma(\mathbf{x})) f(\mathbf{x}) \mathrm{d}\mathbf{x} = \int_{\gamma^{-1}(0)} f(\mathbf{x}) \mathrm{d}\pi(\mathbf{x}) 
$$
with $\gamma:\mathbb{R}^n \to \mathbb{R}$ a continuously differentiable function with non-vanishing gradient, $\gamma^{-1}(0)$ the $(n-1)$-dimensional surface defined by $\gamma(\mathbf{x}) = 0$ and $\pi(\mathbf{x})$ the corresponding measure.
\end{theorem}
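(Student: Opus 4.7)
The plan is to reduce the identity to a local change of variables adapted to the level set $\gamma^{-1}(0)$, and then let the Dirac delta act in the direction transverse to that level set. Since $\nabla\gamma$ does not vanish, the implicit function theorem guarantees that $\gamma^{-1}(0)$ is a smooth $(n-1)$-dimensional submanifold of $\Omega$, and we can cover a neighborhood of it by finitely many charts on which $\gamma$ has a non-vanishing partial derivative with respect to some coordinate; a smooth partition of unity subordinate to such a covering reduces the problem to proving the identity when $f$ is supported in one chart.

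Inside a chart where, say, $\partial_{x_n}\gamma\neq 0$, the plan is to introduce new coordinates $\mathbf{y}=(y_1,\ldots,y_{n-1},y_n)$ by $y_i=x_i$ for $i<n$ and $y_n=\gamma(\mathbf{x})$. By the inverse function theorem this defines a diffeomorphism whose Jacobian determinant equals $1/|\partial_{x_n}\gamma|$. In these coordinates the integrand becomes
$$
\|\nabla_\mathbf{x}\gamma\|\,\delta(y_n)\,f(\psi(\mathbf{y}))\cdot\frac{1}{|\partial_{x_n}\gamma|},
$$
so integrating out the delta in $y_n$ reduces the left-hand side to an $(n-1)$-dimensional integral over $\{y_n=0\}$ with density $\|\nabla_\mathbf{x}\gamma\|/|\partial_{x_n}\gamma|$ evaluated on $\gamma^{-1}(0)$.

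The last step is to match this density with the surface measure $\mathrm{d}\pi$. On the chart, $\gamma^{-1}(0)$ is the graph $x_n=\eta(y_1,\ldots,y_{n-1})$ where $\eta$ is implicitly defined by $\gamma(y_1,\ldots,y_{n-1},\eta)=0$. Differentiating implicitly gives $\partial_{y_i}\eta=-\partial_{x_i}\gamma/\partial_{x_n}\gamma$, and therefore
$$
\sqrt{1+\|\nabla_{y'}\eta\|^{2}}=\frac{\sqrt{(\partial_{x_n}\gamma)^{2}+\sum_{i<n}(\partial_{x_i}\gamma)^{2}}}{|\partial_{x_n}\gamma|}=\frac{\|\nabla_\mathbf{x}\gamma\|}{|\partial_{x_n}\gamma|},
$$
which is exactly the density obtained above. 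Hence both sides agree on each chart, and summing over the partition of unity yields the theorem.

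The only genuine obstacle is handling the delta rigorously, since the formal manipulation with $\delta(\gamma(\mathbf{x}))$ must be interpreted distributionally; the cleanest fix is to approximate $\delta$ by a mollifying sequence $\varphi_\varepsilon\to\delta$, prove the identity for $\varphi_\varepsilon$ in place of $\delta$ using the standard change of variables above (which then becomes a classical Fubini-type computation underlying the co-area formula), and finally pass to the limit $\varepsilon\to 0$ using continuity of the surface integral in the level value, which holds because $f$ is continuous with compact support in the chart and $\gamma$ is $C^{1}$ with non-vanishing gradient.
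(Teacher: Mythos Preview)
The paper does not prove this theorem; it is quoted as a known result from integral geometry with a reference to Palamodov, so there is no ``paper's own proof'' to compare against. Your argument is correct and is essentially the standard derivation: localize via the implicit function theorem and a partition of unity, straighten $\gamma$ by taking $(x_1,\ldots,x_{n-1},\gamma(\mathbf{x}))$ as new coordinates, let the delta act in the transverse variable, and then identify the leftover density $\|\nabla\gamma\|/|\partial_{x_n}\gamma|$ with the graph surface element $\sqrt{1+\|\nabla\eta\|^{2}}$. The mollifier justification you sketch is the right way to make the distributional step rigorous. One small point: your ``finitely many charts'' tacitly uses compactness of $\gamma^{-1}(0)\cap\mathrm{supp}\,f$, so you are implicitly assuming $f$ is compactly supported (or at least that the intersection with the level set is compact); this is consistent with how the result is applied in the paper but is worth stating explicitly.
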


\section{On modeling the spectrum}\label{sec:model}

In this section, we provide integral representations for the spectrum $\mathrm{Spec}(\mathbf{d},E)$. As said above, we focus on $g_1$ and $g_2$ while we treat $g_3, g_4, \ldots$ as noise $\eta$, \emph{i.e.}
$$
\mathrm{Spec}(E,\mathbf{d},\mathbf{s}) = (g_1 + g_2 + \eta)(E,\mathbf{d},\mathbf{s}).
$$
The first-order scattered radiation $g_1$ was already studied in the literature, see \cite{RH_2018}. The next subsection summarizes the modeling of $g_1$ by a toric Radon transform.

\subsection{First-order scattering and toric Radon transforms}


We consider a photon beam traveling from $\mathbf{m}$ to $\mathbf{n}$. The interactions of the photon beam with the matter leads its intensity to reduce exponentionally due to the resistance of the matter to the propagation of light, the attenuation factor or Beer-Lambert law, and proportionally to the square of the travelled distance, the photometric dispersion. This latter comes from the propagation of photons not on straight lines but within solid angles with vertex being the emission or scattering point, see also Figure \ref{fig:Compton_fig}.
To represent these two physical factors, we define the following mapping of the attenuation map $\mu_E$ 
$$
A_{E}(\textbf{m} , \textbf{n}) := \Vert \mathbf{n}-\mathbf{m} \Vert^{-2} \exp\left( - \Vert \mathbf{n}-\mathbf{m} \Vert \int_0^1  \mu_E \left(\mathbf{m} + t (\mathbf{n}-\mathbf{m})\right) \mathrm{d}t \right).
$$
Using this notation and considering an ionising source $\mathbf{s}$ with energy $E_0$ and photon beam intensity $I_0$, the variation of the number of photons $N_c$ scattered at $\mathbf{x}$ and detected at $\mathbf{d}$ with energy $E_\omega$, see \cite{Driol}, can be expressed as
\begin{equation}\label{eq:nb_photon_Compton}
 \mathrm{d} N_c(\mathbf{x}, \mathbf{d}, \mathbf{s}) = \frac{I_0 r_e^2}{4} P(\omega) A_{E_0}(\mathbf{s} , \mathbf{x}) A_{E_\omega}(\mathbf{x}, \mathbf{d}) n_e(\mathbf{x}) \mathrm{d}\mathbf{x},
\end{equation} 
where $P(\omega)$ stands for the Klein-Nishina probability \cite{KN}, and $r_e$ is the classical radius of an electron. This formula describes the evolution of the first scattered radiation which is detected at a given energy and at a given detector position.

\begin{figure}[t]\centering
\includegraphics[width=0.5\linewidth]{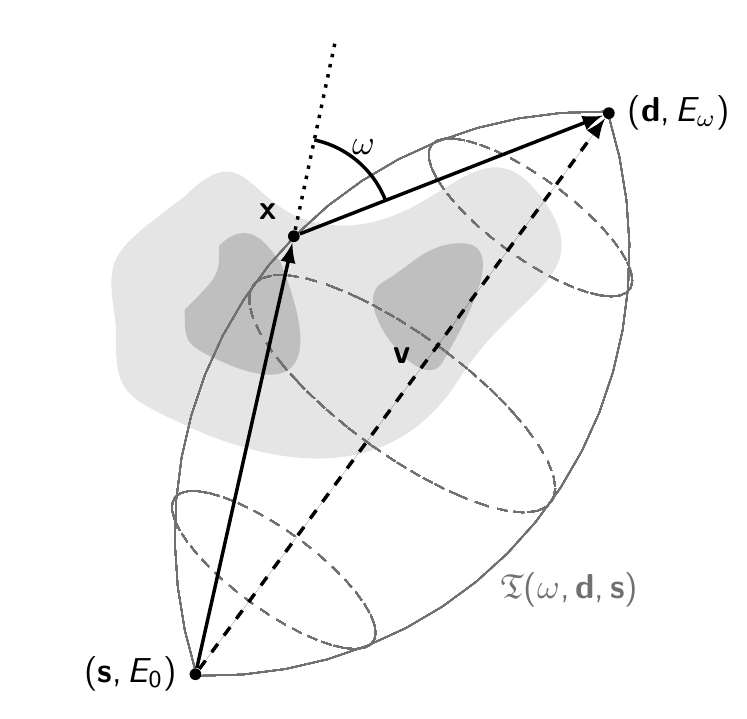}
\caption{Geometric representation of the torus $\mathfrak{T}(\omega,\mathbf{s},\mathbf{d})$ oriented by $\mathbf{v} = \mathbf{d}-\mathbf{s}$ and scaled by the scattering angle $\omega$ with $\sphericalangle (\mathbf{x}-\mathbf{s},\mathbf{d} - \mathbf{x}) = \pi - \omega$ for all $\mathbf{x} \in \mathfrak{T}(\omega,\mathbf{s},\mathbf{d})$. The plain arrows symbolize illustrations of the flight of a measured scattered photon. \label{fig:Geometry_CSI_1}}
\end{figure}

Due to the Compton formula eq. (\ref{eq:Compton_formula}), the scattered energy $E_\omega$ corresponds to a unique scattering angle $\omega$ and thus delivers a specific geometry when focusing on the first scattering. Indeed, all scattering points $\mathbf{x}$ responsible for a detected scattered photon at energy $E_\omega$ belong to
$$
\mathfrak{T}(\omega,\mathbf{d},\mathbf{s}) = \left\{ \mathbf{x} \in \mathbb{R}^3 \ : \ \sphericalangle (\mathbf{x}-\mathbf{s},\mathbf{d} - \mathbf{x}) = \pi - \omega \right\}
$$
with $\sphericalangle (\cdot,\cdot)$ the angle between two vectors. As depicted by Figure \ref{fig:Geometry_CSI_1}, this set corresponds to a part of a spindle torus, see \cite{Wolfram}. More precisely, $\mathfrak{T}(\omega,\mathbf{d},\mathbf{s})$ denotes the \textit{lemon} part of the spindle torus for $\omega \in [0,\frac{\pi}{2}]$ and to its \textit{apple} part for $\omega \in [\frac{\pi}{2},\pi]$. Assuming now $\mathbf{d}$ to be a \textit{point} detector and integrating over $\Omega := \mathrm{dom}(n_e)$ the equation (\ref{eq:nb_photon_Compton}), one obtains an integral representation of the detected first scattered radiation, \textit{i.e.} 
$$
g_1(E_\omega,\mathbf{d},\mathbf{s}) = N_c(E_\omega,\mathbf{d},\mathbf{s}) \propto \int_{\mathbf{x}\in\mathfrak{T}(\omega,\mathbf{d},\mathbf{s})}  A_{E_0}(\textbf{s} , \textbf{x}) A_{E_\omega}(\textbf{x} , \textbf{d}) n_e(\textbf{x}) \mathrm{d}\mathbf{x}.
$$

As proven in \cite{RH_2018}, the quantities $N_c(\mathbf{s},\mathbf{d},E_\omega)$ can be interpreted as a weighted toric Radon transform. Using the properties of integration of the dirac distribution given in Theorem \ref{theo:integration_dirac}, it follows that the first scattering radiation, $g_1(E_\omega,\mathbf{d},\mathbf{s})$, can be modelled after suited change of variables by
\begin{equation}\label{eq:FIO_g1}
\mathcal{T}(n_e)(p,\mathbf{d},\mathbf{s}) := \int_\Omega \Vert\nabla_\mathbf{x} \phi(\mathbf{x},\mathbf{d},\mathbf{s})\Vert \; w_1(n_e)(\mathbf{x},\mathbf{d},\mathbf{s}) \; n_e(\mathbf{x}) \; \delta(p - \phi(\mathbf{x},\mathbf{d},\mathbf{s})) \; \mathrm{d} \mathbf{x}
\end{equation}
with $w_1(n_e) = A_{E_0} \cdot A_{E_\omega}$, $p = \cot \omega$, where
$\cot : (0,\pi) \mapsto \mathbb{R} $
and the characteristic function given by
\begin{equation}\label{eq:phase_phi}
\phi(\mathbf{x},\mathbf{d},\mathbf{s})  = \frac{\kappa(\mathbf{x},\mathbf{d},\mathbf{s}) - \rho(\mathbf{x},\mathbf{d},\mathbf{s})}{\sqrt{1-\kappa^2(\mathbf{x},\mathbf{d},\mathbf{s})}}
\end{equation}
where
\begin{equation}\label{eq:def_kappa_rho}
\kappa(\mathbf{x},\mathbf{d},\mathbf{s}) = \frac{(\mathbf{x}-\mathbf{s})}{\Vert \mathbf{x}-\mathbf{s} \Vert } \cdot \frac{(\mathbf{d}-\mathbf{s})}{\Vert \mathbf{d}-\mathbf{s} \Vert } 
\quad \text{and} \quad 
\rho(\mathbf{x},\mathbf{d},\mathbf{s}) = \frac{\Vert \mathbf{x}-\mathbf{s} \Vert}{\Vert \mathbf{d}-\mathbf{s} \Vert}.
\end{equation}
In the following, we will write $\kappa(\mathbf{x})$ and $\rho(\mathbf{x})$ instead for the sake of simplicity. \\[1em]

\textbf{Remark:} Depending on the resolution of the detector, the spectrum can be parametrized either by the energy $E_\omega$, the scattering angle $\omega$ or $p=\cot \omega$. These three parameters are analytically equivalent on the considered range of energy, $(E_0,E_\pi)$, as they are related by diffeomorphisms with non-vanishing derivatives. For the study of the operator $\mathcal{T}$ we chose the parameter $p$ over the energy or scattering angle without loss of generality and for the sake of a simpler phase $\phi$.\\

\subsection{Modeling the second order scattered radiation}

We derive now an integral representation of the second-order scattered radiation, noted $g_2$. The higher-orders are difficult to handle but we expect a similar approach for the orders larger than 2, see Conjecture \ref{hypo}.

The derivation of the second scattering will use the modelling of the first scattering seen above. In this case, a measured photon is scattered twice instead of once before being detected. We denote by $\mathbf{x}$ and $\mathbf{y}$ the first and second scattering points respectively and by $\omega_1$ and $\omega_2$ the first and second scattering angles respectively. The keyidea is to interpret each first scattering point $\mathbf{x}$ as a new polychromatic \textit{source} regarding the following second scattering points. 

Let us consider a detector $\mathbf{d}$ and a detected energy $E$. Due to the Compton formula, the scattering angles must satisfy 
\begin{equation}\label{eq:Relation_cos_Energy}
\cos \omega_1 + \cos \omega_2 = 2 - mc^2 \left( \frac{1}{E} - \frac{1}{E_0} \right) =: \lambda(E) \in (0,2).
\end{equation}
The boundaries 0 and 2 are here excluded since they correspond to the degenerated cases - primary ($\omega_1 = \omega_2 = 0$) and backscattered ($\omega_1 = \pi$ and $\omega_2 = 0$) radiation - of the torus. In consequence, the second scattering angle can be expressed as a function of the first one:
$$
\omega_2(\omega_1) = \arccos\left(  \lambda(E) -\cos \omega_1 \right).
$$
The first angle $\omega_1$ means that a photon arriving at $\mathbf{x}$ with energy $E_0$ is scattered by an angle $\omega_1$ and has afterwards an energy $E_{\omega_1}$. Such photons belong then to the cone of aperture $\omega_1$, vertex $\mathbf{x}$ and direction $\mathbf{x} - \mathbf{s}$, i.e. to the cone
$$
\mathfrak{C}(\omega_1,\mathbf{x},\mathbf{s}) := \left\{ \mathbf{y} \in \mathbb{R}^3 \ : \ \psi(\mathbf{y},\mathbf{x},\mathbf{s}) = \cos \omega_1  \right\}
$$
with characteristic function
\begin{equation}\label{eq:phase_cone}
\psi(\mathbf{y},\mathbf{x},\mathbf{s}) = \frac{\mathbf{y}-\mathbf{x}}{\Vert \mathbf{y}-\mathbf{x} \Vert} \cdot \frac{\mathbf{x}-\mathbf{s}}{\Vert \mathbf{x}-\mathbf{s}\Vert}.
\end{equation}
$\psi$ leads to a standard representation of the cone, \textit{i.e.}
$$
\mathfrak{C}(\omega_1,\mathbf{x},\mathbf{s}) = \left\{ \mathbf{x} + \frac{t}{\cos \omega_1} R_{1} 
\left(
\begin{array}{c}
\sin \omega_1 \cos \varphi \\ 
\sin \omega_1 \sin \varphi \\ 
\cos \omega_1 
\end{array}\right) 
\ : \ t \in \mathbb{R}^+, \varphi \in [0,2\pi) \right\}
$$
with $R_1$ the rotation matrix which maps 
$
\frac{\mathbf{d} - \mathbf{x}}{\Vert \mathbf{d} - \mathbf{x} \Vert} 
$
onto
$
\frac{\mathbf{x} - \mathbf{s}}{\Vert \mathbf{x} - \mathbf{s} \Vert}
$. Such a matrix can be computed using the Rodrigues formula.

\begin{figure}[t]\centering
\includegraphics[width=0.75\linewidth]{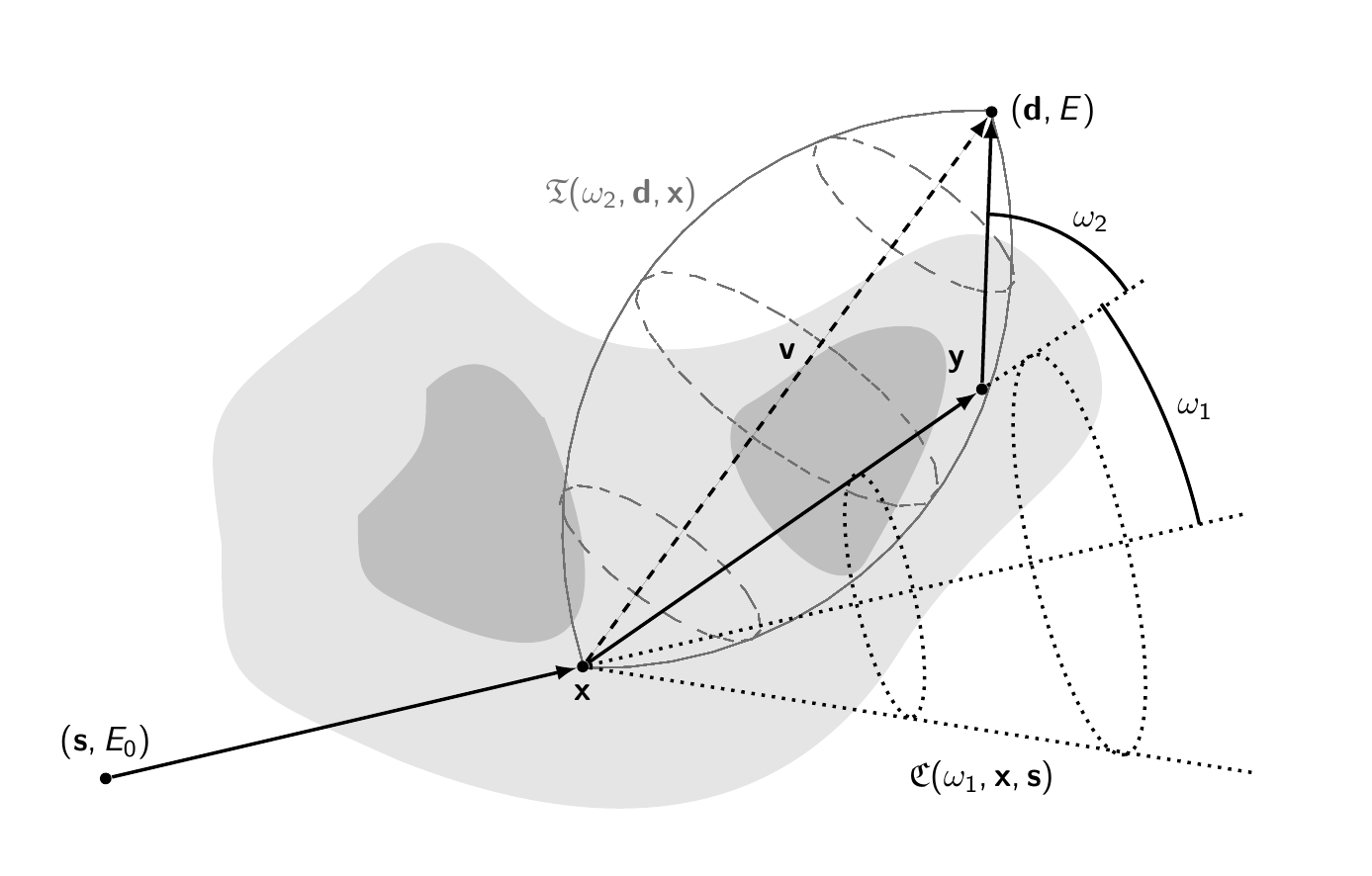}
\caption{Geometry of the second-order scattering: a first scattering point M becomes a new source regarding the next scattering event and emits photons with energy $E_{\omega_1}$ along corresponding cones $\mathfrak{C}(\omega_1,\mathbf{x})$. Beside the next scattering event occurs with an angle $\omega_2$ leading to a scattering site N belonging to the corresponding spindle torus $\mathfrak{T}(\omega_2,\mathbf{x},\mathbf{d})$. Thence, N belongs to the intersection between the cone and  the spindle torus. \label{fig:Geometry_CSI_2}}
\end{figure}

The second angle $w_2$ means that a photon arriving at $\mathbf{y}$ with energy $E_{\omega_1}$ is scattered by an angle $\omega_2$ and is afterwards detected at $\mathbf{d}$ with an energy $E$. As seen in the previous section, such photons belong to the torus with fixed points $\mathbf{x}$ and $\mathbf{d}$, $\mathfrak{T}(\omega_2,\mathbf{x},\mathbf{d})$, such that
$$
\cot \omega_2(\omega_1) = \phi(\mathbf{y},\mathbf{d},\mathbf{x})
$$
with $\phi$ defined in eq. (\ref{eq:phase_phi}). A parametric representation of the spindle torus is given in \cite{RH_2018} and writes
\begin{equation}\label{eq:def_torus_scat1}
\mathfrak{T}(\omega_2,\mathbf{d},\mathbf{x}) = 
\left\{
\mathbf{x} + \Vert \mathbf{d}-\mathbf{x}\Vert \frac{\sin(\omega_2 - \alpha)}{\sin \omega_2} R_2
\left(
\hspace{-4pt}
\begin{array}{c}
\sin \alpha \cos \beta \\ 
\sin \alpha \sin \beta \\ 
\cos \alpha 
\end{array}
\hspace{-4pt}
\right) 
: \alpha \in [0,\omega_2], \beta \in [0,2\pi)
\right\}
\end{equation}
with $R_2$ the rotation matrix which maps 
$
e_z = (0,0,1)^T
$
onto
$
\frac{\mathbf{d} - \mathbf{x}}{\Vert \mathbf{d} - \mathbf{x} \Vert}
$.  Regarding the vertex $\mathbf{x}$, the torus corresponds to a simple unit vector multiplied by the radius
$$
r(\omega_2,\alpha) = \Vert \mathbf{d}-\mathbf{x}\Vert \left( \cos \alpha - \frac{\sqrt{1-\cos^2 \alpha}}{\tan \omega_2} \right).
$$

Since the new \textit{source} $\mathbf{x}$ emits photons with various energies in the corresponding cone, the Compton formula and the relationship $\omega_2(\omega_1)$ implies that a photon detected at $\mathbf{d}$ with energy $E$ and scattered at $\mathbf{x}$ with angle $\omega_1$ must belong to the intersection
$$
\mathbf{y} \in \mathfrak{T}(\omega_2(\omega_1),\mathbf{d},\mathbf{x}) \cap \mathfrak{C}(\omega_1,\mathbf{x},\mathbf{s}). 
$$
This intersection and the principle described above is depicted in Figure \ref{fig:Geometry_CSI_2}. 

We want to provide an explicit characterization of the intersection. Therefore, to simplify the analysis, we consider the torus to be oriented in the direction $e_z$, which is achieved by applying the matrix $R_2^{-1}$. In this setting, $\cos \alpha$ corresponds to the third component of the normalized vector. Since we are interested in the intersection of the cone and of the torus, one gets
$$
\cos \alpha = R_1(3,1) \sin \omega_1 \cos \varphi
+ R_1(3,2) \sin \omega_1 \sin \varphi + R_1(3,3) \cos \omega_1 =: z_\cap
$$ 
with $R_1(3,\cdot)$ the third row of the rotation matrix $R_1$. Using the parametrisation of the cone, one gets for the intersection, the following radius 
\begin{equation}\label{eq:radius_intersection}
r_\cap := \Vert \mathbf{d}-\mathbf{x}\Vert \left( z_\cap - \frac{\sqrt{1-z_\cap^2}}{\tan \omega_2} \right)
\end{equation}
and thus
\begin{equation}\label{eq:coordinates_intersection}
\mathbf{y}_\cap = \mathbf{x} + r_\cap \ R_2 R_1 \left(
\begin{array}{c}
\sin \omega_1 \cos \varphi \\ 
\sin \omega_1 \sin \varphi \\ 
\cos \omega_1 
\end{array}\right)  \qquad \text{if } r_\cap > 0.
\end{equation}
Since the torus is oriented ($\mathbf{x}$ to $\mathbf{d}$), we can discard the intersection between the cone and the opposite torus which corresponds to negative radii in order to fit the physics. In practice, we can ignore them since they lead to detected energies outside the considered range. Also, the case $r_\cap = 0$ is discarded as it would correspond to two successive scattering occuring at the exact same location. We have now the tools to give an integral representation of $g_2$.
\begin{theorem}\label{theo:model_scat2}
Considering an electron density function $n_e(\mathbf{x})$ with compact support $\Omega$, a monochromatic source $\mathbf{s}$ with energy $E_0$ as well as a detector $\mathbf{d}$ both located outside $\Omega$. Then, the number of detected photons scattered twice arriving with an energy $E$ are given by
\begin{equation}\label{eq:FIO_g2}
 g_2(E,\mathbf{d},\mathbf{s}) = \mathcal{S}(n_e)(E,\mathbf{d},\mathbf{s})  = \int_\Omega \int_0^{2\pi} \hspace{-5pt} \int_0^\pi
w_2(\mathbf{y}_\cap,\mathbf{x}; \omega_1,\varphi, \mathbf{d},\mathbf{s})
n_e(\mathbf{x}) n_e(y_\cap) \mathrm{d}S_\cap(\omega_1,\varphi) \mathrm{d}\mathbf{x}
\end{equation}
with the physical factors symbolized by
$$
w_2(\mathbf{y}_\cap,\mathbf{x}; \omega_1,\varphi, \mathbf{d},\mathbf{s}) = 
 A_{E_0}(\mathbf{s},\mathbf{x}) A_{E_{\omega_1}}(\mathbf{x},\mathbf{y}_\cap) A_{E_{\omega_2}}(\mathbf{y}_\cap,\mathbf{d})
$$
and the differential form of the intersection given by
$$
\mathrm{d}S_\cap(\omega_1,\varphi) = r_\cap  \sqrt{\sin^2\omega_1 (r_\cap^2 +  \left( \partial_{\omega_1} r_\cap  \right)^2) + \left( \partial_{\varphi} r_\cap \right)^2} \  \mathrm{d}\omega_1 \mathrm{d}\varphi
$$
in which $\mathbf{y}_\cap$ and $r_\cap$ are described in eqs (\ref{eq:coordinates_intersection}) and (\ref{eq:radius_intersection}).
\end{theorem}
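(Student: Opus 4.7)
The plan is to bootstrap the first-order formula (\ref{eq:nb_photon_Compton}) twice. For a fixed final detected energy $E$, I would regard the first scattering site $\mathbf{x}$ as a new (now polychromatic) source. Applying (\ref{eq:nb_photon_Compton}) once from $\mathbf{s}$ to $\mathbf{x}$ accounts for the photons of energy $E_0$ that reach $\mathbf{x}$ and undergo a Compton scattering there with angle $\omega_1$, contributing the factor $A_{E_0}(\mathbf{s},\mathbf{x}) n_e(\mathbf{x})\,\mathrm{d}\mathbf{x}$. Applying it a second time from $\mathbf{x}$ to the second scattering site $\mathbf{y}_\cap$ models the propagation with post-scattering energy $E_{\omega_1}$ and a further Compton event at $\mathbf{y}_\cap$ with angle $\omega_2$ directed toward $\mathbf{d}$, contributing $A_{E_{\omega_1}}(\mathbf{x},\mathbf{y}_\cap) A_{E_{\omega_2}}(\mathbf{y}_\cap,\mathbf{d}) n_e(\mathbf{y}_\cap)$. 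Together with the Klein--Nishina probabilities absorbed into the weight, this delivers the integrand $w_2\, n_e(\mathbf{x}) n_e(\mathbf{y}_\cap)$.

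Next I would impose the kinematic constraint. The relation (\ref{eq:Relation_cos_Energy}), obtained by composing two Compton formulas, ties the two scattering angles through $\omega_2 = \omega_2(\omega_1)$, so for a fixed $\mathbf{x}$ and varying $\omega_1$ the admissible second scatterers lie on $\mathfrak{C}(\omega_1,\mathbf{x},\mathbf{s}) \cap \mathfrak{T}(\omega_2(\omega_1),\mathbf{d},\mathbf{x})$. For each $\omega_1$ this intersection is a 1D curve on the cone parametrized by $\varphi$; sweeping $\omega_1$ across $(0,\pi)$ traces the 2D locus of admissible $\mathbf{y}_\cap$ naturally parametrized by $(\omega_1,\varphi)$, which is exactly (\ref{eq:coordinates_intersection}). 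Substituting the cone parametrization into the torus equation written in the frame $R_2^{-1}$ gives $\cos\alpha = z_\cap$ and hence the radius $r_\cap$ of (\ref{eq:radius_intersection}).

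The main technical step, and the principal obstacle, is computing the area element $\|\partial_{\omega_1}\mathbf{y}_\cap \times \partial_\varphi \mathbf{y}_\cap\|$ on this 2D surface. Writing $\mathbf{y}_\cap = \mathbf{x} + r_\cap(\omega_1,\varphi)\,\mathbf{v}(\omega_1,\varphi)$ with $\mathbf{v} = R_2 R_1 (\sin\omega_1\cos\varphi,\sin\omega_1\sin\varphi,\cos\omega_1)^T$, the key observation is that $R_2 R_1$ is a rotation, so the frame $\{\mathbf{v},\ \partial_{\omega_1}\mathbf{v},\ (\sin\omega_1)^{-1}\partial_\varphi \mathbf{v}\}$ is orthonormal and right-handed, inheriting from a direct computation in the standard basis the identity $\mathbf{v}\times \partial_{\omega_1}\mathbf{v} = (\sin\omega_1)^{-1}\partial_\varphi\mathbf{v}$. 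Expanding
\begin{equation*}
(\partial_{\omega_1} r_\cap\,\mathbf{v} + r_\cap\partial_{\omega_1}\mathbf{v}) \times (\partial_\varphi r_\cap\,\mathbf{v} + r_\cap\partial_\varphi\mathbf{v})
\end{equation*}
along this frame produces three pairwise orthogonal components of magnitudes $r_\cap\sin\omega_1\,|\partial_{\omega_1} r_\cap|$, $r_\cap\,|\partial_\varphi r_\cap|$ and $r_\cap^2\sin\omega_1$; taking their Pythagorean sum reproduces $r_\cap\sqrt{\sin^2\omega_1(r_\cap^2+(\partial_{\omega_1} r_\cap)^2) + (\partial_\varphi r_\cap)^2}$. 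The delicate bookkeeping of the $\sin\omega_1$ factors coming from $\partial_\varphi\mathbf{v}$ and of the signs arising in the cross products is where a careless computation would fail, and it is the only place where the rotation invariance genuinely enters.

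Finally, I would assemble the pieces: multiply the two successive scattering contributions, insert the surface measure $\mathrm{d}S_\cap$ on the cone--torus intersection, and integrate $\mathbf{x}$ over $\Omega$ jointly with $(\omega_1,\varphi)\in(0,\pi)\times[0,2\pi)$. This yields (\ref{eq:FIO_g2}) with the stated $w_2$ and $\mathrm{d}S_\cap$. The degenerate cases $r_\cap = 0$ (coincident scatterings) and $r_\cap<0$ (the opposite torus leaf, which corresponds to energies outside the admissible range) are discarded as already argued before the statement, so the integration domain is unambiguous; the hypothesis that $\mathbf{s}$ and $\mathbf{d}$ lie outside $\Omega$ ensures that the attenuation factors in $w_2$ are finite on the support of the integrand.
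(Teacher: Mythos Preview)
Your proposal is correct and follows essentially the same route as the paper: bootstrap eq.~(\ref{eq:nb_photon_Compton}) to obtain the double-scattering integrand, restrict $\mathbf{y}$ to the cone--torus intersection parametrized by $(\omega_1,\varphi)$, and compute the surface element as $\|\partial_{\omega_1}\mathbf{y}_\cap\times\partial_\varphi\mathbf{y}_\cap\|$ using rotation invariance. The only cosmetic difference is that the paper writes out $\partial_{\omega_1}\mathbf{y}_\cap$, $\partial_\varphi\mathbf{y}_\cap$ and their cross product component by component, whereas you organize the same calculation through the orthonormal spherical frame $\{\mathbf{v},\partial_{\omega_1}\mathbf{v},(\sin\omega_1)^{-1}\partial_\varphi\mathbf{v}\}$; both arrive at the same three orthogonal contributions and the same Pythagorean sum.
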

\begin{proof}
The derivation starting from an extension of eq. (\ref{eq:nb_photon_Compton}) along with the computation of the differential element along the intersection of the torus and the cone is given in the Appendix.
\end{proof}

This Theorem delivers an integral representation for the second scattering in any architecture for the displacement of the source and detectors.

In order to express the second-order scattered data $g_2$ using a Dirac delta function, one needs to find the corresponding characteristic function of the intersection between the cone and the spindle torus. The latter is delivered by our condition on the scattering angles and the energy derived from the Compton formula eq. (\ref{eq:Relation_cos_Energy}). Considering the characteristic functions from eqs. (\ref{eq:phase_phi}) and (\ref{eq:phase_cone}), eq. (\ref{eq:Relation_cos_Energy}) yields 
\begin{equation}\label{eq:phase_g2}
\psi(\mathbf{y},\mathbf{x},\mathbf{s}) + \cos \left( \cot^{-1} \phi(\mathbf{y},\mathbf{x},\mathbf{d})\right) = \lambda(E) = \Psi(\mathbf{y},\mathbf{x},\mathbf{d},\mathbf{s}) 
\end{equation}
with $\cot^{-1} : \mathbb{R} \mapsto (0,\pi)$. Finally, using Theorem \ref{theo:integration_dirac} and the appropriate change of variable $\lambda$ instead of $E$, $g_2$, defined in Theorem \ref{theo:model_scat2}, yields
\begin{equation}\label{eq:def_S}
\mathcal{S}(n_e)(\lambda,\mathbf{d},\mathbf{s}) = \int_\Omega \int_{\Omega\setminus\{\mathbf{x}\}} \mathcal{W}_2(n_e)(\mathbf{x},\mathbf{y},\mathbf{d},\mathbf{s})
n_e(\mathbf{x}) n_e(\mathbf{y}) \delta(\lambda - \Psi(\mathbf{y},\mathbf{x},\mathbf{d},\mathbf{s})) \mathrm{d}\mathbf{y} \mathrm{d}\mathbf{x}
\end{equation}
with $\mathcal{W}_2(n_e) = \Vert \nabla_{\mathbf{x},\mathbf{y}} \Psi\Vert w_2(n_e)$.  The case $\mathbf{x} = \mathbf{y}$ is discarded as a source cannot be a new scattering point (it was the case $\mathbf{s} = \mathbf{x}$ for the first scattering).

The integral representations for $g_1$ and $g_2$ are now validated and compared to Monte-Carlo simulations.

\subsection{Comparisons with Monte-Carlo simulations}\label{ssec:MCPSF}

\begin{figure}[t]\centering
\includegraphics[width = 0.5\linewidth]{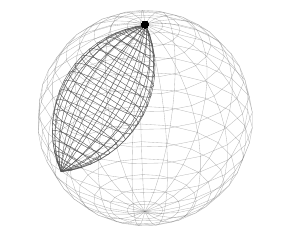}
\caption{Scanning geometry considered here: the source is fixed and a set of detectors are located on a sphere. The position of the source (here north pole) is not important as long as it remains outside the object.\label{fig:config_CSI}}
\end{figure}

The modellings of $g_1$ and $g_2$ have the advantage to be suited for any displacement/position of the detectors, represented by $\mathbb{D}$. We can then apply the approach for various architectures of 3D CSI. For instance, with the source and one detector drawing diameters of a given sphere \cite{WL_17}, or fixing the source and considering many detectors along a cylinder or a sphere. Here we chose the latter case which provides a modality rotation-free and no limited angle issues for the measurement, see Figure \ref{fig:config_CSI}. 

A cross-section of the \textit{scanner} is depicted in Figure \ref{fig:psf_MCvsAna}. The monochromatic source, emitting at $E_0 = 662$ keV, is fixed and located under the object while the detectors are located on a sphere (the half-circle on the slice) of diameter equal to 40 cm. Each detector is a disk of 2mm of radius. The volume to reconstruct is represented by a cube in the middle of 10x10x10cm$^3$. We consider that the source has emitted 10$^{11}$ photons; a sufficient amount in order to limitate the Poisson noise in the Monte-Carlo data. The electron density map is scaled on the value of water, \textit{i.e.} 3.23 $\cdot 10^{23}$ electrons per cm$^{-3}$, noted $n_w$. 

\begin{figure}[!t]\centering
\includegraphics[width=\linewidth]{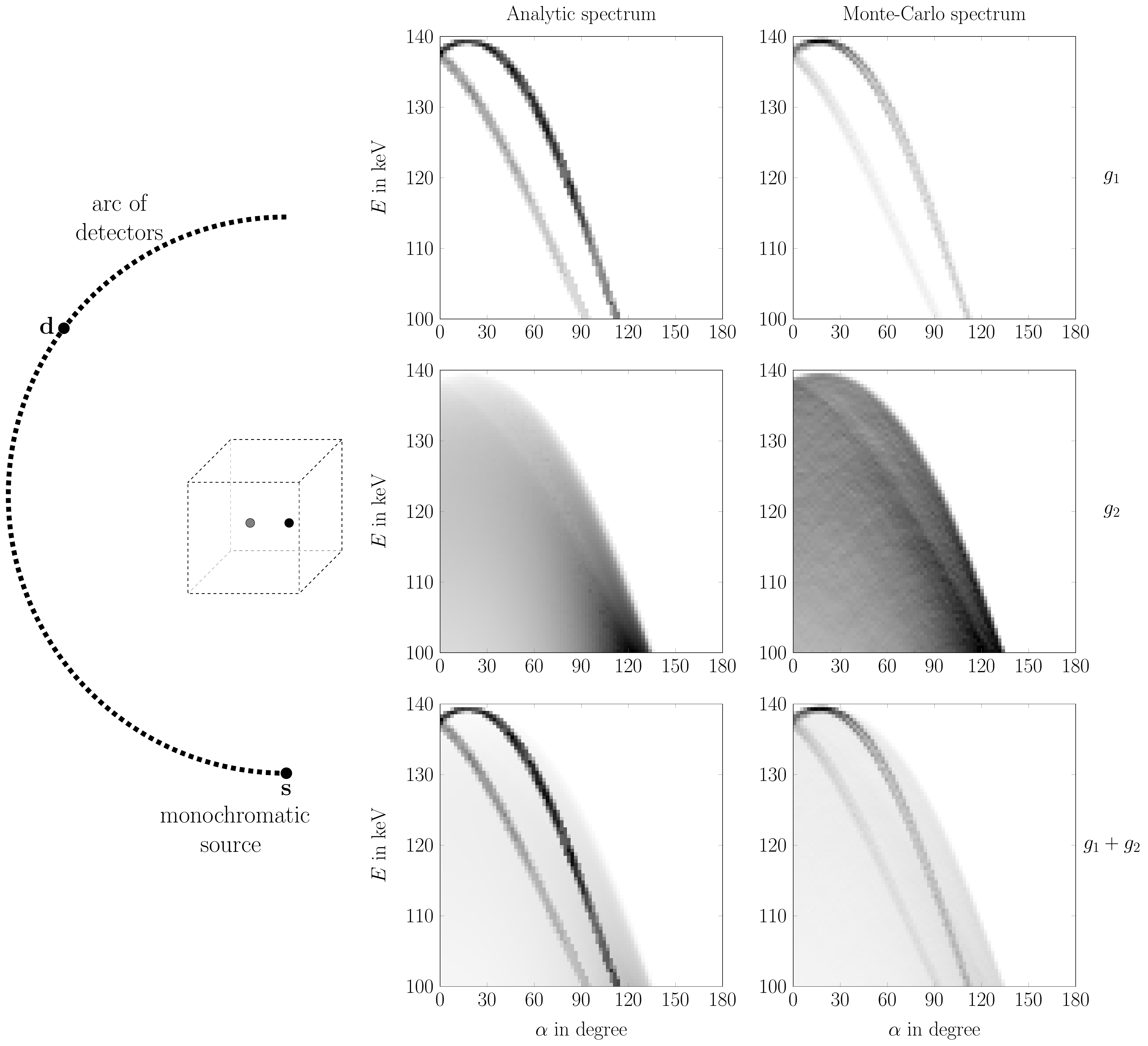}
\caption{Illustration of the \textit{point spread function} for the first-order scattered radiation $g_1$ and the second-order scattering radiation $g_2$. The sketch on the left depicts the configuration: a monochromatic source is fixed and the detectors are located on a half-circle. Our volume (the cube in the center) consists of two points with intensities $n_w$ and $2 n_w$. The figures in the middle correspond to an implentation of the integral representation of the data while the ones on the right gives the corresponding Monte-Carlo data. \label{fig:psf_MCvsAna}}
\end{figure}

We compare our model of the spectrum for first- and second-order scattering with ground truth data obtained by Monte-Carlo simulation. To view the response of the different operators, we consider the well-known \textit{point spread function}. Since we want to validate the modeling of the second scattering, we have to consider two \textit{points} (small spheres each of radius 2mm). Figure \ref{fig:psf_MCvsAna} displays the different results for one arc of detectors (the complete dataset is obtained for detectors along the complete sphere). Analytic data (middle column) are compared with ground truth data (right column). The first row shows the part of the first-scattering within the spectrum, $g_1$, the second row the part of the second scattering $g_2$, and the third row the spectrum $g_1 + g_2$ (neglecting higher-order scattering). The analytic spectrum and the different parts match with the realistic spectrum, in particular the singularitues. One can observe some variations in the intensities which arise from the discretization of the modeling. In particular, for the Monte-Carlo simulations, the detector is no more a point but a disk of size 4mm and the detection of the energy is made on a range of energy $E\pm \Delta E$ and not a single value $E$. Here $\Delta E$ is set 0.25 keV. Combined together, it means we do not integrate over spindle torus (or over the intersection with the cone for $g_2$) but over a strip around the spindle torus. Also, the computation of $g_2$ as an integral over manifolds of dimension 5 (the intersection between torus and cylinder for each potential first-scattering points) is numerically difficult to handle. However, the analytical modeling is revealed to appropriately model the spectrum as shown by the reconstructions in Section \ref{sec:simu_MC}. 

Since the model for the second-order scattering, $\mathcal{S}$, makes intervene a second integral over $\Omega$ (indeed all points of $\Omega$ are sources for the second scattering event), it is expected that $g_2$ should be smoother than $g_1$. In the next section, we justify and quantify this statement using the theory of FIO. Furthermore, the obtained results  on the forward operators show that the modeling of $g_2$ does not matter in practice since the extraction of the contours developed in Section \ref{sec:simu_MC} focuses on the singularities carried essentially by $g_1$ and by the same way circumvent the impact of the rest of the spectrum such as $g_2$. The explicit representation for the operator $\mathcal{S}$ is thus never used by the algorithm.

\section{Smoothness properties of the spectrum}\label{sec:smooth}

While it is difficult in practice to describe accurately physical variables as continuous functions, they are usually assumed to be piecewise continuous $L_2$ functions. However, (ii) requires the electron density $n_e$, more generally the attenuation map $\mu_E$, to be a $C^\infty(\Omega)$ function. On the other hand, solving ill-posed inverse problems of type $\mathcal{A} u = v$ with operator $\mathcal{A}:L_2(\Omega) \to L_2(\mathbb{R}\times \Theta)$  requires a regularization step that can be seen, at least for linear problems, as finding a reconstruction kernel $k_\gamma$ such that
$$
u_\gamma = \distrib{u}{e_\gamma} = \distrib{v}{k_\gamma}
$$ 
with $e_\gamma$ a suited mollifier, see \cite{Louis}. If $e_\gamma$ is a $C^\infty$-mollifier (for instance the Gaussian function), then the set of regularized solutions $u_\gamma$ will be $C^\infty$ functions which approximate the exact solution $u$. Built on this observation, our strategy is to interpret the spectral data $\mathcal{T}(n_e)$ and $\mathcal{S}(n_e)$ as pointwise approximations of Fourier integral operators which can be studied. These FIO are constructed from $\mathcal{T}$ and $\mathcal{S}$ by splitting 
the dependency on $n_e$ between the weight functions $w_1,w_2$ and the integrand itself which is achieved by considering a function $f\in L_2(\Omega)$ such that 
$
n_e = \distrib{f}{e_\gamma} 
$
with  $e_\gamma$ an appropriate $C^\infty$-mollifier. \\ 

Defining by 
\begin{equation}\label{eq:def_L1}
\mathcal{L}_1(f,n_e)(p,\mathbf{d},\mathbf{s}) = \int_{\Omega} \mathcal{W}_1(n_e) (\mathbf{x},\mathbf{d},\mathbf{s}) \ f(\mathbf{x}) \ \delta(p-\phi(\mathbf{x},\mathbf{d},\mathbf{s})) \ \mathrm{d}\mathbf{x}
\end{equation}
with $\mathcal{W}_1(n_e) = \Vert \nabla_\mathbf{x} \phi\Vert w_1(n_e)$, see eq. (\ref{eq:FIO_g1}), one gets an operator which is linear in terms of $f$ and such that $\mathcal{L}_1 (n_e,n_e) = \mathcal{T}(n_e)$. Furthermore, the operator
\begin{equation}\label{eq:def_L2}
\mathcal{L}_2(f,n_e)(\lambda,\mathbf{d},\mathbf{s}) = \int_\Omega \int_{\Omega\setminus\{\mathbf{x}\}} \mathcal{W}_2(n_e)(\mathbf{x},\mathbf{y},\mathbf{d},\mathbf{s})
f(\mathbf{x}) f(\mathbf{y}) \delta(\lambda - \Psi(\mathbf{y},\mathbf{x},\mathbf{d},\mathbf{s})) \mathrm{d}\mathbf{y} \mathrm{d}\mathbf{x}
\end{equation}
is linear w.r.t. the six-dimensional $f(\mathbf{x}) f(\mathbf{y})$ and satisfies $\mathcal{L}_2 (n_e,n_e) = \mathcal{S}(n_e)$ given in eq. (\ref{eq:def_S}).\\

The approximations of $\mathcal{T}$ and $\mathcal{S}$ by $\mathcal{L}_1$ and $\mathcal{L}_2$ are 
justified by Theorems \ref{theo:continuity_T} and \ref{theo:continuity_S}. These two theorems require the following Lemmas.

\begin{lemma}\label{theo:1to1} Let $\mathbf{s}$ and $\mathbf{d}$ be fixed. Then, for every $\mathbf{x}\in \mathbb{R}^3\setminus\{\mathbf{s}+t(\mathbf{d}-\mathbf{s}), t\in \mathbb{R}\}$, there exists a unique $\omega$ such that $\mathbf{x} \in \mathfrak{T}(\omega,\mathbf{s},\mathbf{d})$. In other words, the spindle tori $\mathfrak{T}(\omega,\mathbf{s},\mathbf{d})$ with $\omega \in (0,\pi)$ are one-to-one with $\mathbb{R}^3\setminus\{\mathbf{s}+t(\mathbf{d}-\mathbf{s}), t\in \mathbb{R}\}$. 
\end{lemma}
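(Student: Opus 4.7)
The plan is to deduce the claim essentially from the definition of the angle between two vectors. Since $\mathfrak{T}(\omega,\mathbf{s},\mathbf{d})$ is defined via the single equation $\sphericalangle(\mathbf{x}-\mathbf{s},\mathbf{d}-\mathbf{x}) = \pi - \omega$, the statement reduces to showing that this angle is a well-defined element of $(0,\pi)$ whenever $\mathbf{x}$ lies off the line $\{\mathbf{s}+t(\mathbf{d}-\mathbf{s}) : t \in \mathbb{R}\}$, and that $\omega \mapsto \pi - \omega$ is a bijection of $(0,\pi)$.

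First I would verify that the two vectors entering the angle are nonzero and non-collinear. Fix $\mathbf{x} \notin \{\mathbf{s}+t(\mathbf{d}-\mathbf{s})\}$. Then $\mathbf{x} \neq \mathbf{s}$ and $\mathbf{x} \neq \mathbf{d}$ (these points correspond to $t=0$ and $t=1$), so $\mathbf{x}-\mathbf{s}$ and $\mathbf{d}-\mathbf{x}$ are nonzero. If $\mathbf{d}-\mathbf{x} = c(\mathbf{x}-\mathbf{s})$ for some $c \in \mathbb{R}$, then $\mathbf{d}-\mathbf{s} = (1+c)(\mathbf{x}-\mathbf{s})$, which places $\mathbf{x}$ on the line and contradicts the hypothesis. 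Hence the two vectors are not collinear.

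Next I would appeal to the standard fact that for any two nonzero, non-collinear vectors $\mathbf{u},\mathbf{v} \in \mathbb{R}^3$, the quantity
$$
\sphericalangle(\mathbf{u},\mathbf{v}) = \arccos\!\left( \frac{\mathbf{u}\cdot \mathbf{v}}{\Vert \mathbf{u}\Vert \,\Vert \mathbf{v}\Vert} \right)
$$
is a unique element of $(0,\pi)$, because $\arccos$ is a bijection from $(-1,1)$ onto $(0,\pi)$ and non-collinearity rules out the endpoints $\pm 1$. Applied to $\mathbf{u} = \mathbf{x}-\mathbf{s}$ and $\mathbf{v} = \mathbf{d}-\mathbf{x}$, this produces a unique $\vartheta \in (0,\pi)$; setting $\omega := \pi - \vartheta$ yields the unique $\omega \in (0,\pi)$ with $\mathbf{x} \in \mathfrak{T}(\omega,\mathbf{s},\mathbf{d})$. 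Uniqueness follows at once since $\mathfrak{T}(\omega',\mathbf{s},\mathbf{d}) \cap \mathfrak{T}(\omega,\mathbf{s},\mathbf{d}) = \emptyset$ whenever $\omega \neq \omega'$ in $(0,\pi)$, because $\pi - \omega'$ would then be a different value of the (unique) angle.

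There is no serious obstacle here: the only thing to watch is the reduction to non-collinearity of $\mathbf{x}-\mathbf{s}$ and $\mathbf{d}-\mathbf{x}$, which must exclude both the parallel case (giving $\omega = \pi$, the degenerate primary-ray configuration) and the antiparallel case (giving $\omega = 0$, the backscattered configuration). Both are ruled out exactly by the excised line in the statement, so the lemma follows immediately from the bijectivity of $\arccos$ on $(-1,1)$.
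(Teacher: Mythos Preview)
Your argument is correct and more direct than the paper's. You work straight from the implicit definition $\mathfrak{T}(\omega,\mathbf{s},\mathbf{d}) = \{\mathbf{x} : \sphericalangle(\mathbf{x}-\mathbf{s},\mathbf{d}-\mathbf{x}) = \pi - \omega\}$ and reduce the claim to the bijectivity of $\arccos$ on $(-1,1)$, which is immediate once non-collinearity is checked. The paper instead passes through the explicit spherical parametrization of the torus,
\[
\mathbf{x} = \mathbf{s} + \Vert \mathbf{d}-\mathbf{s}\Vert \frac{\sin(\omega-\alpha)}{\sin\omega}\, R_2\begin{pmatrix}\sin\alpha\cos\beta\\ \sin\alpha\sin\beta\\ \cos\alpha\end{pmatrix},
\]
arguing that $(\alpha,\beta)$ fixes the direction of $\mathbf{x}-\mathbf{s}$ uniquely and that the radial factor $\sin(\omega-\alpha)/\sin\omega$ is bijective in $\omega$ for $\omega>\alpha$. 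Your route is shorter and avoids invoking the parametric form; the paper's route has the side benefit of setting up the change of variables $(p,\eta_1,\eta_2)\mapsto \mathbf{x}$ used immediately afterwards in the proof of Theorem~\ref{theo:continuity_T}. One very minor point: your parenthetical labels for the degenerate endpoints are swapped relative to the paper's convention (the paper identifies $\omega=0$ with the primary radiation), and your reduction $\mathbf{d}-\mathbf{s}=(1+c)(\mathbf{x}-\mathbf{s})$ tacitly uses $\mathbf{s}\neq\mathbf{d}$ to rule out $c=-1$; neither affects the validity of the argument.
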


\begin{proof}
See Appendix.
\end{proof}

\begin{lemma}\label{theo:1to1_intersection} Let $\mathbf{s}$, $\mathbf{x}$ and $\mathbf{d}$ be fixed. Then, for every $\mathbf{y}\in \mathbb{R}^3\setminus\{\mathbf{x}+t(\mathbf{d}-\mathbf{x}), t\in \mathbb{R}\}$, there exists a unique $\omega_1$ such that $\mathbf{y} \in \mathfrak{T}(\omega_2(\omega_1),\mathbf{d},\mathbf{x}) \cap \mathfrak{C}(\omega_1,\mathbf{x},\mathbf{s})$. 
\end{lemma}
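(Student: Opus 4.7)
The plan is to reduce the claim to Lemma~\ref{theo:1to1} together with the bijectivity of $\cos$ on $(0,\pi)$. Given $\mathbf{y}\notin\{\mathbf{x}+t(\mathbf{d}-\mathbf{x}),\,t\in\mathbb{R}\}$, which in particular rules out $\mathbf{y}=\mathbf{x}$, the first step is to observe that membership in the cone $\mathfrak{C}(\omega_1,\mathbf{x},\mathbf{s})$ already pins down $\omega_1$ uniquely: from the defining equation $\psi(\mathbf{y},\mathbf{x},\mathbf{s})=\cos\omega_1$ in \eqref{eq:phase_cone}, and because $\cos$ is a diffeomorphism from $(0,\pi)$ onto $(-1,1)$, one gets the candidate
$$
\omega_1\;=\;\arccos\bigl(\psi(\mathbf{y},\mathbf{x},\mathbf{s})\bigr)\in(0,\pi),
$$
irrespective of any torus consideration.

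The second step is to produce the matching torus. Applying Lemma~\ref{theo:1to1} with source and detector roles played by $\mathbf{x}$ and $\mathbf{d}$, and using the hypothesis that $\mathbf{y}$ is not on the line through $\mathbf{x}$ and $\mathbf{d}$, one obtains a unique $\omega_2\in(0,\pi)$ with $\mathbf{y}\in\mathfrak{T}(\omega_2,\mathbf{d},\mathbf{x})$. The Compton constraint \eqref{eq:Relation_cos_Energy} then reads $\lambda(E)=\cos\omega_1+\cos\omega_2$, which is precisely the definition of the detected energy associated with the pair $(\omega_1,\omega_2)$; equivalently $\omega_2=\omega_2(\omega_1)$ by construction of the map $\omega_1\mapsto\omega_2(\omega_1)$. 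Hence $\mathbf{y}\in\mathfrak{T}(\omega_2(\omega_1),\mathbf{d},\mathbf{x})\cap\mathfrak{C}(\omega_1,\mathbf{x},\mathbf{s})$, establishing existence.

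Uniqueness is immediate from the first step: any $\tilde\omega_1\in(0,\pi)$ with $\mathbf{y}$ on the corresponding cone must satisfy $\cos\tilde\omega_1=\psi(\mathbf{y},\mathbf{x},\mathbf{s})=\cos\omega_1$, and injectivity of $\cos$ on $(0,\pi)$ forces $\tilde\omega_1=\omega_1$.

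The only delicate point, which I do not expect to cause real difficulty, is the reading of the statement: the energy $E$ entering $\omega_2(\cdot)$ is not fixed a priori but is recovered from $\mathbf{y}$ through the Compton relation, as above. Once this interpretation is adopted, the lemma follows directly from the cone equation and Lemma~\ref{theo:1to1}, with no further microlocal or geometric input needed.
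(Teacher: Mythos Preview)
Your argument is correct and is somewhat more direct than the paper's. The paper proceeds through the explicit parametrization of the intersection: it writes the radius $r_\cap$ from eq.~(\ref{eq:radius_intersection}) and observes that $\cot\omega_2(\omega_1)=(\lambda(E)-\cos\omega_1)/\sqrt{1-(\lambda(E)-\cos\omega_1)^2}$ is a bijection in $\lambda(E)$, so that the family of intersection curves (indexed by $\lambda$) foliates the complement of the line through $\mathbf{x}$ and $\mathbf{d}$ in the same way the tori do in Lemma~\ref{theo:1to1}. You instead decouple the two constraints: the cone equation $\cos\omega_1=\psi(\mathbf{y},\mathbf{x},\mathbf{s})$ fixes $\omega_1$ by injectivity of $\cos$ on $(0,\pi)$, Lemma~\ref{theo:1to1} applied with base points $\mathbf{x},\mathbf{d}$ fixes $\omega_2$, and the Compton relation then recovers $\lambda(E)$. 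This avoids the explicit radius computation entirely and makes the role of the energy transparent --- your closing remark that $E$ must be read as determined by $\mathbf{y}$ rather than fixed beforehand is exactly the right interpretation and is only implicit in the paper's proof. One small point you may want to flag: your formula $\omega_1=\arccos\psi(\mathbf{y},\mathbf{x},\mathbf{s})\in(0,\pi)$ tacitly excludes $\mathbf{y}$ on the line through $\mathbf{x}$ and $\mathbf{s}$ (where $\psi=\pm1$), which the lemma's hypothesis does not; this is harmless for the downstream use in Theorem~\ref{theo:continuity_S} since it is a measure-zero set, and the paper's proof carries the same implicit exclusion.
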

\begin{proof}
Using the definition of the intersection given in eq. (\ref{eq:coordinates_intersection}), the radius is defined as
$$
r_\cap := \Vert \mathbf{d}-\mathbf{x}\Vert \left( z_\cap - \frac{\sqrt{1-z_\cap^2}}{\tan \omega_2}, \right)
$$
where 
$$
\cot \omega_2(\omega_1) = \frac{\lambda(E)-\cos \omega_1}{\sqrt{1-(\lambda(E)-\cos \omega_1)^2}}.
$$
This latter expression is a bijection with respect to $\lambda(E)$ with image $(0,\infty)$. Therefore, the proof is analogous to the proof of Lemma \ref{theo:1to1}.
\end{proof}

\begin{theorem}\label{theo:continuity_T} Let $f \in L_2(\Omega)$ with $\Omega\subset \mathbb{R}^3$. Since $C^\infty(\Omega)$ is dense in $L_2(\Omega)$, there exists a sequence $(f_n)_n \in C^\infty(\Omega)$ such that $f_n \to f$ as $n \to \infty$.  We denote by $\mathcal{T}_\mathbf{d}$ and $\mathcal{L}_{1,\mathbf{d}}$ the restrictions of $\mathcal{T}$ and $\mathcal{L}_1$ to one detector $\mathbf{d}$ and a given source $\mathbf{s}$. Then, it holds
$$
\lim_{n\to \infty} \Vert \mathcal{T}_\mathbf{d}(f_n) - \mathcal{L}_{1,\mathbf{d}}(f,f_n)\Vert_{L_2(\mathbb{R})} = 0.
$$
\end{theorem}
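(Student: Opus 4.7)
My strategy is to exploit the linearity of $\mathcal{L}_{1,\mathbf{d}}$ in its first argument to turn the difference into a single linear operator evaluated at $f_n-f$, and then combine the Dirac integration formula of Theorem \ref{theo:integration_dirac} with the bijective toric parametrization of Lemma \ref{theo:1to1} in order to control the $L^2(\mathbb{R})$ norm by $\|f_n-f\|_{L_2(\Omega)}$.

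Since $\mathcal{T}_\mathbf{d}(f_n) = \mathcal{L}_{1,\mathbf{d}}(f_n,f_n)$ and $\mathcal{L}_{1,\mathbf{d}}(\,\cdot\,,f_n)$ is linear,
$$
\mathcal{T}_\mathbf{d}(f_n) - \mathcal{L}_{1,\mathbf{d}}(f,f_n) = \mathcal{L}_{1,\mathbf{d}}(f_n - f, f_n).
$$
Applying Theorem \ref{theo:integration_dirac} with $\gamma(\mathbf{x})=\phi(\mathbf{x},\mathbf{d},\mathbf{s})-p$, and noting that the factor $\|\nabla_\mathbf{x}\phi\|$ in $\mathcal{W}_1$ is precisely the one absorbed by the Dirac formula, the right-hand side reduces to a surface integral
$$
\mathcal{L}_{1,\mathbf{d}}(f_n-f,f_n)(p) \;=\; \int_{S_p} w_1(f_n)(\mathbf{x})\,(f_n-f)(\mathbf{x})\, d\pi(\mathbf{x}),
$$
where $S_p:=\mathfrak{T}(\omega(p),\mathbf{d},\mathbf{s})\cap\Omega$. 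A Cauchy--Schwarz inequality on $S_p$ then gives the pointwise bound
$$
|\mathcal{L}_{1,\mathbf{d}}(f_n-f,f_n)(p)|^2 \;\leq\; \|w_1(f_n)\|_{L_\infty(\Omega)}^2\,|S_p|\,\int_{S_p}|f_n-f|^2\,d\pi.
$$

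Integrating in $p$ and invoking Lemma \ref{theo:1to1}, which asserts that $\phi(\,\cdot\,,\mathbf{d},\mathbf{s})$ is a smooth bijection from $\mathbb{R}^3\setminus\{\mathbf{s}+t(\mathbf{d}-\mathbf{s}):t\in\mathbb{R}\}$ onto $\mathbb{R}$, the coarea-type change of variables $\mathbf{x}\mapsto(p,u)$ yields
$$
\int_\mathbb{R}\!\int_{S_p} g(\mathbf{x})\,d\pi(\mathbf{x})\,dp \;=\; \int_\Omega g(\mathbf{x})\,|\nabla_\mathbf{x}\phi(\mathbf{x},\mathbf{d},\mathbf{s})|\,d\mathbf{x},
$$
which applied to $g=|f_n-f|^2$ provides
$$
\|\mathcal{T}_\mathbf{d}(f_n)-\mathcal{L}_{1,\mathbf{d}}(f,f_n)\|_{L_2(\mathbb{R})}^2 \;\leq\; C_n\,\|f_n-f\|_{L_2(\Omega)}^2,
$$
with $C_n := \|w_1(f_n)\|_{L_\infty(\Omega)}^2 \cdot \sup_p |S_p| \cdot \|\nabla_\mathbf{x}\phi\|_{L_\infty(\Omega)}$.

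It remains to bound $C_n$ uniformly in $n$. The geometric factor $\|\cdot\|^{-2}$ inside $w_1$ is bounded on $\Omega$ since $\mathbf{s},\mathbf{d}\notin\overline\Omega$ (the standing assumption in Theorem \ref{theo:model_scat2}), and the exponential factor is at most $1$ provided the approximants are chosen non-negative, e.g.\ as a mollification of a physical $f\geq 0$, so $\|w_1(f_n)\|_\infty$ is uniformly bounded. The area $\sup_p|S_p|$ is finite because $\Omega$ is compact and the spindle tori intersect it in surfaces of uniformly bounded area. The main obstacle is the finiteness of $\|\nabla_\mathbf{x}\phi\|_{L_\infty(\Omega)}$: from \eqref{eq:phase_phi}, $\phi=(\kappa-\rho)/\sqrt{1-\kappa^2}$ blows up as $\kappa\to\pm 1$, i.e.\ as $\mathbf{x}$ approaches the line through $\mathbf{s}$ and $\mathbf{d}$, which is exactly the set excluded in Lemma \ref{theo:1to1}. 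The proof therefore needs the implicit geometric hypothesis that $\Omega$ stays at positive distance from this line, which is in any case required for $\phi$ to qualify as a smooth phase in the subsequent FIO analysis. Under this separation, $C_n$ is bounded uniformly in $n$, and the convergence $\|f_n-f\|_{L_2(\Omega)}\to 0$ yields the claim.
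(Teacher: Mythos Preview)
Your proof is correct and follows essentially the same route as the paper. Both arguments rewrite the difference as $\mathcal{L}_{1,\mathbf{d}}(f_n-f,f_n)$, apply Cauchy--Schwarz on each toric slice, and then invoke the bijection of Lemma \ref{theo:1to1} to turn the $p$-integral back into an integral over $\Omega$, yielding a bound by $\|f_n-f\|_{L_2(\Omega)}^2$. The only cosmetic difference is that the paper writes the slice integral through an explicit parametrization $\mathbf{x}(p,\eta_1,\eta_2)$ with Jacobian $J$, while you use the Dirac/coarea formulation directly; your discussion of the uniform-in-$n$ bound on $C_n$ (non-negativity of the approximants to control the exponential, and separation of $\Omega$ from the line $\mathbf{s}\mathbf{d}$ to control $\|\nabla_\mathbf{x}\phi\|_\infty$) is actually more explicit than the paper, which simply asserts finiteness of its constant.
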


\begin{proof}
See Appendix.
\end{proof}

\begin{theorem}\label{theo:continuity_S} 
Let $f \in L_2(\Omega)\cap L_\infty(\Omega)$ with $\Omega\subset \mathbb{R}^3$. Since $C^\infty(\Omega)$ is dense in $L_2(\Omega)$, it exists a sequence $(f_n)_n \in C^\infty(\Omega)$ such that $f_n \to f$ as $n \to \infty$.  We denote by $\mathcal{S}_\mathbf{d}$ and $\mathcal{L}_{2,\mathbf{d}}$ the restrictions of $\mathcal{S}$ and $\mathcal{L}_2$ to one detector $\mathbf{d}$ and a given source $\mathbf{s}$. Then, it holds
$$
\lim_{n\to \infty} \Vert \mathcal{S}_\mathbf{d}(f_n) - \mathcal{L}_{2,\mathbf{d}}(f,f_n)\Vert_{L_2(\mathbb{R})} = 0.
$$
\end{theorem}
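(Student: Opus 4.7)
The plan is to extend the strategy behind Theorem~\ref{theo:continuity_T} to the bilinear operator $\mathcal{L}_2$. Since $\mathcal{S}_\mathbf{d}(f_n) = \mathcal{L}_{2,\mathbf{d}}(f_n,f_n)$, the integrand $f_n(\mathbf{x})f_n(\mathbf{y})$ and the phase $\Psi$ coincide in both $\mathcal{S}_\mathbf{d}(f_n)$ and $\mathcal{L}_{2,\mathbf{d}}(f,f_n)$; only the $n_e$-dependent weight differs, and the difference reads
$$
\bigl(\mathcal{S}_\mathbf{d}(f_n) - \mathcal{L}_{2,\mathbf{d}}(f,f_n)\bigr)(\lambda) = \int_\Omega \int_{\Omega\setminus\{\mathbf{x}\}} \bigl[\mathcal{W}_2(f_n) - \mathcal{W}_2(f)\bigr](\mathbf{x},\mathbf{y})\, f_n(\mathbf{x})\, f_n(\mathbf{y})\, \delta\bigl(\lambda - \Psi\bigr)\, \mathrm{d}\mathbf{y}\,\mathrm{d}\mathbf{x}.
$$
Applying Theorem~\ref{theo:integration_dirac} to collapse the Dirac onto the 5-dimensional surface $\{\Psi=\lambda\}$ (which foliates $\Omega\times\Omega$ smoothly off the negligible collinear loci of Lemma~\ref{theo:1to1_intersection}), then Cauchy--Schwarz on that surface and the co-area formula to integrate in $\lambda$, one obtains an inequality of the form
$$
\|\mathcal{S}_\mathbf{d}(f_n) - \mathcal{L}_{2,\mathbf{d}}(f,f_n)\|_{L_2(\mathbb{R})}^2 \leq C \int_\Omega \int_\Omega \bigl|w_2(f_n) - w_2(f)\bigr|^2\, |f_n(\mathbf{x})|^2 |f_n(\mathbf{y})|^2\, \|\nabla_{\mathbf{x},\mathbf{y}}\Psi\|\, \mathrm{d}\mathbf{x}\,\mathrm{d}\mathbf{y},
$$
in which $w_2$ denotes the attenuation part of $\mathcal{W}_2$ and $C$ bounds the surface area of the level sets uniformly in $\lambda$.

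The weight factorizes as $w_2(n_e) = A_{E_0}(\mathbf{s},\mathbf{x})\, A_{E_{\omega_1}}(\mathbf{x},\mathbf{y})\, A_{E_{\omega_2}}(\mathbf{y},\mathbf{d})$, with each attenuation depending on $n_e$ only through an exponential of a segment integral of $\mu_E$; Stonestrom's model~(\ref{eq:stonestrom}) makes $\mu_E$ affine in $n_e$. Since $f \in L_\infty(\Omega)$, one may choose a mollified sequence with $\|f_n\|_\infty \leq \|f\|_\infty$, so that the exponents stay uniformly bounded in $n$. Combining $|e^{-a}-e^{-b}|\leq |a-b|$ for $a,b\geq 0$ with the telescoping estimate $|A_1 A_2 A_3 - A'_1 A'_2 A'_3| \leq \sum_i |A_i - A'_i|$ for factors in $(0,1]$ yields the pointwise control
$$
\bigl|w_2(f_n) - w_2(f)\bigr| \leq C \bigl(L_{\mathbf{s},\mathbf{x}} + L_{\mathbf{x},\mathbf{y}} + L_{\mathbf{y},\mathbf{d}}\bigr), \qquad L_{\mathbf{a},\mathbf{b}} := \int_0^1 |f_n - f|\bigl(\mathbf{a} + t(\mathbf{b}-\mathbf{a})\bigr)\, \mathrm{d}t,
$$
reducing the proof to $L_2(\Omega\times\Omega)$ estimates for these three line-integral functions.

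The main obstacle is the middle segment $L_{\mathbf{x},\mathbf{y}}$, whose two endpoints both vary inside $\Omega$. The end terms $L_{\mathbf{s},\mathbf{x}}$ and $L_{\mathbf{y},\mathbf{d}}$ have one fixed endpoint outside $\Omega$ and reduce to $\|f_n-f\|_{L_2(\Omega)}^2$ by the standard $L_2$-boundedness of the cone-beam X-ray transform. For $L_{\mathbf{x},\mathbf{y}}$, Cauchy--Schwarz gives $L_{\mathbf{x},\mathbf{y}}^2 \leq \int_0^1 |f_n-f|^2(\mathbf{x}+t(\mathbf{y}-\mathbf{x}))\, \mathrm{d}t$; integrating this over $\Omega\times\Omega$, splitting the $t$-integral at $t=1/2$ and performing the change of variables $\mathbf{z} = \mathbf{x}+t(\mathbf{y}-\mathbf{x})$ (with Jacobian $t^{-3}$ or $(1-t)^{-3}$) also yields $C\|f_n-f\|_{L_2(\Omega)}^2$. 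The weight $\|\nabla_{\mathbf{x},\mathbf{y}}\Psi\|$ behaves like $1/\|\mathbf{x}-\mathbf{y}\|$ near the diagonal, which is locally integrable on the 6-dimensional product and therefore does not spoil the estimates. Combining the three contributions with $|f_n| \leq \|f\|_\infty$ shows the right-hand side converges to zero, which is the desired claim.
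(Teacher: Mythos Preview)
Your very first formula misidentifies the difference. By definition (eq.~(\ref{eq:def_L2})), the \emph{first} argument of $\mathcal{L}_2$ supplies the integrand and the \emph{second} argument supplies the weight: $\mathcal{L}_2(f,n_e)$ integrates $f(\mathbf{x})f(\mathbf{y})$ against $\mathcal{W}_2(n_e)$. Hence $\mathcal{L}_{2,\mathbf{d}}(f,f_n)$ carries the weight $\mathcal{W}_2(f_n)$ --- the same weight as $\mathcal{S}_\mathbf{d}(f_n)=\mathcal{L}_{2,\mathbf{d}}(f_n,f_n)$ --- and integrand $f(\mathbf{x})f(\mathbf{y})$. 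The actual difference is therefore
\[
\bigl(\mathcal{S}_\mathbf{d}(f_n) - \mathcal{L}_{2,\mathbf{d}}(f,f_n)\bigr)(\lambda)
= \int_\Omega\!\int_{\Omega\setminus\{\mathbf{x}\}} \mathcal{W}_2(f_n)\,\bigl[f_n(\mathbf{x})f_n(\mathbf{y})-f(\mathbf{x})f(\mathbf{y})\bigr]\,\delta(\lambda-\Psi)\,\mathrm{d}\mathbf{y}\,\mathrm{d}\mathbf{x},
\]
with a common weight and a difference only in the integrand. This is exactly the structure of Theorem~\ref{theo:continuity_T}, which is why the paper simply says ``analogous''.

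Consequently, your entire machinery --- the Lipschitz control of $w_2(f_n)-w_2(f)$ through segment integrals, the X-ray transform bounds, the delicate treatment of the interior segment $L_{\mathbf{x},\mathbf{y}}$ and the $1/\Vert\mathbf{x}-\mathbf{y}\Vert$ singularity --- is aimed at the wrong target and is not needed. The correct route mirrors the proof of Theorem~\ref{theo:continuity_T}: apply Cauchy--Schwarz on each level set to split off $\mathcal{W}_2(f_n)^2$ (a smooth, compactly supported function for each $n$), integrate in $\lambda$, and use Lemma~\ref{theo:1to1_intersection} to pass from the foliation $(\lambda,\omega_1,\varphi,\mathbf{x})$ back to $(\mathbf{x},\mathbf{y})\in\Omega\times\Omega$, arriving at a bound by $\Vert f_n\otimes f_n - f\otimes f\Vert_{L_2(\Omega\times\Omega)}^2$. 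The telescoping $f_n f_n - ff = (f_n-f)f_n + f(f_n-f)$ then uses $f\in L_\infty(\Omega)$ (and a uniform $L_\infty$ bound on the mollified $f_n$) to conclude. This is also where the additional hypothesis $f\in L_\infty(\Omega)$, absent in Theorem~\ref{theo:continuity_T}, actually enters.
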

\begin{proof}
Analogous to the proof of Theorem \ref{theo:continuity_T}.
\end{proof}

These theorems justify why the mapping $\mathcal{T}(n_e)$ (resp. $\mathcal{S}(n_e)$) can be approximated by $\mathcal{L}_1(f,n_e)$ (resp. $\mathcal{L}_2(f,n_e)$) for $f \in L_2(\Omega)\cap L_\infty(\Omega)$ and $n_e \in C^\infty(\Omega)$ such that $\Vert f - n_e \Vert_{L_2(\Omega)} \leq \epsilon \ll 1$.

In the following, we study $\mathcal{L}_1$ and $\mathcal{L}_2$ instead of $\mathcal{T}$ and $\mathcal{S}$ using the theory of Fourier integral operators.

\subsection{FIO and smoothness properties of $\mathcal{L}_1$ and $\mathcal{L}_2$}

We now derive the main results of this section which state the smoothness properties of $\mathcal{L}_1$ and $\mathcal{L}_2$.

\begin{theorem}\label{theo:FIO_L1} Let $n_e \in C^\infty(\Omega)$ given with $\Omega$ an open subset of $\mathbb{R}^3$. Then $\mathcal{L}_1$, defined in eq. (\ref{eq:def_L1}), belongs to $I^{-1}(\mathbb{R} \times \mathbb{D},\Omega)$ and is thus a Fourier integral operator of order $-1$ with respect to $f$ when the source $\mathbf{s}$ is fixed.
\end{theorem}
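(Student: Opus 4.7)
The plan is to rewrite the Dirac distribution in the definition of $\mathcal{L}_1$ using its Fourier representation and identify the resulting oscillatory integral with the canonical form in Definition \ref{def:FIO}. Using $\delta(t) = \frac{1}{2\pi}\int_\mathbb{R} e^{i\sigma t}\,\mathrm{d}\sigma$, eq. (\ref{eq:def_L1}) becomes
$$
\mathcal{L}_1(f,n_e)(p,\mathbf{d},\mathbf{s}) = \frac{1}{2\pi}\int_\Omega \int_\mathbb{R} e^{i\sigma(p-\phi(\mathbf{x},\mathbf{d},\mathbf{s}))}\, \mathcal{W}_1(n_e)(\mathbf{x},\mathbf{d},\mathbf{s})\, f(\mathbf{x})\,\mathrm{d}\sigma\,\mathrm{d}\mathbf{x},
$$
so the natural phase is $\Phi(p,\mathbf{d},\mathbf{x},\sigma)=\sigma(p-\phi(\mathbf{x},\mathbf{d},\mathbf{s}))$ and the natural amplitude is $a = \frac{1}{2\pi}\mathcal{W}_1(n_e)$. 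The source is fixed, so the parameter space is $\mathbb{R}\times \mathbb{D}$ with $\mathbb{D}$ a two-dimensional detector surface; hence $n=3$ and $m=3$ in the sense of Definition \ref{def:FIO}.

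Next I would verify the three conditions on $\Phi$. Positive homogeneity of degree one in $\sigma$ is immediate from the product form. For the nonvanishing condition, $\partial_p \Phi = \sigma \neq 0$ so $(\partial_{p,\mathbf{d}}\Phi,\partial_\sigma \Phi)$ never vanishes; while $\partial_\mathbf{x}\Phi = -\sigma\,\nabla_\mathbf{x}\phi$, so $(\partial_\mathbf{x}\Phi,\partial_\sigma \Phi)$ is nonzero provided $\nabla_\mathbf{x}\phi \neq 0$ on $\Omega$. The latter is the one genuinely nontrivial point: from the explicit form (\ref{eq:phase_phi})--(\ref{eq:def_kappa_rho}), $\phi$ is $C^\infty$ away from the line through $\mathbf{s}$ and $\mathbf{d}$ (where $\kappa=\pm 1$), and Lemma \ref{theo:1to1} asserts that the level sets $\{\phi(\cdot,\mathbf{d},\mathbf{s})=p\}=\mathfrak{T}(\omega,\mathbf{d},\mathbf{s})$ foliate $\mathbb{R}^3$ minus that line in a one-to-one manner as $p=\cot\omega$ varies; together with the smoothness of $\phi$ in $\mathbf{x}$, this rules out critical points of $\phi(\cdot,\mathbf{d},\mathbf{s})$. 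Since $\mathbf{s},\mathbf{d}$ lie outside $\Omega$ and $\Omega$ avoids the line, the gradient is bounded away from zero on compact sets. Nondegeneracy is then automatic because $\partial_{p,\mathbf{d},\mathbf{x}}(\partial_\sigma \Phi) = (1,-\partial_\mathbf{d}\phi,-\nabla_\mathbf{x}\phi)$ has $1$ in its first slot and therefore cannot vanish.

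Finally, I would check that $a$ meets the amplitude estimate (\ref{eq:cond_amplitude}) of order $s=0$. Because $n_e \in C^\infty(\Omega)$ with $\mathbf{s}$, $\mathbf{d}$ outside $\Omega$, the Beer--Lambert factor $w_1(n_e)=A_{E_0}A_{E_\omega}$ is smooth in $(\mathbf{x},\mathbf{d})$ (the distance weights $\|\cdot\|^{-2}$ and the Klein--Nishina factor depend smoothly on $\omega$ which depends smoothly on $\mathbf{x},\mathbf{d}$ away from the excluded line). Combined with the smooth, nonvanishing $\|\nabla_\mathbf{x}\phi\|$, one obtains that $\mathcal{W}_1(n_e)$ is $C^\infty$ on compact subsets of $\Omega\times\mathbb{D}$; since it is $\sigma$-independent, all $\sigma$-derivatives of the amplitude vanish and (\ref{eq:cond_amplitude}) holds trivially with $s=0$. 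The order of the FIO is therefore
$$
s - \tfrac{n+m-2}{4} = 0 - \tfrac{3+3-2}{4} = -1,
$$
establishing $\mathcal{L}_1 \in I^{-1}(\mathbb{R}\times \mathbb{D},\Omega)$. The main obstacle in this plan is the control of $\nabla_\mathbf{x}\phi$; once one exploits Lemma \ref{theo:1to1} to produce a smooth foliation by the tori, the rest amounts to book-keeping of dimensions and smoothness of the weight.
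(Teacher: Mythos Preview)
Your overall architecture---Fourier representation of $\delta$, identification of phase $\Phi=\sigma(p-\phi)$ and amplitude $\mathcal{W}_1(n_e)$, verification of the three phase conditions, amplitude of order $0$, and the dimension count giving order $-1$---is exactly the route taken in the paper, and everything except one step is fine.

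The genuine gap is your argument for $\nabla_\mathbf{x}\phi\neq 0$. You invoke Lemma~\ref{theo:1to1} to say that the tori $\{\phi(\cdot,\mathbf{d},\mathbf{s})=p\}$ foliate $\mathbb{R}^3$ minus the line in a one-to-one fashion, and then claim that smoothness of $\phi$ plus this bijective foliation ``rules out critical points''. It does not: take $\phi(x_1,x_2,x_3)=x_1^3$; the level sets $\{x_1=p^{1/3}\}$ are smooth planes foliating $\mathbb{R}^3$ bijectively, yet $\nabla\phi$ vanishes on the entire plane $\{x_1=0\}$. Lemma~\ref{theo:1to1} only gives a set-theoretic bijection between parameter values and level sets; it says nothing about the regularity of $\phi$ as a defining function, so you cannot conclude $\nabla_\mathbf{x}\phi\neq 0$ from it.

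The paper closes this gap by direct computation: it writes out $\nabla_\mathbf{x}\phi$ explicitly as a combination of $(\mathbf{d}-\mathbf{s})/\Vert\mathbf{d}-\mathbf{s}\Vert$ and $(\mathbf{x}-\mathbf{s})/\Vert\mathbf{x}-\mathbf{s}\Vert$ and obtains
\[
\Vert\nabla_\mathbf{x}\phi(\mathbf{x},\mathbf{d},\mathbf{s})\Vert^2=\frac{1-2\rho(\mathbf{x})\kappa(\mathbf{x})+\rho(\mathbf{x})^2}{(1-\kappa(\mathbf{x})^2)^2\Vert\mathbf{x}-\mathbf{s}\Vert^2}.
\]
The numerator, viewed as a quadratic in $\rho$, has discriminant $4(\kappa^2-1)<0$ (since $|\kappa|<1$ off the excluded line) and is therefore strictly positive; the denominator is nonzero for the same reason. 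This explicit calculation is short and is what you should substitute for the foliation argument. Once you do, the rest of your proof is correct and coincides with the paper's.
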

\begin{proof}
Using the Fourier representation of the Dirac distribution, one gets 
$$
\mathcal{L}_1(f,n_e)(p,\mathbf{d},\mathbf{s}) = \frac{1}{\sqrt{2\pi}} \int_{\Omega}\int_\mathbb{R} \mathcal{W}_1 (n_e) (\mathbf{x},\mathbf{d},\mathbf{s}) \ f(\mathbf{x}) \ e^{-i\sigma(p-\phi(\mathbf{x},\mathbf{d},\mathbf{s}))} \ \mathrm{d}\sigma \mathrm{d}\mathbf{x}.
$$
Taking into account that $\mathbf{s}$ is fixed and defining the phase
$$
\Phi(p,\mathbf{d},\mathbf{x},\sigma) = \sigma(p - \phi(\mathbf{x},\mathbf{d},\mathbf{s})),
$$
one needs to prove that $(\partial_{p,\mathbf{d}} \Phi, \partial_\sigma \Phi)$ and $(\partial_\mathbf{x} \Phi, \partial_\sigma \Phi)$ do not vanish for all $(p,\mathbf{d},\mathbf{x},\sigma) \in \mathbb{R} \times \mathbb{D} \times \Omega \times \mathbb{R}\setminus\{0\}$, that the phase is non-degenerate and positive homogeneous of degree 1 w.r.t. $\sigma$. The homogeneity is clear. We further obtain
$$
\Sigma_\Phi = \{(p,\mathbf{d},\mathbf{x},\sigma) \in \mathbb{R} \times \mathbb{D} \times \Omega \times \mathbb{R} \setminus \{0\} \ | \ \partial_\sigma \Phi = 0\}
$$
with
$$
\partial_\sigma \Phi = (p - \phi(\mathbf{x},\mathbf{d},\mathbf{s})) \mathrm{d}\sigma.
$$
One obtains thus that the phase is nondegenerate since 
$$
\partial_{p,\mathbf{d},\mathbf{x}} (\partial_\sigma \Phi) = \mathrm{d}\sigma (\mathrm{d}p - \partial_\mathbf{d} \phi(\mathbf{x},\mathbf{d},\mathbf{s}) - \partial_\mathbf{x} \phi(\mathbf{x},\mathbf{d},\mathbf{s})) \neq 0.
$$
Now we need to prove that $(\partial_{p,\mathbf{d}} \Phi, \partial_\sigma \Phi)$ and $(\partial_\mathbf{x} \Phi, \partial_\sigma \Phi)$ do not vanish for all $(p,\mathbf{d},\mathbf{x},\mathbf{y},\sigma) \in \mathbb{R} \times \mathbb{D} \times \Omega_2 \times \mathbb{R}\setminus\{0\}$. For the first case, one has
$$
\partial_{p,\mathbf{d}} \Phi = \sigma ( \mathrm{d}p - \partial_\mathbf{d} \phi(\mathbf{x},\mathbf{d},\mathbf{s}) ).
$$
Since $\sigma \neq 0$, this derivative never vanishes. It remains to show that 
$(\partial_\mathbf{x} \Phi, \partial_\sigma \Phi)$ does not vanish. A straightforward calculation gives 
\begin{eqnarray}\label{eq:gradient_phase_phi}
\nabla_{\mathbf{x}}\phi(\mathbf{x},\mathbf{d},\mathbf{s})  
&= \frac{(1- \rho(\mathbf{x}) \kappa(\mathbf{x}) )}{\Vert \mathbf{x}-\mathbf{s}\Vert (1-\kappa^2(\mathbf{x})^{3/2}} \frac{\mathbf{d}-\mathbf{s}}{\Vert \mathbf{d}-\mathbf{s} \Vert} \nonumber\\
&- \frac{1}{\Vert \mathbf{x}-\mathbf{s}\Vert \sqrt{1-\kappa^2(\mathbf{x})}} 
\left( \rho(\mathbf{x}) + \kappa(\mathbf{x}) \frac{1-\rho(\mathbf{x})\kappa(\mathbf{x})}{1-\kappa^2(\mathbf{x})}
\right) \frac{\mathbf{x}-\mathbf{s}}{\Vert\mathbf{x}-\mathbf{s}\Vert}.
\end{eqnarray}
After some computations, the norm of $\nabla_{\mathbf{x}}\phi(\mathbf{x},\mathbf{d},\mathbf{s})$ can then be written as
$$
\Vert \nabla_{\mathbf{x}}\phi(\mathbf{x},\mathbf{d},\mathbf{s})\Vert^2 = \frac{1-2\; \rho(\mathbf{x})\kappa(\mathbf{x})+ \rho^2(\mathbf{x})}{(1-\kappa^2(\mathbf{x}))^2 \Vert \mathbf{x}-\mathbf{s}\Vert^2}.
$$
The numerator has only complex roots and thus never cancels out while $\kappa(\mathbf{x}) = 1$ corresponds to the degenerate case of the torus which is excluded (primary radiation).  In consequence, $(\partial_\mathbf{x} \Phi, \partial_\sigma \Phi)$ is never zero and $\mathcal{L}_1$ is a FIO.

Regarding the symbol, the definition of $\mathcal{L}_1$ in eq. (\ref{eq:def_L1}) leads to the symbol
$$
\mathcal{W}_1 (n_e)(\mathbf{x},\mathbf{d},\mathbf{s}) = \Vert\nabla_\mathbf{x} \phi (\mathbf{x},\mathbf{d},\mathbf{s}) \Vert  A_{E_0}(\mathbf{s},\mathbf{x}) \cdot A_{E_\omega}(\mathbf{x},\mathbf{d}).
$$
The phase $\phi$ is a smooth function with non-vanishing derivatives and since $n_e$ is assumed to be $C^\infty$ smooth and compactly supported, its integrals along the traveling paths, $\mathbf{s}\to\mathbf{x}\to\mathbf{d}$, are also $C^\infty$. The symbol is similar to the one of the attenuated Radon transforms, see \cite{Kuchment}. Therefore, the symbol of $\mathcal{L}_1$ satisfies eq. (\ref{eq:cond_amplitude}) and is of order 0. Hence, following on from Theorem \ref{theo:Hormander}, $\mathcal{L}_1(\cdot,n_e)$ is thus a FIO of order 
$$
m = \frac{1}{2} - \frac{3+3}{4} = -1.
$$  
\end{proof}

We now give the equivalent property for the second-order and its approximation the operator $\mathcal{L}_2$. 

\begin{theorem}\label{theo:FIO_L2} Let the source $\mathbf{s}$ be fixed and $n_e \in C^\infty(\Omega)$ given with $\Omega$ an open subset of $\mathbb{R}^3$.
Then, $\mathcal{L}_2$, defined in eq. (\ref{eq:def_L2}), belongs to $I^{-\frac74}(\mathbb{R} \times \mathbb{D},\Omega_2)$ with 
$$
\Omega_2 = \left\{(\mathbf{x},\mathbf{y}) \in \Omega \times \Omega \ | \  \mathbf{y} \notin \mathfrak{N}(\mathbf{x})\right\},
$$
where $\mathfrak{N}(\mathbf{x})$ denoting a neighborhood around $\mathbf{x}$. $\mathcal{L}_2$ is thus a FIO of order $-\frac{7}{4}$ with respect to $f$.
\end{theorem}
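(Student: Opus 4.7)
The proof plan parallels that of Theorem \ref{theo:FIO_L1}, but now the integrand acts on the six-dimensional variable $(\mathbf{x},\mathbf{y}) \in \Omega_2$ while the output is parametrized by the one-dimensional energy coordinate $\lambda$ and the two-dimensional detector position $\mathbf{d} \in \mathbb{D}$. First I rewrite the Dirac distribution via its Fourier representation,
$$
\mathcal{L}_2(f,n_e)(\lambda,\mathbf{d},\mathbf{s}) = \frac{1}{2\pi}\int_{\Omega_2}\int_\mathbb{R} \mathcal{W}_2(n_e)(\mathbf{x},\mathbf{y},\mathbf{d},\mathbf{s})\, f(\mathbf{x})\, f(\mathbf{y})\, e^{-i\sigma(\lambda - \Psi(\mathbf{y},\mathbf{x},\mathbf{d},\mathbf{s}))}\, \mathrm{d}\sigma\, \mathrm{d}\mathbf{x}\, \mathrm{d}\mathbf{y},
$$
so that the candidate phase is $\Phi(\lambda,\mathbf{d},\mathbf{x},\mathbf{y},\sigma) = \sigma(\lambda - \Psi(\mathbf{y},\mathbf{x},\mathbf{d},\mathbf{s}))$ and the amplitude is $\mathcal{W}_2(n_e)$.

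Verifying the three items of Definition \ref{def:FIO}: positive $1$-homogeneity in $\sigma$ is immediate. Since $\partial_\sigma \Phi = \lambda - \Psi$, the vector $\partial_{\lambda,\mathbf{d},\mathbf{x},\mathbf{y}}(\partial_\sigma \Phi)$ always carries the non-zero component $\mathrm{d}\lambda$, so $\Phi$ is non-degenerate on $\Sigma_\Phi$. The same $\mathrm{d}\lambda$ component makes $(\partial_{\lambda,\mathbf{d}}\Phi, \partial_\sigma \Phi)$ never vanish whenever $\sigma \neq 0$.

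The genuine work is the condition $(\partial_{\mathbf{x},\mathbf{y}}\Phi, \partial_\sigma \Phi) \neq 0$ for $(\lambda,\mathbf{d},\mathbf{x},\mathbf{y},\sigma) \in \mathbb{R}\times \mathbb{D}\times \Omega_2 \times \mathbb{R}\setminus\{0\}$, which on $\Sigma_\Phi$ reduces to $\nabla_{\mathbf{x},\mathbf{y}}\Psi \neq 0$. Since $\Psi = \psi + \cos(\cot^{-1}\phi)$ decomposes into the cone characteristic $\psi(\mathbf{y},\mathbf{x},\mathbf{s})$ plus a strictly monotone transformation of the torus characteristic $\phi(\mathbf{y},\mathbf{x},\mathbf{d})$, I focus on the $\mathbf{y}$-gradient. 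Repeating the calculation \ref{eq:gradient_phase_phi} with $(\mathbf{s},\mathbf{x})$ replaced by $(\mathbf{x},\mathbf{y})$ and $\mathbf{d}$ kept yields
$$
\Vert \nabla_\mathbf{y}\phi(\mathbf{y},\mathbf{x},\mathbf{d})\Vert^2 = \frac{1 - 2\,\widetilde{\rho}\widetilde{\kappa} + \widetilde{\rho}^{\,2}}{(1-\widetilde{\kappa}^2)^2\,\Vert \mathbf{y}-\mathbf{x}\Vert^2} \neq 0,
$$
by the same positive-numerator argument as in Theorem \ref{theo:FIO_L1}, where $\widetilde{\kappa},\widetilde{\rho}$ are defined as in \ref{eq:def_kappa_rho} with the shifted source. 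The chain rule, together with $(\cos\circ \cot^{-1})'(u) = (1+u^2)^{-3/2} > 0$, transfers this non-vanishing to $\nabla_\mathbf{y}\cos(\cot^{-1}\phi)$. The main obstacle is ruling out an exact cancellation with $\nabla_\mathbf{y}\psi$; this is handled by observing that $\nabla_\mathbf{y}\phi$ and $\nabla_\mathbf{y}\psi$ are, up to positive scalars, the normals at $\mathbf{y}$ to the torus $\mathfrak{T}(\omega_2,\mathbf{d},\mathbf{x})$ and the cone $\mathfrak{C}(\omega_1,\mathbf{x},\mathbf{s})$ respectively, and Lemma \ref{theo:1to1_intersection} together with the exclusion $\mathbf{y}\notin \mathfrak{N}(\mathbf{x})$ guarantees that these two surfaces meet transversally, so the normals are linearly independent and the weighted sum cannot vanish.

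Finally, the amplitude $\mathcal{W}_2(n_e) = \Vert\nabla_{\mathbf{x},\mathbf{y}}\Psi\Vert\, A_{E_0}(\mathbf{s},\mathbf{x})\, A_{E_{\omega_1}}(\mathbf{x},\mathbf{y})\, A_{E_{\omega_2}}(\mathbf{y},\mathbf{d})$ is $C^\infty$ on $\Omega_2$ (all three attenuation factors are smooth because $n_e \in C^\infty(\Omega)$ and the travelling paths avoid the degenerate configurations by construction of $\Omega_2$) and $\sigma$-independent, so it satisfies \ref{eq:cond_amplitude} with $s = 0$, exactly as in the treatment of $\mathcal{L}_1$. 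With $n = \dim \Omega_2 = 6$ and $m = \dim(\mathbb{R}\times \mathbb{D}) = 3$, Definition \ref{def:FIO} delivers the FIO order
$$
s - \frac{n+m-2}{4} = 0 - \frac{6+3-2}{4} = -\frac{7}{4},
$$
hence $\mathcal{L}_2 \in I^{-7/4}(\mathbb{R}\times\mathbb{D},\Omega_2)$ as claimed.
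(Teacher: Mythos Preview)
Your proof follows the same template as the paper's---Fourier representation of the delta, verification of the phase axioms, amplitude of order $0$, and the dimension count $0-(6+3-2)/4=-7/4$---and those parts are fine. The divergence is in the one substantive step: showing $\nabla_{\mathbf{x},\mathbf{y}}\Psi\neq 0$ on $\Omega_2$.

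You argue geometrically: $\nabla_\mathbf{y}\psi$ and $\nabla_\mathbf{y}\phi$ are the cone and torus normals at $\mathbf{y}$, and since the two surfaces meet transversally these normals are linearly independent, so the positive combination $\nabla_\mathbf{y}\psi+(1+\phi^2)^{-3/2}\nabla_\mathbf{y}\phi$ cannot vanish. The implication ``independent $\Rightarrow$ nonzero weighted sum'' is correct, but your justification of transversality is a gap: you cite Lemma~\ref{theo:1to1_intersection}, which is a \emph{bijectivity} statement (each $\mathbf{y}$ off a line lies on exactly one intersection curve of the family). Bijectivity of the family does not by itself rule out that an individual cone--torus pair is tangent somewhere; the lemma says nothing about the rank of the pair of normals.

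The paper avoids the transversality question altogether by a direct directional derivative. Dotting $\nabla_\mathbf{y}\Psi$ against the radial vector $\mathbf{y}-\mathbf{x}$ annihilates the cone part, because $\psi$ depends on $\mathbf{y}$ only through the unit vector $(\mathbf{y}-\mathbf{x})/\Vert\mathbf{y}-\mathbf{x}\Vert$ and hence $\nabla_\mathbf{y}\psi\cdot(\mathbf{y}-\mathbf{x})=0$; the torus part survives,
\[
\nabla_\mathbf{y}\phi(\mathbf{y},\mathbf{d},\mathbf{x})\cdot(\mathbf{y}-\mathbf{x})=-\frac{\rho(\mathbf{y})}{\sqrt{1-\kappa(\mathbf{y})^2}}\neq 0\quad\text{whenever }\mathbf{y}\neq\mathbf{x},
\]
which is exactly what the exclusion $\mathbf{y}\notin\mathfrak{N}(\mathbf{x})$ guarantees. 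Incidentally, this very computation \emph{proves} the transversality you asserted (the two normals have different components along $\mathbf{y}-\mathbf{x}$, hence cannot be parallel), so your geometric route can be repaired---but the repair is precisely the calculation the paper performs directly.
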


\textbf{Remark:} The construction of $\Omega_2$ is not constraining in practice as two successive scattering events cannot occur at the same location.

\begin{proof}
By analogy with $\mathcal{L}_1$, and since $\mathbf{s}$ is fixed, one can define the phase
$$
\Upsilon(\lambda,\mathbf{d},\mathbf{y},\mathbf{x},\sigma) = \sigma(\lambda - \Psi(\mathbf{y},\mathbf{x},\mathbf{d},\mathbf{s})),
$$
and one needs to prove that $(\partial_{\lambda,\mathbf{d}} \Upsilon, \partial_\sigma \Upsilon)$ and $(\partial_{\mathbf{y},\mathbf{x}} \Upsilon, \partial_\sigma \Upsilon)$ do not vanish for all $(\lambda,\mathbf{d},\mathbf{x},\mathbf{y},\sigma) \in (0,2) \times \mathbb{D} \times \Omega_2 \times \mathbb{R}\setminus\{0\}$, that the phase is non-degenerate and positive homogeneous of degree 1 w.r.t. $\sigma$. The homogeneity is clear. We further obtain
$$
\Sigma_\Upsilon = \{(\lambda,\mathbf{d},\mathbf{y},\mathbf{x},\sigma) \in (0,2) \times \mathbb{D} \times \Omega \times \Omega \times \mathbb{R} \setminus \{0\} \ | \ \partial_\sigma \Upsilon = 0\}
$$
with
$$
\partial_\sigma \Upsilon = (\lambda - \Psi(\mathbf{y},\mathbf{x},\mathbf{d},\mathbf{s})) \mathrm{d}\sigma.
$$
One obtains thus that the phase is nondegenerate since 
$$
\partial_{\lambda,\mathbf{d},\mathbf{y},\mathbf{x}} (\partial_\sigma \Phi) = \mathrm{d}\sigma (\mathrm{d}p - \partial_\mathbf{d} \Psi(\mathbf{y},\mathbf{x},\mathbf{d},\mathbf{s})  - \partial_\mathbf{y} \Psi(\mathbf{y},\mathbf{x},\mathbf{d},\mathbf{s}) - \partial_\mathbf{x} \Psi(\mathbf{y},\mathbf{x},\mathbf{d},\mathbf{s})) \neq 0.
$$
Now we need to prove that $(\partial_{\lambda,\mathbf{d}} \Upsilon, \partial_\sigma \Upsilon)$ and $(\partial_{\mathbf{y},\mathbf{x}} \Upsilon, \partial_\sigma \Upsilon)$ do not vanish for all $(\lambda,\mathbf{d},\mathbf{x},\mathbf{y},\sigma) \in (0,2)\times \mathbb{D} \times \Omega_2 \times \mathbb{R}\setminus\{0\}$. For the first case, one has
$$
\partial_{\lambda,\mathbf{d}} \Upsilon = \sigma ( \mathrm{d}\lambda - \partial_\mathbf{d} \Psi(\mathbf{y},\mathbf{x},\mathbf{d},\mathbf{s}) ).
$$
Since $\sigma \neq 0$, this derivative never vanishes. It remains to show that 
$(\partial_{\mathbf{y},\mathbf{x}} \Upsilon, \partial_\sigma \Upsilon)$ does not vanish. Computing the gradient w.r.t $\mathbf{y}$ of both components in eq. (\ref{eq:phase_g2}) gives
\begin{equation}\label{eq:nabla_psi}
\nabla_{\mathbf{y}} \psi(\mathbf{y},\mathbf{x},\mathbf{s}) = \frac{1}{\Vert \mathbf{y}-\mathbf{x}\Vert} \left( \frac{(\mathbf{x}-\mathbf{s})}{\Vert \mathbf{x}-\mathbf{s}\Vert} - \left(\frac{ (\mathbf{x}-\mathbf{s})}{\Vert \mathbf{x}-\mathbf{s}\Vert} \cdot  \frac{(\mathbf{y}-\mathbf{x}) }{\Vert \mathbf{y}-\mathbf{x}\Vert} \right) \frac{(\mathbf{y}-\mathbf{x})}{\Vert \mathbf{y}-\mathbf{x}\Vert} \right)
\end{equation}
and
\begin{equation}\label{eq:nabla_cos_phi}
\nabla_{\mathbf{y}} \cos \left( \cot^{-1} \phi(\mathbf{y},\mathbf{d},\mathbf{x})\right) =
\frac{\nabla_{\mathbf{y}} \phi(\mathbf{y},\mathbf{d},\mathbf{x})}{(1+\phi(\mathbf{y},\mathbf{d},\mathbf{x})^2)^{3/2}} 
\end{equation}
with
\begin{eqnarray*}
\nabla_{\mathbf{y}}\phi(\mathbf{y},\mathbf{d},\mathbf{x})  
&= \frac{(1- \rho(\mathbf{y}) \kappa(\mathbf{y}) )}{\Vert \mathbf{y}-\mathbf{x}\Vert (1-\kappa^2(\mathbf{y})^{3/2})} \frac{\mathbf{d}-\mathbf{x}}{\Vert \mathbf{d}-\mathbf{x} \Vert} \nonumber\\
&- \frac{1}{\Vert \mathbf{y}-\mathbf{x}\Vert \sqrt{1-\kappa^2(\mathbf{y})}} 
\left( \rho(\mathbf{y}) + \kappa(\mathbf{y}) \frac{1-\rho(\mathbf{y})\kappa(\mathbf{y})}{1-\kappa^2(\mathbf{y})}
\right) \frac{\mathbf{y}-\mathbf{x}}{\Vert\mathbf{y}-\mathbf{x}\Vert}.
\end{eqnarray*}

After computations, we observe that $\nabla_{\mathbf{y}} \Psi \cdot (\mathbf{y} - \mathbf{x})$ leads to
$$
\nabla_{\mathbf{y}} \psi(\mathbf{y},\mathbf{x},\mathbf{s}) \cdot (\mathbf{y} - \mathbf{x})= 0
$$
while 
$$
\nabla_{\mathbf{y}} \cos \left( \cot^{-1} \phi(\mathbf{y},\mathbf{d},\mathbf{x})\right)\cdot (\mathbf{y} - \mathbf{x}) = - \frac{\rho(\mathbf{y})}{\sqrt{1-\kappa^2(\mathbf{y})}}.
$$
This latter component is then never zero if $\rho(\mathbf{y}) \neq 0$ or equivalently when $\mathbf{y} \neq \mathbf{x}$ which corresponds to our construction of $\Omega_2$.
Therefore, $(\nabla_\mathbf{y}\Psi,\nabla_\mathbf{x}\Psi)\neq\mathbf{0}$  and $\Psi$ defines a phase.

To interpret $\mathcal{L}_2$ as a FIO, we embed the variable $(\mathbf{x},\mathbf{y})$ into $\mathbf{z} \in \Omega_2$. Taking $\bar{f}(\mathbf{z}) = f(\mathbf{x}) f(\mathbf{y})$, it yields
\begin{eqnarray*}
\mathcal{L}_2(\bar{f},n_e)(\lambda,\mathbf{d},\mathbf{s}) 
&=& \int_{\Omega_2} \mathcal{W}_2(n_e)(\mathbf{z},\mathbf{d},\mathbf{s})
\bar{f}(\mathbf{z}) \delta(\lambda - \bar{\Psi}(\mathbf{z},\mathbf{d},\mathbf{s})) \mathrm{d}\mathbf{z}\\
&=& \frac{1}{\sqrt{2\pi}}\int_{\Omega_2} \int_\mathbb{R} \mathcal{W}_2(n_e)(\mathbf{z},\mathbf{d},\mathbf{s})
\bar{f}(\mathbf{z}) e^{-i\sigma(\lambda - \bar{\Psi}(\mathbf{z},\mathbf{d},\mathbf{s}))} \mathrm{d}\sigma \mathrm{d}\mathbf{z}\\
\end{eqnarray*}
with $\bar{\Psi}$ the appropriate embedded version of $\Psi$ which inherits its properties of phase.

Regarding the symbol, the definition of $\mathcal{L}_2$ in eq. (\ref{eq:def_L2}) leads to the symbol
$$
\mathcal{W}_2 (n_e)(\mathbf{x},\mathbf{y},\mathbf{d},\mathbf{s}) = \Vert\nabla_\mathbf{\mathbf{x},\mathbf{y}} {\Psi}(\mathbf{y},\mathbf{x},\mathbf{d},\mathbf{s}) \Vert  A_{E_0}(\mathbf{s},\mathbf{x}) \cdot A_{E_{\omega_1}}(\mathbf{x},\mathbf{y}) \cdot A_{E_{\omega_2}}(\mathbf{y},\mathbf{d}).
$$
The phase $\bar{\Psi}$ is a smooth function with non-vanishing derivatives and since $n_e$ is assumed to be $C^\infty$ smooth and compactly supported, its integrals along the traveling paths, $\mathbf{s}\to\mathbf{x}\to\mathbf{y}\to\mathbf{d}$, are also $C^\infty$. As for $\mathcal{L}_1$, the symbol is similar to the one of the attenuated Radon transforms, see \cite{Kuchment}. Therefore, the symbol of $\mathcal{L}_2$ satisfies eq. (\ref{eq:cond_amplitude}) and is of order 0.
Given Theorem \ref{theo:Hormander}, $\mathcal{L}_2$ is a FIO of order 
$$
m = \frac{1}{2} - \frac{6+3}{4} = -\frac{7}{4}
$$ 
which ends the proof.
\end{proof}

It follows that the approximation of the second-order scattering $g_2$, $\mathcal{L}_2$, is substantially smoother than the approximation to the first-order, $\mathcal{L}_1$. This result is exploited to extract the contours of $f$ ($L_2$-approximation of $n_e$) using reconstruction techniques built to solve $g_1 = \mathcal{T}(n_e)$ in Section \ref{sec:simu_MC}. We now give more details about the smoothness properties using Sobolev spaces. Theorems \ref{theo:Hormander} requires that the natural projections $\Pi_{\mathcal{L}_1}$ and $\Pi_{\mathcal{L}_2}$ are immersions. The property depends on the domain of definition $\mathbb{D}$ and will not be true for random sets of detectors.

In the following Lemma, we provide a condition for $\mathbb{D}$ when the source is fixed in order to get that $\Pi_{\mathcal{L}_1}$ is an immersion.

\begin{lemma}\label{lemma:immersion_sphere}
Let the source $\mathbf{s}$ be fixed and $\mathbf{d}$ be written as 
$$
\mathbf{d}(\alpha,\beta) = \mathbf{s} +  t(\alpha) 
\left(\begin{array}{c}
\sin \alpha \cos \beta\\ 
\sin \alpha \sin \beta\\ 
\cos \alpha
\end{array} \right)
\qquad (\alpha,\beta) \in [0,\alpha_{\mathrm{max}}] \times [0,2\pi].
$$
$\alpha_{\mathrm{max}}$ is typically set to $\pi$ or $\pi/2$. Let $\Omega \subset \mathbb{R}^3$ be supported strictly inside the convex domain generated by the surface $\mathbb{D}$ such that
\begin{equation}\label{eq:condition_immersion}
(\mathbf{d}-\mathbf{s}) \cdot (\mathbf{d} - \mathbf{x}) \neq 0, \quad \forall \ (\mathbf{d},\mathbf{x}) \in \mathbb{D}\times \Omega. 
\end{equation}
If for all $\alpha \in  [0,\alpha_{\mathrm{max}}]$ and $\mathbf{x} \in \Omega$ it holds
$$
 \Vert \mathbf{x} - \mathbf{s}\Vert \neq t(\alpha) \kappa(\mathbf{x}) + \partial_\alpha \kappa(\mathbf{x}) \partial_\alpha t(\alpha)
\quad \text{with} \quad 
\partial_\alpha \kappa(\mathbf{x}) =  \frac{\mathbf{x}-\mathbf{s}}{\Vert \mathbf{x}-\mathbf{s}\Vert} \cdot
\left(\begin{array}{c}
\cos \alpha \cos \beta\\ 
\cos \alpha \sin \beta\\ 
-\sin \alpha
\end{array} \right) ,
$$
with $\kappa$ defined in eq. (\ref{eq:def_kappa_rho}), then, 
\begin{equation}\label{eq:jacobian}
h(\mathbf{x},\mathbf{d}):=\mathrm{det}(\nabla_\mathbf{x} \phi, \partial_\alpha \nabla_\mathbf{x} \phi, \partial_\beta \nabla_\mathbf{x} \phi)(\mathbf{x},\mathbf{d},\mathbf{s}) \neq 0, \quad \forall \ (\mathbf{x},\mathbf{d})\in \Omega \times \mathbb{D}.
\end{equation}
\end{lemma}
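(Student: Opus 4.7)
The plan is to compute the determinant $h(\mathbf{x},\mathbf{d})$ explicitly in an adapted moving frame and factor it into a geometric prefactor times the quantity singled out in the hypothesis.

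First I would introduce $\mathbf{u}(\alpha,\beta)=(\sin\alpha\cos\beta,\sin\alpha\sin\beta,\cos\alpha)^T$, so that $\mathbf{d}-\mathbf{s}=t(\alpha)\mathbf{u}$, together with $\mathbf{u}_\alpha=\partial_\alpha\mathbf{u}$ and $\mathbf{u}_\beta=\partial_\beta\mathbf{u}$. These form an orthogonal basis of $\mathbb{R}^3$ with $\det(\mathbf{u},\mathbf{u}_\alpha,\mathbf{u}_\beta)=\sin\alpha$. Since the unit vector $\mathbf{v}:=(\mathbf{x}-\mathbf{s})/r$, where $r:=\|\mathbf{x}-\mathbf{s}\|$, is $(\alpha,\beta)$-independent, it expands as $\mathbf{v}=\kappa\,\mathbf{u}+\kappa_\alpha\,\mathbf{u}_\alpha+(\kappa_\beta/\sin^2\alpha)\,\mathbf{u}_\beta$ with $\kappa_\bullet=\partial_\bullet\kappa$, and $\|\mathbf{v}\|=1$ yields the key identity $\kappa^2+\kappa_\alpha^2+\kappa_\beta^2/\sin^2\alpha=1$.

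Next, from formula (\ref{eq:gradient_phase_phi}) I would write $\nabla_\mathbf{x}\phi=A(\kappa,\rho)\,\mathbf{u}+B(\kappa,\rho)\,\mathbf{v}$ with $r$ a constant parameter, differentiate this decomposition in $\alpha$ and $\beta$ using $\partial_\bullet\mathbf{v}=0$, $\partial_\alpha\rho=-\rho\,t_\alpha/t$, $\partial_\beta\rho=0$, and expand each of the three columns of $M=[\nabla_\mathbf{x}\phi\mid\partial_\alpha\nabla_\mathbf{x}\phi\mid\partial_\beta\nabla_\mathbf{x}\phi]$ in the basis $\{\mathbf{u},\mathbf{u}_\alpha,\mathbf{u}_\beta\}$ after substituting the expansion of $\mathbf{v}$. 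A direct cofactor expansion, after systematic cancellation of cross-terms (the pure $\mathbf{v}$-contributions to the $\mathbf{u}_\alpha$- and $\mathbf{u}_\beta$-rows become proportional across all columns and therefore vanish), collapses the determinant to
\[
\det(M)=A\bigl[A(A+B\kappa)+\kappa_\alpha(AB_\alpha-A_\alpha B)+(\kappa_\beta/\sin^2\alpha)(AB_\beta-A_\beta B)\bigr].
\]
Substituting $A_\bullet=A_\kappa\kappa_\bullet+A_\rho\rho_\bullet$, $B_\bullet=B_\kappa\kappa_\bullet+B_\rho\rho_\bullet$ and invoking the key identity reduces the bracket to $A(A+B\kappa)+(1-\kappa^2)(AB_\kappa-A_\kappa B)+\kappa_\alpha\rho_\alpha(AB_\rho-A_\rho B)$.

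Finally, I would plug in the explicit forms of $A,B$ to evaluate the three Wronskian-like quantities; these turn out to be clean rational expressions in $(\kappa,\rho,r)$, and assembling them while using $\rho=r/t$ and $\rho_\alpha=-\rho t_\alpha/t$ yields the factorization
\[
h(\mathbf{x},\mathbf{d})=\sin\alpha\cdot\det(M)=\frac{-\rho(1-\rho\kappa)\sin\alpha}{r^3\,t\,(1-\kappa^2)^{7/2}}\bigl[r-t\kappa-\kappa_\alpha t_\alpha\bigr].
\]
Each piece of the prefactor is nonzero by the assumptions: $\kappa^2<1$ since the primary and backscattered tori are excluded, $\sin\alpha\neq 0$ in the interior of the parameter range, $\rho,r,t>0$, and $1-\rho\kappa\ne 0$ is exactly hypothesis (\ref{eq:condition_immersion}) rewritten as $(\mathbf{d}-\mathbf{s})\cdot(\mathbf{d}-\mathbf{x})\ne 0$; the second hypothesis of the lemma removes the remaining bracket. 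The main obstacle will be the algebraic bookkeeping: three differentiated columns mixing four frame directions generate many a priori terms, and the clean factorization survives only because of the special structure of $A$ and $B$ in (\ref{eq:gradient_phase_phi}) together with repeated use of the unit-norm identity to eliminate the $\kappa_\beta$-dependence in favour of $1-\kappa^2$.
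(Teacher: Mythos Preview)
Your computation is correct and yields the same two vanishing conditions as the paper, but the route is genuinely different. The paper first applies a rotation $R_{\mathbf{x}}$ sending $(\mathbf{x}-\mathbf{s})/\|\mathbf{x}-\mathbf{s}\|$ to $e_z$, re-expresses $\mathbf{d}-\mathbf{s}$ in \emph{new} spherical angles $(\bar\alpha,\bar\beta)$ so that $\kappa=\cos\bar\alpha$, and then computes $\det(\bar\phi_{\mathbf{x}},\partial_{\bar\alpha}\bar\phi_{\mathbf{x}},\partial_{\bar\beta}\bar\phi_{\mathbf{x}})$ directly from its very short column formulas; this gives the factor $(r\cos\bar\alpha-t)(r+t_{\bar\alpha}\sin\bar\alpha-t\cos\bar\alpha)/(t^3\sin^6\bar\alpha)$, and the two hypotheses are then read off after translating $t_{\bar\alpha}$ back to the original $t_\alpha$ via the chain rule. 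You instead stay in the original $(\alpha,\beta)$ parametrization, expand everything in the moving frame $\{\mathbf{u},\mathbf{u}_\alpha,\mathbf{u}_\beta\}$, and let the unit-norm identity $\kappa^2+\kappa_\alpha^2+\kappa_\beta^2/\sin^2\alpha=1$ together with the Wronskian structure $AB_\bullet-A_\bullet B$ collapse the algebra; this produces the original $h$ directly, including the explicit $\sin\alpha$ prefactor. The trade-off: the paper's rotation makes each column almost trivial to write down but tacitly replaces $(\partial_\alpha,\partial_\beta)$ by $(\partial_{\bar\alpha},\partial_{\bar\beta})$, so strictly speaking one still owes the nonvanishing of the Jacobian of $(\alpha,\beta)\mapsto(\bar\alpha,\bar\beta)$; your approach avoids that reparametrization entirely at the price of heavier bookkeeping, and makes the degeneracy at $\sin\alpha=0$ (a coordinate singularity of the detector parametrization, not of the geometry) visible in the formula.
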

\begin{proof}
See Appendix
\end{proof}

The property
$$
 \Vert \mathbf{x} - \mathbf{s}\Vert \neq t(\alpha) \kappa(\mathbf{x}) + \partial_\alpha \kappa(\mathbf{x}) \partial_\alpha t(\alpha)
$$
is obviously satisfied when $t(\alpha)$ is constant. 
For, $t(\alpha) = \cos \alpha$, the condition leads, after computations, to the condition $\mathbf{x}\in\mathbb{D}$ which is excluded. 

This condition along with equation (\ref{eq:condition_immersion}) delivers an important information on how to build the scanner and where to locate the object under study. We discuss the consequences in Section \ref{sec:simu_MC}.

\begin{corollary}\label{theo:smoothness_L1} Under the conditions stated in Lemma  \ref{lemma:immersion_sphere}, the operator $\mathcal{L}_1~:~H_c^\nu(\Omega)~\to ~ H^{\nu+1}_{loc}(\mathbb{R}\times\mathbb{D})$ is continuous.
\end{corollary}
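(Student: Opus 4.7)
The plan is to assemble the corollary as an almost immediate consequence of three earlier results: Theorem \ref{theo:FIO_L1}, Theorem \ref{theo:Hormander2}, and Theorem \ref{theo:Hormander}. The Sobolev shift $\nu \mapsto \nu+1$ comes directly from the order $k=-1$ of the operator, since Theorem \ref{theo:Hormander} gives continuity $H_c^\nu \to H_{loc}^{\nu-k}$.

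First I would invoke Theorem \ref{theo:FIO_L1} to recall that, with $\mathbf{s}$ fixed, $\mathcal{L}_1(\cdot,n_e) \in I^{-1}(\mathbb{R}\times \mathbb{D}, \Omega)$ is a Fourier integral operator of order $k=-1$, with phase
$$
\Phi(p,\mathbf{d},\mathbf{x},\sigma) = \sigma\bigl(p - \phi(\mathbf{x},\mathbf{d},\mathbf{s})\bigr)
$$
and canonical relation $C_{\mathcal{L}_1}$ as in Definition \ref{def:FIO}. Using the parametrization of $\mathbb{D}$ from Lemma \ref{lemma:immersion_sphere}, the parameter manifold $\Theta$ is the two-dimensional set described by $(\alpha,\beta)$, so Theorem \ref{theo:Hormander2} applies with $\theta_1 = \alpha$ and $\theta_2 = \beta$.

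Next I would verify the hypothesis of Theorem \ref{theo:Hormander}, namely that the natural projection $\Pi_L : C_{\mathcal{L}_1} \to \mathbb{R}\times \mathbb{D}$ is an immersion. By Theorem \ref{theo:Hormander2} this reduces to showing
$$
\det\bigl(\nabla_\mathbf{x}\phi,\; \partial_\alpha \nabla_\mathbf{x}\phi,\; \partial_\beta \nabla_\mathbf{x}\phi\bigr)(\mathbf{x},\mathbf{d},\mathbf{s}) \neq 0 \qquad \forall\,(\mathbf{x},\mathbf{d})\in \Omega \times \mathbb{D},
$$
which is precisely the conclusion \eqref{eq:jacobian} of Lemma \ref{lemma:immersion_sphere} under the assumptions on $\mathbb{D}$ and $t(\alpha)$ already being imposed in the statement of the corollary.

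Finally, Theorem \ref{theo:Hormander} applied with $k=-1$ yields continuity
$$
\mathcal{L}_1 \,:\, H_c^\nu(\Omega) \longrightarrow H_{loc}^{\nu-k}(\mathbb{R}\times \mathbb{D}) = H_{loc}^{\nu+1}(\mathbb{R}\times \mathbb{D}),
$$
which is the claim. The only delicate point is the immersion property, but that has already been absorbed into Lemma \ref{lemma:immersion_sphere}; hence the corollary itself is essentially a packaging step with no further obstacle.
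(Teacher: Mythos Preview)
Your proof is correct and follows exactly the same route as the paper: invoke Lemma \ref{lemma:immersion_sphere} to obtain the immersion property (via Theorem \ref{theo:Hormander2}), then combine Theorems \ref{theo:Hormander} and \ref{theo:FIO_L1} with $k=-1$. The paper's own proof is a one-sentence version of what you wrote.
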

\begin{proof}
According to Lemma \ref{lemma:immersion_sphere}, $\Phi$ is an immersion and thus $\mathcal{L}_1$ has the desired property using  Theorems \ref{theo:Hormander} and \ref{theo:FIO_L1}.
\end{proof}

\begin{corollary}\label{theo:smoothness_L2}
Let the source $\mathbf{s}$ be fixed and the vector $\mathbf{d} \in \mathbb{D}$ be written as 
$$
\mathbf{d}(\alpha,\beta) = \mathbf{s} + t(\alpha) 
\left(\begin{array}{c}
\sin \alpha \cos \beta\\ 
\sin \alpha \sin \beta\\ 
\cos \alpha
\end{array} \right)
\qquad (\alpha,\beta) \in [0,\alpha_{\mathrm{max}}] \times [0,2\pi].
$$
Let $\Omega$ be supported strictly inside the convex domain generated by the detector surface such that
\begin{description}
\item[(i)]
$
(\mathbf{d}-\mathbf{x}) \cdot (\mathbf{y} - \mathbf{x}) \neq 0,
$
\item[(ii)] 
$
 \Vert \mathbf{y} - \mathbf{x}\Vert \neq \Vert \mathbf{d} - \mathbf{x}\Vert \kappa(\mathbf{y}) + \partial_\alpha \kappa(\mathbf{y}) \partial_\alpha \Vert \mathbf{d} - \mathbf{x}\Vert 
$
\item[(iii)]
\begin{equation}\label{eq:condition_immersion_L2}
\nabla_{\mathbf{x},\mathbf{y}} \psi \notin \mathrm{span}(\partial_\alpha \nabla_{\mathbf{x},\mathbf{y}} \phi, \partial_\beta \nabla_{\mathbf{x},\mathbf{y}} \phi) - \frac{1}{(1+\phi^2)^{3/2}} \nabla_{\mathbf{x},\mathbf{y}} \phi
\end{equation}
for all $(\mathbf{x},\mathbf{y},\mathbf{d})\in\Omega_2 \times \mathbb{D}$
\end{description}
then $\mathcal{L}_2: H_c^\nu(\Omega_2) \to H^{\nu+7/4}_{loc}(\mathbb{R}\times\mathbb{D})$ is continuous. If (iii) is not satisfied,\\ then $\mathcal{L}_2: H_c^\nu(\Omega_2) \to H^{\nu+5/4}_{loc}(\mathbb{R}\times\mathbb{D})$ is continuous.
\end{corollary}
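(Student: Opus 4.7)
The strategy mirrors that of Corollary \ref{theo:smoothness_L1}: combine the FIO order statement of Theorem \ref{theo:FIO_L2} with the Sobolev continuity of Theorem \ref{theo:Hormander}. Since $\mathcal{L}_2\in I^{-7/4}(\mathbb{R}\times\mathbb{D},\Omega_2)$, Theorem \ref{theo:Hormander} yields $\mathcal{L}_2: H_c^\nu(\Omega_2)\to H_{loc}^{\nu+7/4}(\mathbb{R}\times\mathbb{D})$ provided the natural projection $\Pi_{\mathcal{L}_2}$ of the canonical relation is an immersion. Hence the entire task is to establish an analogue of Lemma \ref{lemma:immersion_sphere} for the embedded phase $\bar{\Psi}$, using hypotheses (i)--(iii) in place of the single inequality of that lemma.

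I would start by differentiating $\bar{\Psi}(\mathbf{z},\mathbf{d})=\psi(\mathbf{y},\mathbf{x},\mathbf{s})+\cos(\cot^{-1}\phi(\mathbf{y},\mathbf{x},\mathbf{d}))$ in $\mathbf{z}=(\mathbf{x},\mathbf{y})\in\Omega_2$. The chain rule applied as in eq. (\ref{eq:nabla_cos_phi}) yields
$$
\nabla_{\mathbf{z}}\bar{\Psi}\;=\;\nabla_{\mathbf{z}}\psi\;+\;\frac{\nabla_{\mathbf{z}}\phi}{(1+\phi^{2})^{3/2}},
$$
which identifies the vector appearing in condition (iii). The analogue of Theorem \ref{theo:Hormander2} reduces the immersion criterion to the linear independence of $\{\nabla_{\mathbf{z}}\bar{\Psi},\,\partial_\alpha\nabla_{\mathbf{z}}\bar{\Psi},\,\partial_\beta\nabla_{\mathbf{z}}\bar{\Psi}\}$ at every $(\mathbf{z},\mathbf{d})\in\Omega_2\times\mathbb{D}$. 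Conditions (i) and (ii) are the direct translations of the Lemma \ref{lemma:immersion_sphere} hypotheses to the toric component, with vertex $\mathbf{x}$ and detector $\mathbf{d}$: (i) secures the non-collinearity keeping $\phi$ smooth, and (ii) transposes the key Lemma \ref{lemma:immersion_sphere} inequality. Rerunning the computation of that lemma then shows that $\{\nabla_{\mathbf{z}}\phi,\partial_\alpha\nabla_{\mathbf{z}}\phi,\partial_\beta\nabla_{\mathbf{z}}\phi\}$ spans a three-plane, and condition (iii) states precisely that $\nabla_{\mathbf{z}}\bar{\Psi}$ is not absorbed into $\mathrm{span}(\partial_\alpha\nabla_{\mathbf{z}}\phi,\partial_\beta\nabla_{\mathbf{z}}\phi)$ after the chain-rule shift by $-(1+\phi^{2})^{-3/2}\nabla_{\mathbf{z}}\phi$. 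The immersion then holds, and Theorem \ref{theo:Hormander} delivers the first assertion.

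When (iii) fails while (i) and (ii) persist, the canonical relation acquires a Whitney-fold singularity along the codimension-one variety where the offending linear dependence occurs. Invoking the refined Sobolev continuity result for FIOs with fold canonical relations yields a Sobolev loss of $1/2$ relative to the non-degenerate case, so that $\mathcal{L}_2: H_c^\nu(\Omega_2)\to H_{loc}^{\nu+5/4}(\mathbb{R}\times\mathbb{D})$.

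The main difficulty I anticipate is the algebraic bookkeeping in dimension six: unlike Lemma \ref{lemma:immersion_sphere}, where a square determinant settled the matter, here one must verify that a three-vector family spans a three-plane inside $\mathbb{R}^{6}$ without the convenience of a single $6\times 6$ determinant. The computation must therefore rely on the intertwined structure of $\psi$ and $\phi$ encoded in eqs. (\ref{eq:nabla_psi})--(\ref{eq:nabla_cos_phi}) together with the gradient formula (\ref{eq:gradient_phase_phi}). Conditions (i)--(iii) have been engineered exactly for this bookkeeping, but confirming that they collectively imply the three-plane property is the technical heart of the proof; once it is in place, the Sobolev conclusions follow mechanically from Theorem \ref{theo:Hormander}.
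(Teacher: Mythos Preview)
Your proposal is essentially the same approach as the paper's: reduce to the immersion criterion of Theorem \ref{theo:Hormander2}, use that $\psi$ is independent of $\mathbf{d}$ so its $(\alpha,\beta)$-derivatives vanish, recycle Lemma \ref{lemma:immersion_sphere} via conditions (i)--(ii) to control the $\phi$-part, and read condition (iii) as the residual linear-independence requirement. Two minor differences are worth noting. First, the paper sidesteps the six-dimensional bookkeeping you flag as the main difficulty by restricting attention to the upper $3\times 3$ block $(\nabla_{\mathbf{x}}\Psi,\partial_\alpha\nabla_{\mathbf{x}}\Psi,\partial_\beta\nabla_{\mathbf{x}}\Psi)$, which already has rank three by Lemma \ref{lemma:immersion_sphere}; this avoids handling a $6\times 3$ system directly. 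Second, for the degenerate case the paper does not invoke Whitney-fold theory but simply argues that $(\partial_\alpha\nabla_{\mathbf{x},\mathbf{y}}\phi,\partial_\beta\nabla_{\mathbf{x},\mathbf{y}}\phi)$ remain independent, so the rank drops by exactly one, and then appeals to H\"ormander for the $1/2$ loss. Your fold formulation would require checking the additional second-order non-degeneracy of a genuine fold, which the paper does not attempt.
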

\begin{proof}
We need to prove that the matrix $(\nabla_{\mathbf{x},\mathbf{y}} \Psi,\partial_\alpha \nabla_{\mathbf{x},\mathbf{y}} \Psi, \partial_\beta \nabla_{\mathbf{x},\mathbf{y}} \Psi)$ is full-rank, \textit{i.e.} is of rank 3 if (i) and (ii) are satisfied, see Lemma \ref{lemma:immersion_sphere}. Due to the definition of the phase $\Psi$, we observe that the upper part of this matrix satisfies
$$
(\nabla_{\mathbf{x}} \Psi,\partial_\alpha \nabla_{\mathbf{x}} \Psi, \partial_\beta \nabla_{\mathbf{x}} \Psi)
=  (\nabla_{\mathbf{x}} \psi,0,0) + \frac{1}{(1+\phi^2)^{3/2}} (\nabla_{\mathbf{x}} \phi,\partial_\alpha \nabla_{\mathbf{x}} \phi, \partial_\beta \nabla_{\mathbf{x}} \phi).
$$
We know that $(\nabla_{\mathbf{x}} \phi,\partial_\alpha \nabla_{\mathbf{x}} \phi, \partial_\beta \nabla_{\mathbf{x}} \phi)$ is full rank. Therefore, the matrix $(\nabla_{\mathbf{x},\mathbf{y}} \Psi,\partial_\alpha \nabla_{\mathbf{x},\mathbf{y}} \Psi, \partial_\beta \nabla_{\mathbf{x},\mathbf{y}} \Psi)$  is full rank if and only if equation (\ref{eq:condition_immersion_L2}) is satisfied. In this case, $\Pi_{\mathcal{L}_2}:C_{\mathcal{L}_2} \to \mathbb{R}\times \mathbb{D}$ is an immersion and the desired result follows when combining Theorems \ref{theo:Hormander} and \ref{theo:FIO_L2}. If equation (\ref{eq:condition_immersion_L2}) is not satisfied then the smoothness propertes can drop. Since the vectors $(\partial_\alpha \nabla_{\mathbf{x},\mathbf{y}} \phi, \partial_\beta \nabla_{\mathbf{x},\mathbf{y}} \phi)$ are linearly independant, $\mathrm{dim}(\mathrm{ker}(\Pi_{\mathcal{L}_2})) = 1$ and following from \cite{Hormander} the smoothness properties decreases of a degree $\frac12$ which ends the proof.
\end{proof}

Conditions (i) and (ii) are satisfied when the object is rather small and close from the center of $\mathbb{D}$. Therefore, the continuity of $\mathcal{L}_2$ here depends essentially on the condition given by eq. (\ref{eq:condition_immersion_L2}). The condition is difficult to verify numerically due to the size of the vectors (these are elements of $\mathbb{R}^6$) but dismisses \textit{only} a two-dimensional surface within a six-dimensional space. In the worst case ((iii) not satisfied), $\mathcal{L}_2$ remains smoother than $\mathcal{L}_1$ but only by an order $\frac14$.

\subsection{Overview of the analysis}

The model operator $\mathcal{T}$ and $\mathcal{S}$ describing the measurement process for $g_1$ and $g_2$ can be seen as approximations of the FIO $\mathcal{L}_1$ and $\mathcal{L}_2$ assuming the sought-for electron density $n_e$ is smooth. While $\mathcal{L}_1$ is of order -1, $\mathcal{L}_2$ is of order $-\frac74$, at least $-\frac54$, quantifying the degree of integration involved by $g_1$ and $g_2$. This result indicates that $\mathcal{L}_2$ smoothes significantly more than $\mathcal{L}_1$ and then that this latter brings a richer information about the singularities. While the study of the wave-front sets will be the subject of further research, this motivates the development of reconstruction techniques focused on $g_1$ and using appropriate differentiation orders in order to discard $g_2$. This is proposed in the following section.

We conclude this section by stating an intuitive conjecture regarding the extension to the whole spectrum, \textit{i.e.} to all scattering of higher orders.  By analogy to our derivations in Section \ref{sec:model}, the scattering of order $k$ ($k>2$) will rely on the relation 
$$
\sum_{i=1}^k \cos \omega_i = k - mc^2 \left( \frac{1}{E} - \frac{1}{E_0} \right)
$$
with $\omega_i$ the $i^{th}$ scattering angle. Unfortunately, the geometry of the scattering events becomes harder to model or even to implement. However, we expect the principle we developed for the second scattering to extend to higher orders since each additional scattering will add a new \textit{layer} of intermediary sources. This extension is expressed in the following statement. 

\begin{hypothesis}\label{hypo}
When focusing on the Compton scattering, the measured spectrum can be decomposed into the different scattered radiation of $i^{th}$ order,
$$
\mathrm{Spec}(E,\mathbf{d},\mathbf{s}) = \sum_{i=1}^\infty g_i(E,\mathbf{d},\mathbf{s})
$$
with $g_i$ approximated by the FIO $\mathcal{L}_i (\cdot,n_e)$ of order $-(3i+1)/4$. Intuitively, the more scattering events, the smoother the corresponding part within the spectrum is. 
\end{hypothesis}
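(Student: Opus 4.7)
The plan is to mimic the two-step strategy already carried out for $g_1$ and $g_2$: first derive an integral representation for $g_k$ by iterating the "a scattering point is a new polychromatic source" construction used in Section~\ref{sec:model}, and then show the resulting non-linear operator is pointwise approximated (in the sense of Theorems~\ref{theo:continuity_T}--\ref{theo:continuity_S}) by a linear operator $\mathcal{L}_k$ that is an FIO of the advertised order. The order count itself is then purely dimensional and matches the known cases $k=1,2$.

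First I would build $g_k$ inductively. A photon reaching $\mathbf{d}$ with energy $E$ after $k$ scatterings visits successive sites $\mathbf{x}_1,\dots,\mathbf{x}_k$ with scattering angles $\omega_1,\dots,\omega_k$ subject to $\sum_{i=1}^k \cos\omega_i = k - mc^2(1/E - 1/E_0)$. Extending eq.~(\ref{eq:nb_photon_Compton}) by a product of transport factors $A_{E_{\omega_{i-1}}}(\mathbf{x}_{i-1},\mathbf{x}_i)$ (with $\mathbf{x}_0=\mathbf{s}$, $\mathbf{x}_{k+1}=\mathbf{d}$), a product of Klein--Nishina probabilities $P(\omega_i)$, and the densities $n_e(\mathbf{x}_1)\cdots n_e(\mathbf{x}_k)$, one obtains a $3k$-fold integral over $\Omega^k$. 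Introducing the characteristic function
$$
\Psi_k(\mathbf{x}_1,\dots,\mathbf{x}_k,\mathbf{d},\mathbf{s}) := \psi(\mathbf{x}_2,\mathbf{x}_1,\mathbf{s}) + \sum_{i=2}^{k-1} \psi(\mathbf{x}_{i+1},\mathbf{x}_i,\mathbf{x}_{i-1}) + \cos(\cot^{-1}\phi(\mathbf{x}_k,\mathbf{d},\mathbf{x}_{k-1}))
$$
with $\psi,\phi$ as in eqs.~(\ref{eq:phase_phi})--(\ref{eq:phase_cone}), Theorem~\ref{theo:integration_dirac} turns the energy constraint into a factor $\delta(\lambda(E) - \Psi_k)$, giving the representation $g_k = \mathcal{S}_k(n_e)$ for a suitable weight $\mathcal{W}_k(n_e)$ depending smoothly on $n_e$.

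Next I linearize: define
$$
\mathcal{L}_k(f,n_e)(\lambda,\mathbf{d},\mathbf{s}) = \int_{\Omega_k} \mathcal{W}_k(n_e) \, f(\mathbf{x}_1)\cdots f(\mathbf{x}_k)\, \delta(\lambda - \Psi_k)\, \mathrm{d}\mathbf{x}_1\cdots \mathrm{d}\mathbf{x}_k,
$$
on the open set $\Omega_k \subset \Omega^k$ obtained by excluding the neighborhoods where two consecutive $\mathbf{x}_i$ coincide or are aligned with the preceding source point (the obstruction already encountered in the construction of $\Omega_2$ in Theorem~\ref{theo:FIO_L2}). The approximation $\mathcal{S}_k(n_e)\approx \mathcal{L}_k(f,n_e)$ for $\|f-n_e\|_{L_2}\ll 1$ follows by the same telescoping/Lipschitz argument used in the proof of Theorem~\ref{theo:continuity_T}, now applied to the product $f(\mathbf{x}_1)\cdots f(\mathbf{x}_k)-n_e(\mathbf{x}_1)\cdots n_e(\mathbf{x}_k)$, provided $f\in L_2\cap L_\infty$. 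Passing to the Fourier representation of $\delta$ one writes
$$
\mathcal{L}_k(\bar f, n_e) = \frac{1}{\sqrt{2\pi}}\int_{\Omega_k}\int_{\mathbb{R}} e^{i\sigma(\lambda-\Psi_k)} \mathcal{W}_k(n_e)\,\bar f(\mathbf{z})\,\mathrm{d}\sigma\,\mathrm{d}\mathbf{z},
$$
with $\bar f(\mathbf{z}) = f(\mathbf{x}_1)\cdots f(\mathbf{x}_k)$ and $\mathbf{z}\in\Omega_k \subset \mathbb{R}^{3k}$. Once $\Psi_k$ is shown to be a phase in the sense of Definition~\ref{def:FIO} and $\mathcal{W}_k(n_e)$ to be an amplitude of order $0$, Definition~\ref{def:FIO} yields an FIO of order
$$
0 - \frac{n+m-2}{4} = -\frac{3k+3-2}{4} = -\frac{3k+1}{4},
$$
since $n = 3k$ and $m = 3$ (data space $\mathbb{R}\times\mathbb{D}$). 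The symbol estimate reduces as in Theorems~\ref{theo:FIO_L1}--\ref{theo:FIO_L2} to smoothness of the Beer--Lambert factors along the polygonal path $\mathbf{s}\to\mathbf{x}_1\to\cdots\to\mathbf{x}_k\to\mathbf{d}$ and of the Jacobian $\|\nabla_{\mathbf{x}_1,\dots,\mathbf{x}_k}\Psi_k\|$, both guaranteed away from the excluded alignment set.

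The main obstacle is verifying the phase conditions for $\Psi_k$: homogeneity in $\sigma$ is automatic, but the non-vanishing of $(\partial_{\lambda,\mathbf{d}}\Phi_k,\partial_\sigma\Phi_k)$ and $(\partial_{\mathbf{x}_1,\dots,\mathbf{x}_k}\Phi_k,\partial_\sigma\Phi_k)$ as well as non-degeneracy require controlling the cascade of gradients of $\psi(\mathbf{x}_{i+1},\mathbf{x}_i,\mathbf{x}_{i-1})$ with respect to the shared variable $\mathbf{x}_i$, where two consecutive terms interact. The calculation in eqs.~(\ref{eq:nabla_psi})--(\ref{eq:nabla_cos_phi}) indicates that the tangential component along $\mathbf{x}_{i+1}-\mathbf{x}_i$ vanishes identically, so non-degeneracy must be read off from the radial component $-\rho(\mathbf{x}_{i+1})/\sqrt{1-\kappa^2(\mathbf{x}_{i+1})}$, which is nonzero precisely on the domain $\Omega_k$ constructed above. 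A secondary difficulty, which I would leave to future work, is the analogue of Corollary~\ref{theo:smoothness_L2}: the immersion condition on the natural projection $\Pi_{\mathcal{L}_k}$ becomes a system of $3k$ linear dependencies, and for $k\ge 3$ one expects a loss of at most $1/2$ in Sobolev order whenever a higher-codimension locus is hit, by analogy with condition (iii) of Corollary~\ref{theo:smoothness_L2}.
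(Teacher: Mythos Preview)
The statement you are addressing is explicitly labeled \emph{Conjecture} in the paper, and the paper provides \emph{no proof} for it. The surrounding text says only that ``the geometry of the scattering events becomes harder to model or even to implement'' and that the authors ``expect the principle we developed for the second scattering to extend to higher orders since each additional scattering will add a new \textit{layer} of intermediary sources.'' There is therefore no paper proof to compare your proposal against.

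That said, your outline is precisely the natural extrapolation the paper invites: you iterate the construction of Section~\ref{sec:model} to obtain a $3k$-dimensional integral, linearize into $\mathcal{L}_k$ acting on $\bar f(\mathbf{z})=f(\mathbf{x}_1)\cdots f(\mathbf{x}_k)$, and read off the FIO order from Definition~\ref{def:FIO} as $-(3k+1)/4$, which is consistent with the established cases $k=1$ (order $-1$) and $k=2$ (order $-7/4$). The obstacles you flag---non-degeneracy of the phase $\Psi_k$ when consecutive terms share a scattering site, and the immersion condition for the Sobolev continuity corollary---are exactly the reasons the paper refrains from claiming a proof. In particular, for $k\ge 3$ each intermediate variable $\mathbf{x}_i$ appears in \emph{three} of the summands defining $\Psi_k$ (as third, second, and first argument of successive $\psi$-terms), not just two as your sketch suggests, so the radial-component argument borrowed from eqs.~(\ref{eq:nabla_psi})--(\ref{eq:nabla_cos_phi}) would need to control a genuinely more intricate cancellation. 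Your proposal is a reasonable research plan, but you should present it as such rather than as a proof.
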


\section{Contour reconstruction and simulation results}\label{sec:simu_MC}

Solving the inverse problem 
$$
(\mathcal{T}+\mathcal{S})(n_e) = (g_1 + g_2 + \eta) =: \mathrm{Spec},
$$
is very challenging due to the nonlinearity with respect to $n_e$ but also the complexity of $\mathcal{S}$, see Section \ref{sec:model}.  Non-linear iterative techniques such as the Landweber iteration or the Kaczmarz's method could provide satisfactory reconstruction of $n_e$ but at the cost of a tremendous computation time. Deep learning techniques could be used but the lack of large dataset prevents any training step. In this section, we propose to circumvent these two issues by,
\begin{itemize}
\item first approximating $\mathcal{T}(n_e)$ and $\mathcal{S}(n_e)$ by $\mathcal{L}_1(f,n_e)$ and $\mathcal{L}_2 (f,n_e)$ respectively,
\item then focusing on the inverse problem to find $f$ from $\mathcal{L}_1(f,n_e) = g_1$ which provides a richer information than the second-order scattering, approximated by the inverse problem $\mathcal{L}_2 (f,n_e) = g_2$, see Section \ref{sec:smooth}.  
\end{itemize}
This approach will be valid when $n_e \in C^\infty(\Omega)$ and $f\in L_2(\Omega)$ such that $n_e$ stands for an approximation of $f$. A first strategy was proposed in \cite{RH_2018} for extracting the contours of $f$ from $g_1= \mathcal{L}_1(f,n_e)$ using a filtered-backprojection-type formula. The results derived in Section \ref{sec:smooth} suggests that the same approach could be applied in the more realistic case of multiple scattering. We propose thus to apply on the spectrum, $\mathrm{Spec}$, approximated by $(\mathcal{L}_1 + \mathcal{L}_2)(f,n_e)$, the filtered-backprojection type inversion formula 

\begin{figure}[!t]\centering
\includegraphics[width=\linewidth]{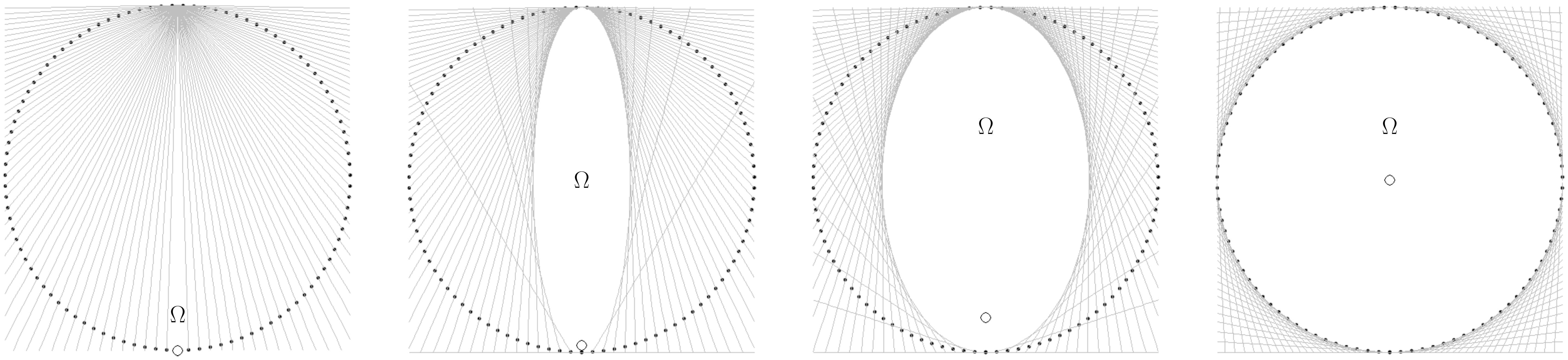}
\caption{Allowed support of the object, $\Omega$, for different positions of the source w.r.t. to the sphere of detectors, $\mathbb{D}$, here on a slice. The black dots depict the different detectors, the small circle is the source, while the gray lines represent the conditions in eq. (\ref{eq:condition_immersion}). 
\label{fig:immersion_source_sphere}}
\end{figure}

\begin{equation}\label{eq:FBP_CSI}
\tilde{f} = \mathcal{B} \partial_p^2 (\mathrm{Spec})
\end{equation}
with $\partial_p^2$ the second derivative w.r.t the first variable and the backprojection operator
$$
\mathcal{B} g(\mathbf{x})  = \int_{\mathbb{D}} \left(  \mathcal{W}_1(n_e^{prior}) (\mathbf{x};\phi(\mathbf{x},\mathbf{d},\mathbf{s}),\mathbf{d},\mathbf{s}) \right)^{-1} \; h(\mathbf{x},\mathbf{d}) g(\phi(\mathbf{x},\mathbf{d},\mathbf{s}),\mathbf{d}) \mathrm{d} \mathbf{d}
$$
in which 
$$
h(\mathbf{x},\mathbf{d}) = \left\vert \mathrm{det} \left(
\begin{array}{c}
\nabla_\mathbf{x} \phi(\mathbf{x},\mathbf{d},\mathbf{s}) \\
\partial_{\theta_1}\nabla_\mathbf{x} \phi(\mathbf{x},\mathbf{d},\mathbf{s})\\
\partial_{\theta_2}\nabla_\mathbf{x} \phi(\mathbf{x},\mathbf{d},\mathbf{s})
\end{array}
\right)
\right\vert
$$
and $n_e^{prior}$ stands for a prior knowledge on the electron density $n_e$. Equivalently, the extraction of the contours of $f$, noted $f_c$, is then achieved by the formula
\begin{equation}\label{eq:nabla_FBP_CSI}
\tilde{f}_c = \nabla_\mathbf{x} \mathcal{B} \partial_p^2 (\mathrm{Spec}).
\end{equation}
We note that $\mathcal{B}$ corresponds to the dual operator of $\mathcal{L}_1$ with respect to $f$ and on the suited weighted $L_2$ space. The formula requires that $h(\mathbf{x},\mathbf{d})$ never vanishes. This condition is given by eq. (\ref{eq:condition_immersion}) in Lemma \ref{lemma:immersion_sphere}. This equation delivers a condition on the support $\Omega$ regarding the detector set $\mathbb{D}$. Allowed $\Omega$ for different position of sources are depicted in Figure \ref{fig:immersion_source_sphere} within the sphere $\mathbb{D}$. The choice for the architecture will constitute a compromise on the type of applications, typically the size of the objects and the size of the designed scanner. In the simulations, we consider the second case: the source is slightly shifted from the pole which allows a sufficiently large $\Omega$  in the center of the sphere. The fourth case, source at the center, maximizes the space for the object but since the source must be exterior to the object, it limitates solid objects to only a quarter of the sphere.

Equation (\ref{eq:FBP_CSI}) behaves as a FIO of order 1. Since $g_2$ is structurally smoother than $g_1$, it is expected that $\tilde{f}$ will carry the singularity of order 0 of $f$ up to singularities of lesser orders arising from $g_2$ and the rest of measurement $\eta$. This intuition will be the core of future researches to determine properly how are encoded and reconstructed the singularities of $f$.

Following on from \cite{RH_2018}, eq. (\ref{eq:FBP_CSI}) requires the weight $w_1 = \mathcal{W}_1(n_e)$ (and thus the electron density $n_e$) to be known to recover accurately the contours of $f$. To perform the reconstruction, we consider an approximation of $n_e$ as prior information (obtained from previous experiments for instance). The functions $f, n_e$ and $n_e^{prior}$ used for our simulations are depicted in Figure \ref{fig:inputs}. The analysis of the reconstruction operator with inexact weights will be performed in future research. 

\begin{figure}[!t]\centering
\includegraphics[width=0.9\linewidth]{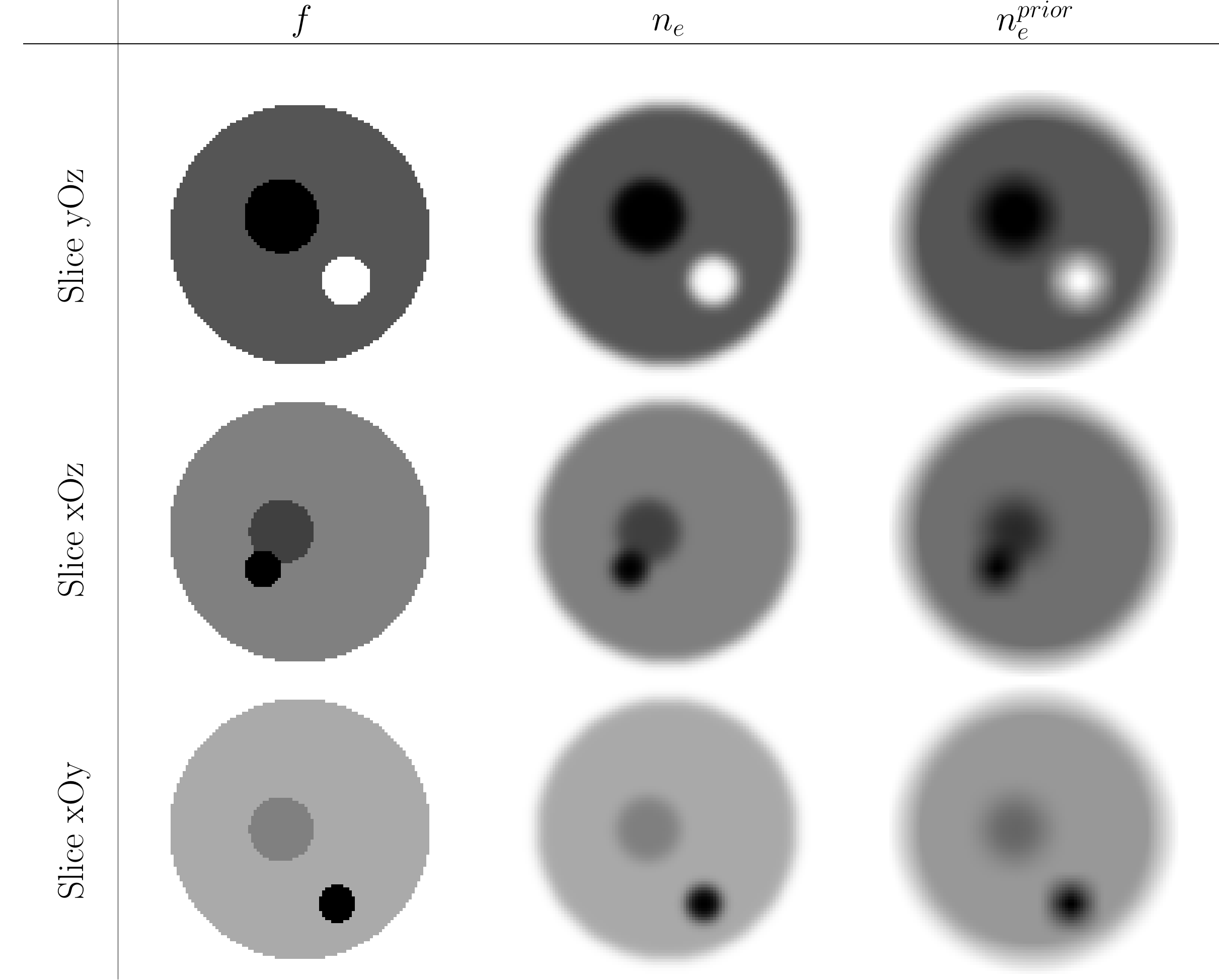}
\caption{Central slices of $f, n_e$ and $n_e^{prior}$.
\label{fig:inputs}}
\end{figure}

We now provide simulation results for the configuration given in Figure \ref{fig:config_CSI} in which the source has been slightly shifted from the pole as explained above. In the following, we assume that the detector has a continuous energy resolution. In practice, a fixed energy resolution will alter the accuracy of our integral representation for the back-scattered photons ($\omega>\pi/2$) leading to inconsistent data and thus limited data artifacts. This aspect constitutes the next step of our future research.

The parameters of the scanner are identical with the ones in Subsection \ref{ssec:MCPSF}. The toy object is a composition of spheres with electron densities $\{0,1,1.5,2,3\}\times n_w$ compactly supported in a cube of dimension 10x10x10cm$^3$. We implement the measurement under analytical form and Monte-Carlo simulation as well as the reconstruction technique we propose, see eqs. (\ref{eq:FBP_CSI}) and (\ref{eq:nabla_FBP_CSI}). The spectrum and its decomposition are depicted in Figure \ref{fig:dataMC_1_2_vs_ana}. In order to \textit{scan} uniformly the objects by the lemon and apple tori, and since we assume the detectors to measure continuously the spectrum, one can consider the mapping $\omega \to \tau$ defined by
$$
\tau = \Vert \mathbf{d} - \mathbf{s} \Vert \tan\left( \frac{\omega}{2} \right).
$$
This mapping corresponds to sample linearly the distance between the tori and the middle point $\mathbf{s}+\frac12(\mathbf{d} - \mathbf{s})$ which describes the \textit{size} of the apple or lemon tori. Without this property of the detectors, the back-scattered radiation ($\omega > \pi/2$) provides less reliable information and thus limited data issues.

\begin{figure}[!t]\centering
\includegraphics[width=0.8\linewidth]{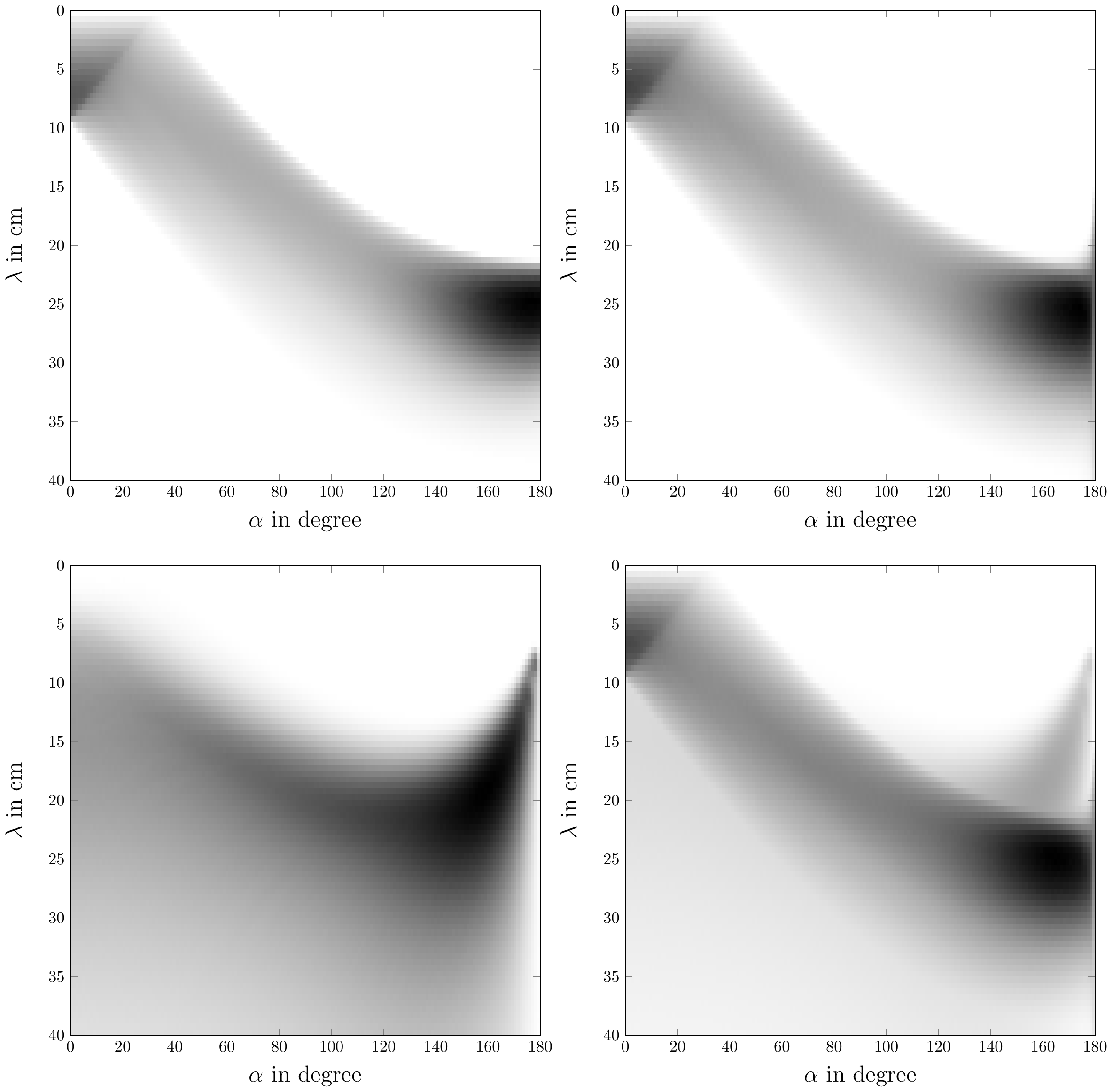}
\caption{Different parts of the spectrum from the electron density $n_e$ depicted in Figure \ref{fig:Rec_ne}. top left: implementation of the integral representation of $g_1$; top right: corresponding Monte-Carlo data $g_1^{MC}$; bottom left: $g_2^{MC}$ obtained by Monte-Carlo simulation; bottom right: $g_1^{MC}+g_2^{MC}$  \label{fig:dataMC_1_2_vs_ana}}
\end{figure}

In order to model the statistical nature of the emission and scattering of photons, we consider a Poisson process. This noise is characteristic for the emission of an ionising source and of scattering. It is characterized by the Poisson distribution
$$
Pr(x = k) = \frac{\varrho^k}{k!} e^{-\varrho}
$$
with $\varrho$ the average number of events per interval. Thus, when we consider noisy data, $\bar{g}$ will be replaced by $g^\epsilon$ where the values  $g_{jk}^\epsilon$ are drawn following the Poisson distribution with $\varrho = g_{jk}$. For the simulations, we have considered 0.5 \% noise obtained taking $I_0$ to $10^{11}$.

\begin{figure}[!t]\centering
\includegraphics[width=\linewidth]{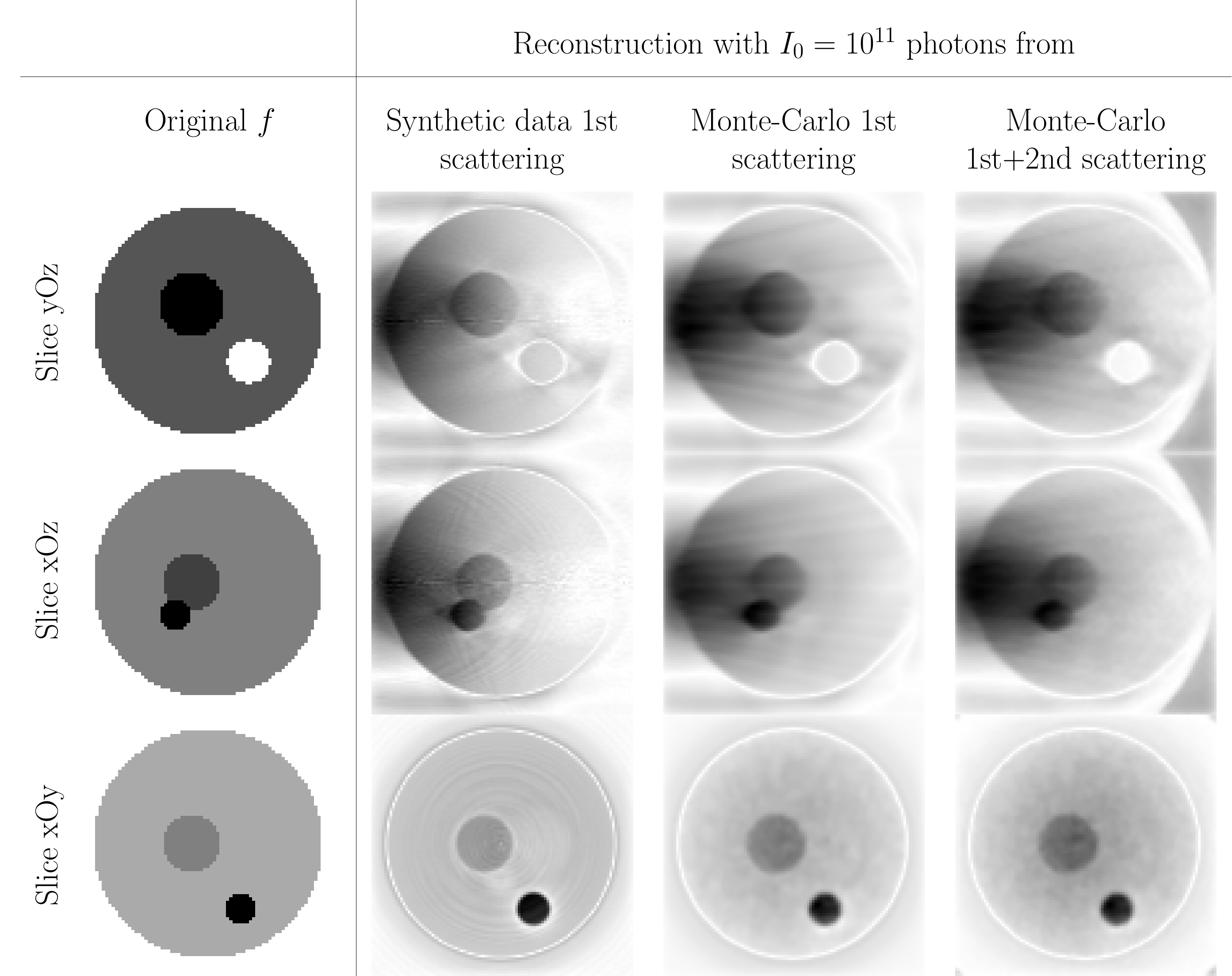}
\caption{first column: original electron density; Columns 2 to 4: Reconstruction from $g_1$, $g_1^{MC}$ and $g_1^{MC} + g_2^{MC}$ respectively using the reconstruction formula given in eq. (\ref{eq:FBP_CSI}). The level of noise in the Monte-Carlo data was set to 0.5\%. \label{fig:Rec_ne}}
\end{figure}

\begin{figure}[!t]\centering
\includegraphics[width=\linewidth]{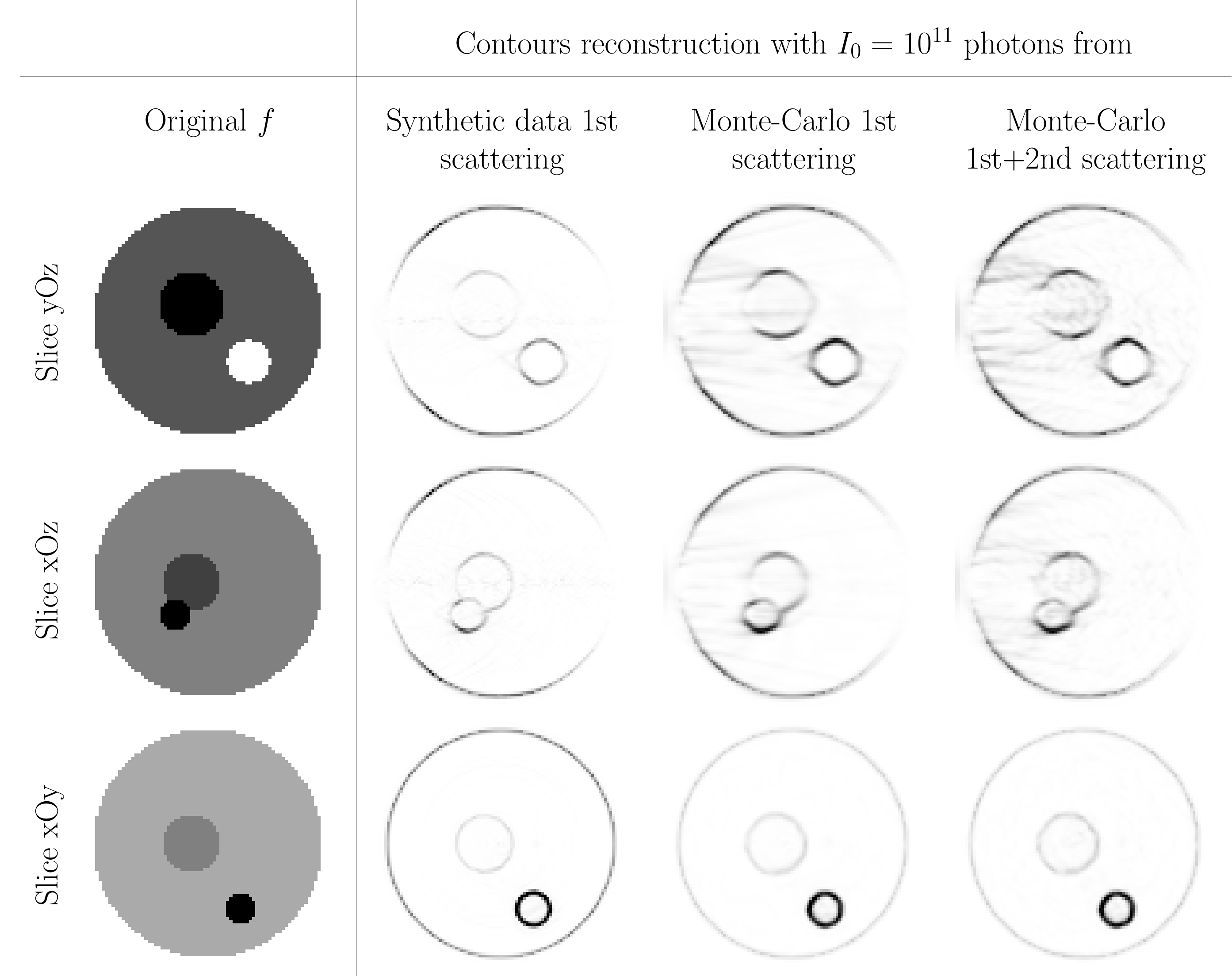}
\caption{Corresponding gradients to the reconstructions in Figure \ref{fig:Rec_ne} using eq. (\ref{eq:nabla_FBP_CSI}). \label{fig:Rec_contours_ne}}
\end{figure}

The reconstructions are displayed in Figure \ref{fig:Rec_ne}. As anticipated, the application of the filtered back-projection type algorithm does not provide a satisfactory reconstruction of the object. This is essentially due to the physical factors which alter substantially the integral kernel. They produce $C^\infty$-smooth but strong artifacts. By analogy with the attenuated Radon transform \cite{RG_IP_Sobolev_15}, the ill-conditioning of the reconstruction problem should increase exponentially with the intensity of the electron density which is observed in practice. The values of the electron density considered here as well as the variation of the physical factors are thus substantially amplified by the reconstruction strategy. 

However, the extraction of the contours, Figure \ref{fig:Rec_contours_ne}, enables a better visualization of the features of the object. Here, we simply take the gradient of the reconstructions from Figure \ref{fig:Rec_ne} but more sophisticated techniques could be applied. The second column displays the contours obtained from the analytical data for $g_1$. In this case, the contours are well-recovered. However, we can observe some artifacts arising from the variations of the physical factors. These artifacts turn out more prominent for Monte-Carlo data in the third and fourth columns. These can be explained by the smoothing effect of the acquisition, giving that the modeling itself is in reality not the torus but a strip around the torus giving smoother behaviour of the measurement than for the analytical modeling. Thence, the contours are smoothed making the artifacts more visible. Furthermore, as developed in this paper, the contours are encoded and preserved by the first-order scattered radiation and can be recovered even when considering the second-order scattering (fourth column). The key point here is that the second scattering is smoother than the first scattering. Consequently, the second derivative $\partial_p^2$ in the reconstruction scheme, highlights the variations of $g_1$ over $g_2$ and leads to a reconstruction almost not hindered by the second scattering. We expect the same behaviour for higher-ordered scattered radiation in the spectrum.

Due to the computation time of the data and of the Monte-Carlo simulations, we restricted the sampling of our toy object to 100x100x100 voxels which is small for recovering contours. We expect sharper reconstructions of the edges and less prominent artifacts for higher resolution of the data.

\section{Conclusion}

Restricting our study of the measured spectrum to the first- and second-order, we expressed the measurement of modalities on 3D Compton scattering imaging in terms of integral operators and approximated it under certain assumptions as Fourier integral operators. The study of these FIO delivered smoothness properties showing that the second-order scattered radiation provides smoother than the first-order scattered radiation. In consequence, the edges of the electron density are encoded essentially in the first-order part of the measured spectrum and can be recovered analytically using filtered backprojection techniques.

\section*{Appendix}

\subsection*{Proof of Theorem \ref{theo:model_scat2}}
The structure follows on from the physics of Compton scattering. Akin to the first scattering, the variation of photon scattered twice can be expressed as
$$
\mathrm{d}^2 g_2 = I_0 \left(\frac12 r_e\right)^4 P(\omega_1) P(\omega_2) A_{E_0}(\mathbf{s} , \mathbf{x}) A_{E_{\omega_1}}(\mathbf{x}, \mathbf{y}) A_{E_{\omega_2}}(\mathbf{y}, \mathbf{d}) n_e(\mathbf{x}) n_e(\mathbf{y}) \mathrm{d}\mathbf{x} \mathrm{d}\mathbf{y}.
$$
Therefore, ignoring some physical constants and based on our development above, one can integrate to get the theoretical number of photons detected at $\mathbf{d}$ with energy $E$ after two scattering events, 
$$
g_2(E,\mathbf{d},\mathbf{s}) = \underset{\Omega}{\int} \underset{\mathfrak{T}(\omega_2(\omega_1),\mathbf{x},\mathbf{d}) \cap \mathfrak{C}(\omega_1,\mathbf{x})}{\int} \hspace{-3em} A_{E_0}(\mathbf{s} , \mathbf{x}) A_{E_{\omega_1}}(\mathbf{x}, \mathbf{y}) A_{E_{\omega_2}}(\mathbf{y}, \mathbf{d}) n_e(\mathbf{x}) n_e(\mathbf{y}) \mathrm{d}\mathbf{y} \mathrm{d}\mathbf{x}.
$$
This intersection is characterized by eqs. (\ref{eq:coordinates_intersection}) and (\ref{eq:radius_intersection}). We still have to compute the differential form along the intersection and thus
$$
\Vert \partial_{\omega_1} \mathbf{y}_\cap \wedge \partial_{\varphi} \mathbf{y}_\cap \Vert
$$
for $\mathbf{x},\mathbf{d},\mathbf{s}$ given. First, one can compute
$$
\partial_{\varphi} \mathbf{y}_\cap  = R_2 R_1 
\left(
r_\cap  
 \left(
\begin{array}{c}
-\sin \omega_1 \sin \varphi \\ 
\sin \omega_1 \cos \varphi \\ 
0
\end{array}\right)
+
\partial_\varphi r_\cap  
 \left(
\begin{array}{c}
\sin \omega_1 \cos \varphi \\ 
\sin \omega_1 \sin \varphi \\ 
\cos \omega_1
\end{array}\right)
\right)
$$
and 
$$
\partial_{\omega_1} \mathbf{y}_\cap  = R_2 R_1 
\left(
r_\cap  
 \left(
\begin{array}{c}
\cos \omega_1 \cos \varphi \\ 
\cos \omega_1 \sin \varphi \\ 
-\sin \omega_1
\end{array}\right)
+
\partial_{\omega_1} r_\cap  
 \left(
\begin{array}{c}
\sin \omega_1 \cos \varphi \\ 
\sin \omega_1 \sin \varphi \\ 
\cos \omega_1
\end{array}\right)
\right).
$$
This leads to
\begin{eqnarray*}
\partial_\varphi \mathbf{y}_\cap \wedge \partial_{\omega_1} \mathbf{y}_\cap 
&=&  R_2 R_1 
\left(
r_\cap^2 
 \left(
\begin{array}{c}
\sin^2 \omega_1 \cos \varphi \\ 
\sin^2 \omega_1 \sin \varphi \\ 
\cos \omega_1 \sin \omega_1
\end{array}\right)
+ r_\cap \partial_\varphi r_\cap 
 \left(
\begin{array}{c}
\sin \varphi \\ -\cos\varphi \\ 0
\end{array}\right) \right. \\
&+& \left.  r_\cap \partial_{\omega_1} r_\cap 
 \left(
\begin{array}{c}
-\cos \omega_1 \sin \omega_1 \cos \varphi\\
-\cos \omega_1 \sin \omega_1 \sin \varphi \\ 
\sin^2 \omega_1
\end{array}
\right)
\right).
\end{eqnarray*}
Since the rotation matrices do not change the norm, they can be ignored in the computation of the norm. After some computations, one gets
$$
\Vert \partial_{\omega_1} \mathbf{y}_\cap \wedge \partial_{\varphi} \mathbf{y}_\cap \Vert = r_\cap  \sqrt{\sin^2\omega_1 (r_\cap^2 +  \left( \partial_{\omega_1} r_\cap  \right)^2) + \left( \partial_{\varphi} r_\cap )  \right)^2}
$$
where
\begin{eqnarray*}
\partial_{\omega_1} r_\cap &=& (z_{\cap})_{\omega_1} \left(1+  \cot \omega_2 \frac{z_\cap}{\sqrt{1-z_\cap^2}} \right) - \frac{\sin \omega_1}{\sin^3 \omega_2} \sqrt{1-z_\cap^2} \\
\partial_{\varphi} r_\cap &=& (z_{\cap})_{\varphi} \left(1 + \cot \omega_2  \frac{z_\cap}{\sqrt{1-z_\cap^2}} \right)
\end{eqnarray*}
in which 
\begin{eqnarray*}
(z_{\cap})_{\omega_1} &=& R_1(3,1) \cos \omega_1 \cos \varphi
+ R_1(3,2) \cos \omega_1 \sin \varphi - R_1(3,3) \sin  \omega_1\\
(z_{\cap})_{\varphi} &=&  R_1(3,2) \sin \omega_1 \cos \varphi - R_1(3,1) \sin \omega_1 \sin \varphi .
\end{eqnarray*}
This ends the proof.

\subsection*{Proof of Lemma \ref{theo:1to1}}

First, we discard the degenerate case of the spindle torus which occurs when $\omega = 0$ (there is no scattering event, only primary radiation) and corresponds to the line $\{\mathbf{s}+t(\mathbf{d}-\mathbf{s}), t\in \mathbb{R}\}$. 

The parameter representation of the spindle torus is given by
$$
\mathfrak{T}(\omega,\mathbf{d},\mathbf{s}) = 
\left\{
\mathbf{x} = \mathbf{s} + \Vert \mathbf{d}-\mathbf{s}\Vert \frac{\sin(\omega - \alpha)}{\sin \omega} R_2
\left(
\hspace{-4pt}
\begin{array}{c}
\sin \alpha \cos \beta \\ 
\sin \alpha \sin \beta \\ 
\cos \alpha 
\end{array}
\hspace{-4pt}
\right) 
: \alpha \in [0,\omega], \beta \in [0,2\pi)
\right\}
$$
with $p = \cot \omega$ and $R_2$ the rotation matrix which maps 
$
e_z = (0,0,1)^T
$
into
$
\frac{\mathbf{d} - \mathbf{s}}{\Vert \mathbf{d} - \mathbf{s} \Vert}
$. For $\mathbf{s}$ and $\mathbf{d}$ fixed, it is clear that
$$
 R_2
\left(
\hspace{-4pt}
\begin{array}{c}
\sin \alpha \cos \beta \\ 
\sin \alpha \sin \beta \\ 
\cos \alpha 
\end{array}
\hspace{-4pt}
\right) 
$$
is one-to-one with the unit sphere. This is why each $\mathbf{x} \in \mathbb{R}^3$ corresponds to only one pair $(\alpha,\beta)$. Now, since $\frac{\sin(\omega - \alpha)}{\sin \omega}$ is defined for $\omega > \alpha$, the norm of the vector $\mathbf{x} - \mathbf{s}$ is a bijective function on $\omega$. 

Therefore, the spindle torus parametrized by $(\omega,\alpha,\beta)$ is one-to-one with $\mathbb{R}^3\setminus\{\mathbf{s}+t(\mathbf{d}-\mathbf{s}), t\in \mathbb{R}\}$.

\subsection*{Proof of Theorem \ref{theo:continuity_T}}
From \cite{RH_2018}, we know that given a spherical parametrization $\mathbf{x}(p,\eta_1,\eta_2)$ of the torus (lemon or apple part) for $p,\mathbf{d},\mathbf{s}$ fixed
$$ 
\mathcal{T} (n_e) (p,\mathbf{d},\mathbf{s}) = \int_0^{2\pi} \int_0^\omega w_1(\mathbf{x}(p,\eta_1,\eta_2),p,\mathbf{d},\mathbf{s}) n_e(\mathbf{x}(p,\eta_1,\eta_2)) J(p,\eta_1,\eta_2) \mathrm{d}\eta_1 \mathrm{d}\eta_2
$$
with $p = \cot \omega$ and $J$ the appropriate Jacobian given in \cite{RH_2018}. Using the Cauchy-Schwarz inequality, one gets
\begin{eqnarray*}
&\vert \mathcal{T}(f_n) (p,\mathbf{d},\mathbf{s}) - \mathcal{L}_1 (f,f_n)  (p,\mathbf{d},\mathbf{s}) \vert^2 \\
&\leq
\int_0^{2\pi} \int_0^\omega (w_1(\mathbf{x}(p,\eta_1,\eta_2),p,\mathbf{d},\mathbf{s}))^2 \chi_\Omega(\eta_1,\eta_2) J(p,\eta_1,\eta_2) \mathrm{d}\eta_1 \mathrm{d}\eta_2 \\
\cdot
&\int_0^{2\pi} \int_0^\omega |f_n(\mathbf{x}(p,\eta_1,\eta_2))-f(\mathbf{x}(p,\eta_1,\eta_2)) |^2 J(p,\eta_1,\eta_2) \mathrm{d}\eta_1 \mathrm{d}\eta_2 
\end{eqnarray*}
in which $\chi_\Omega$ stands for a smooth cut-off well-defined since $f,n_e$ are compactly supported in $\Omega$ an open subset of $\mathbb{R}^3$. Taking now the $L_2$ norm yields
$$
\Vert  \mathcal{T}(f_n)- \mathcal{L}_1 (f,f_n) \Vert_{L_2} \leq c \int_\mathbb{R} \int_0^{2\pi} \int_0^\omega  |f_n(\mathbf{x}(p,\eta_1,\eta_2))-f(\mathbf{x}(p,\eta_1,\eta_2)) |^2 J(p,\eta_1,\eta_2) \mathrm{d}\eta_1 \mathrm{d}\eta_2 \mathrm{d}p
$$
with 
$$
c = \int_\mathbb{R} \int_0^{2\pi} \int_0^\omega (w_1(\mathbf{x}(p,\eta_1,\eta_2),p,\mathbf{d},\mathbf{s}))^2 \chi_\Omega(\eta_1,\eta_2) J(p,\eta_1,\eta_2) \mathrm{d}\eta_1 \mathrm{d}\eta_2 \mathrm{d}p
$$
which is well-defined as the integrand is a compactly supported smooth
function. Due to Lemma \ref{theo:1to1} and since the discarded line passing through $\mathbf{s}$ and $\mathbf{d}$ is negligible regarding the Lebesgue measure, one can finally apply the mapping $(p,\eta_1,\eta_2) \mapsto \mathbf{y}$ and gets
$$
\Vert  \mathcal{T}(f_n)- \mathcal{L}_1 (f,f_n)  \Vert_{L_2}^2 \leq c \int_{\Omega} |f(\mathbf{y})-f_n(\mathbf{y})|^2 \mathrm{d} \mathbf{y} = c \ \Vert f - f_n \Vert_{L_2}^2.
$$
Taking the limit $n \to \infty$ ends the proof.

\subsection*{Proof of Lemma \ref{lemma:immersion_sphere}}

We start from
$$
\phi(\mathbf{x},\mathbf{d},\mathbf{s}) = \frac{\kappa(\mathbf{x}) - \rho(\mathbf{x}) }{\sqrt{1- \kappa^2(\mathbf{x})}} \quad \mbox{with} \quad \kappa(\mathbf{x}) = \frac{\mathbf{x}-\mathbf{s}}{\Vert \mathbf{x}-\mathbf{s}\Vert} \cdot \frac{\mathbf{d}-\mathbf{s}}{\Vert \mathbf{d}-\mathbf{s}\Vert}, \ \rho(\mathbf{x}) = \frac{\Vert \mathbf{x}-\mathbf{s}\Vert}{ \Vert \mathbf{d}-\mathbf{s}\Vert}
$$ 
and
\begin{eqnarray*}
\nabla_{\mathbf{x}}\phi(\mathbf{x},\mathbf{d},\mathbf{s})  
&=& \frac{(1- \rho(\mathbf{x}) \kappa(\mathbf{x}) )}{\Vert \mathbf{x}-\mathbf{s}\Vert (1-\kappa^2(\mathbf{x}))^{3/2}} \frac{\mathbf{d}-\mathbf{s}}{\Vert \mathbf{d}-\mathbf{s} \Vert} \nonumber\\
&-& \frac{1}{\Vert \mathbf{x}-\mathbf{s}\Vert \sqrt{1-\kappa^2(\mathbf{x})}} 
\left( \rho(\mathbf{x}) + \kappa(\mathbf{x}) \frac{1-\rho(\mathbf{x})\kappa(\mathbf{x})}{1-\kappa^2(\mathbf{x})}
\right) \frac{\mathbf{x}-\mathbf{s}}{\Vert\mathbf{x}-\mathbf{s}\Vert}.
\end{eqnarray*}
In order to use the invariances of $\kappa(\mathbf{x})$, we consider a rotation $R_\mathbf{x}$ which maps $\frac{\mathbf{x}-\mathbf{s}}{\Vert\mathbf{x}-\mathbf{s}\Vert}$ to $(0,0,1)^T$. After this rotation, we get for $\mathbf{x}$ fixed
$$
\mathbf{x}-\mathbf{s} = 
\left(
\begin{array}{c}0\\0\\r\end{array}
\right)
\quad \text{and} \quad
\mathbf{d}-\mathbf{s} 
= t(\theta(\bar{\alpha},\bar{\beta})) 
\left(\begin{array}{c}
\sin \bar{\alpha} \cos \bar{\beta}\\ 
\sin \bar{\alpha} \sin \bar{\beta}\\ 
\cos \bar{\alpha}
\end{array} \right) 
$$
in which $\bar{\alpha}$ and $\bar{\beta}$ stands for the elevation and azimuth angles of $\mathbf{d}-\mathbf{s}$ in the new system of spherical coordinates oriented by $\mathbf{x}$. Since $\mathbf{x}$ is fixed, we omitted their dependency with respect to $\mathbf{x}$. $\theta(\bar{\alpha},\bar{\beta})$ expresses the elevation angle of $\mathbf{d}-\mathbf{s}$ regarding $\mathbf{x}-\mathbf{s}$ in the new system of coordinates oriented by $\mathbf{x}-\mathbf{s}$ and parametrized by $\bar{\alpha}$ and $\bar{\beta}$.\\[1em]
For $\mathbf{x}$ given and after the rotation $R_\mathbf{x}$, the phase and its gradient simplify to 
$$
\bar{\phi}(\bar{\alpha},\bar{\beta})  = \frac{\cos(\bar{\alpha}) - r/t(\theta(\bar{\alpha},\bar{\beta}))}{\sin \bar{\alpha}}
\quad \text{and} \quad
\bar{\phi}_\mathbf{x} (\bar{\alpha},\bar{\beta}) 
= \frac{1}{t \sin^2 \bar{\alpha}}  \left(\begin{array}{c}
\left(  \cos \bar{\alpha} - \frac{t}{r} \right)  \cos \bar{\beta}\\ 
\left(  \cos \bar{\alpha} - \frac{t}{r} \right)  \sin \bar{\beta}\\ 
\sin \bar{\alpha}
\end{array} \right) .
$$ 
We must now compute the derivative of $\bar{\phi}_\mathbf{x}(\bar{\alpha},\bar{\beta}) $ regarding $\bar{\alpha}$ and $\bar{\beta}$. After some computations, one gets 
$$
\partial_{\bar{\alpha}} \bar{\phi}_\mathbf{x}(\bar{\alpha},\bar{\beta})
= \frac{1}{t \sin^3 \bar{\alpha}} \left(\begin{array}{c}
\left[- \frac{2 t\cos \bar{\alpha}}{r} + 1 + \cos^2 \bar{\alpha} + \frac{t_{\bar{\alpha}}}{t} \cos\bar{\alpha} \sin\bar{\alpha} \right] \cos \bar{\beta}\\ 
\left[- \frac{2 t\cos \bar{\alpha}}{r} + 1 + \cos^2 \bar{\alpha} + \frac{t_{\bar{\alpha}}}{t} \cos\bar{\alpha} \sin\bar{\alpha} \right] \sin \bar{\beta}\\ 
\cos \bar{\alpha} \sin \bar{\alpha} + \frac{t_{\bar{\alpha}}}{t} \sin^2 \bar{\alpha}
\end{array} \right)
$$
and 
$$
\partial_{\bar{\beta}} \bar{\phi}_\mathbf{x}(\bar{\alpha},\bar{\beta})=  \frac{1}{t \sin^2 \bar{\alpha}} \left[ \left( \frac{t}{r} - \cos \bar{\alpha} \right) \left(\begin{array}{c}
- \sin \bar{\beta}\\ 
 \cos \bar{\beta}\\ 
0
\end{array} \right)
+ \frac{t_{\bar{\beta}}}{t}
\left(\begin{array}{c}
\cos\bar{\alpha} \cos \bar{\beta}\\ 
\cos\bar{\alpha} \sin \bar{\beta}\\ 
\sin \bar{\alpha}
\end{array} \right)
\right].
$$
Finally, defining the determinant of the Jacobian matrix after rotation $R_\mathbf{x}$ and for $\mathbf{x}$ given by 
$$
\bar{h}(\bar{\alpha},\bar{\beta}):=\mathrm{det}(\bar{\phi}_\mathbf{x}, \partial_{\bar{\alpha}} \bar{\phi}_\mathbf{x}, \partial_{\bar{\beta}} \bar{\phi}_\mathbf{x})(\bar{\alpha},\bar{\beta}), \quad \text{for }  (\bar{\alpha},\bar{\beta}) \in [0,\pi] \times [0,2\pi],
$$ 
it yields 
$$
\bar{h}(\bar{\alpha},\bar{\beta}) =  
\frac{r \cos\bar{\alpha} - t}{t^3 \sin^6 \bar{\alpha}}(r+t_{\bar{\alpha}} \sin \bar{\alpha} - t\cos\bar{\alpha}).
$$ 
In the computations, the part with $t_{\bar{\beta}}$ cancels out. Since the rotation $R_\mathbf{x}$ does not change the linear independence of $\{\nabla_\mathbf{x} \phi, \partial_\alpha \nabla_\mathbf{x} \phi, \partial_\beta \nabla_\mathbf{x} \phi\}$, $h(\mathbf{x},\mathbf{d})$ vanishes thus only if 
\begin{enumerate}
\item $r\cos\bar{\alpha} =  t$ which corresponds to the condition in eq. (\ref{eq:condition_immersion}), or
\item $t_{\bar{\alpha}} \sin \bar{\alpha} = t \cos \bar{\alpha} - r$ which  is our condition on $\mathbb{D}$. Indeed $\bar{\alpha} = \arccos(\kappa(\mathbf{x}))$ thus $t_{\bar{\alpha}}$ becomes after the rotation $R_\mathbf{x}^{-1}$ becomes 
$$
- \frac{\partial_\alpha \kappa(\mathbf{x})}{\sqrt{1-\kappa^2(\mathbf{x})}} \partial_\alpha t(\alpha).
$$
\end{enumerate}

\bibliographystyle{siamplain}
\bibliography{references}

\end{document}